\DeclareMathOperator{\supp}{supp}
\DeclareMathOperator*{\diam}{diam}
\DeclareMathOperator{\diag}{diag}
\DeclareMathOperator{\lip}{Lip}
\DeclareMathOperator{\dist}{dist}
\theoremstyle{plain}
\newtheorem{theorem}{Theorem}[section]
\newtheorem{proposition}{Proposition}[section]
\newtheorem{corollary}{Corollary}[section]
\newtheorem{lemma}{Lemma}[section]
\theoremstyle{remark}
\newtheorem{remark}{Remark}[section]
\newtheorem{examples}{Examples}[section]
\newtheorem{assumption}{Assumption}[section]
\begin{document}

\title{Non-local boundary energy forms for quasidiscs: Codimension gap and approximation}

\author{Simone Creo$^1$}
\address{$^1$Dipartimento di Scienze di Base e Applicate per l'Ingegneria,
Sapienza Universit\`{a} di Roma,
Via A. Scarpa 16, 00161 Roma, Italy}
\email{simone.creo@uniroma1.it}

\author{Michael Hinz$^2$}
\address{$^2$Fakult\"at f\"ur Mathematik, Universit\"at Bielefeld, Postfach 100131, 33501 Bielefeld, Germany}
\email{mhinz@math.uni-bielefeld.de}

\author{Maria Rosaria Lancia$^3$}

\address{$^3$Dipartimento di Scienze di Base e Applicate per l'Ingegneria,
Sapienza Universit\`{a} di Roma,
Via A. Scarpa 16, 00161 Roma, Italy}
\email{maria.lancia@sbai.uniroma1.it}

\begin{abstract}
We consider non-local energy forms of fractional Laplace type on quasicircles and prove that they can be approximated by similar energy forms on polygonal curves. The approximation is in terms of generalized Mosco convergence along a sequence of varying Hilbert spaces. The domains of the energy forms are the natural trace spaces, and we focus on the case of quasicircles of Hausdorff dimension greater than one. The jump in Hausdorff dimension results in a mismatch of fractional orders, which we compensate by a suitable choice of kernels. We provide approximations of quasidiscs by polygonal $(\varepsilon,\infty)$-domains with common parameter $\varepsilon>0$ and show convergence results for superpositions of Dirichlet integrals and non-local boundary energy forms. 
\tableofcontents
\end{abstract}

\keywords{Non-local operators, Fractals, Quasicircles, Trace spaces, Mosco convergence} 
\subjclass[2020]{28A80, 30C62, 31C25, 31E05, 35J25, 35K20, 47A07, 47G20, 49J52}

\maketitle

\section{Introduction}

We investigate planar domains with fractal boundaries and non-local boundary energy forms of fractional Laplace type, explicitly defined as double integrals with hypersingular kernels. We prove that such non-local energy forms on a fractal boundary can be approximated by similar non-local energy forms on approximating polygonal curves. Since polygonal curves are one-dimensional, but the fractal boundary typically has Hausdorff dimension greater than one, the approximating energy forms have to be rescaled and modified appropriately to achieve convergence. Our aim is to provide quantitative information on how such rescalings and modifications can be chosen; this is motivated by recent studies of boundary value problems in \cite{CefaloCreoLanciaVernole19, CreoLanciaNazarov20, CreoLanciaNazarovVernole19}. A simple but non-trivial effect, which we call a \enquote{codimension gap}, obstructs a naive solution. We overcome this effect by an averaging technique and the choice of suitable kernels for the approximating non-local forms; the method might be seen as a non-local variant of singular fractal homogenization, \cite{MoscoVivaldi07}. In addition, we consider superpositions of energy forms which are the sum of the Dirichlet integral on the domain plus the non-local boundary energy form. We show that such forms are limits of similar energy forms on approximating polygonal domains. As applications we obtain convergence results for solutions to associated elliptic and parabolic problems with Wentzell boundary conditions.

Non-local energy forms on boundaries appear naturally in connection with trace spaces. Recall that if $B=B(0,1)\subset \mathbb{R}^2$ is the unit disk, then the unique weak solution $u\in H^1(B)$ to the Dirichlet problem for the Laplacian on $B$ with boundary values $\varphi\in H^{1/2}(\partial B)$ on the unit circle $\partial B$ is the Poisson integral $u=\mathbb{H}\varphi$ of $\varphi$. The boundary energy form $\mathcal{Q}^1_{\partial B}(\varphi):=\int_B |\nabla \mathbb{H}\varphi|^2dx$, $\varphi\in H^{1/2}(\partial B)$, on $L^2(\partial B)$ can explicitly be written as Douglas' integral, \cite{Douglas31, Rado30},
\[\mathcal{Q}^1_{\partial B}(\varphi)=\frac{1}{8\pi}\int_0^{2\pi}\int_0^{2\pi}\frac{(\varphi(\theta)-\varphi(\theta'))^2}{\sin^2((\theta-\theta')/2)}d\theta' d\theta \asymp \int_{\partial B}\int_{\partial B}\frac{(\varphi(x)-\varphi(y))^2}{|x-y|^2}dxdy,\quad \varphi\in H^{1/2}(\partial B),\]
see \cite[Section 1.2]{FOT94}. Here $\asymp$ means comparability of quadratic forms. The infinitesimal generator of the boundary energy form $\mathcal{Q}^1_{\partial B}$ is the Dirichlet-to-Neumann operator, \cite{ArendtMazzeo07, ArendtterElst11, CaffarelliSilvestre07}, which in the case of more general domains is a particularly interesting non-local operator of fractional Laplace type related to inverse problems and shape optimization.

In the context of dynamical boundary conditions non-local boundary energy forms describe a jump-type transport along the boundary. The most classical dynamical boundary conditions are local Wentzell conditions involving a diffusion term; they were introduced by Feller, \cite{Feller52, Feller54}, partially motivated by questions in genetics, \cite{Feller51}. Wentzell, \cite{Ventsell59}, then studied all possible boundary conditions that could occur for the generator of a diffusion process in a multidimensional domain. Many authors subsequently investigated dynamical boundary conditions from analytical and probabilistic point of view, see for instance \cite{ArendtKunkelKunze16, BonyCourregePriouret68, Galakhov01, LanciaVernole14, NicaiseLiMazzucato17, Skubachevskii97, Taira14, VogtVoigt03} respectively \cite{GrothausVosshall17, Ikeda61, IkedaWatanabe89, Ishikawa89, SatoUeno65}. In applications Wentzell boundary conditions appear in connection with water waves, \cite{Korman83}, and hydraulic fracturing, \cite{CannonMeyer71}; they are also closely related to effective boundary conditions in ferromagnetics, \cite{HaddarJoly01}. Boundary conditions of Robin-Wentzell type, \cite{Engel03, Favini02, MugnoloRomanelli06, Nittka11, Warma13}, are known to be relevant in acoustics, \cite{GalRuizGoldsteinGoldstein03, Mugnolo06}. Certain non-local boundary conditions have applications in plasma physics, \cite{Bitsadze69, Gurevich12}. Non-local operators of integro-differential type appeared already in early references as summands or special cases, \cite{Ikeda61, Ishikawa89, SatoUeno65}, and received increasing attention in more recent papers \cite{CefaloCreoLanciaVernole19, CreoLanciaNazarovVernole19, GalWarma16a, GalWarma16b, Gurevich12, LanciaVelezVernole16}, where non-local Robin and Wentzell boundary conditions have been considered.

Here we consider purely non-local boundary conditions of, roughly speaking, fractional Laplace type. For simplicity we 
concentrate on this situation and do not add any diffusion term.

We are specifically interested in fractal boundaries. Most of the existing literature on dynamical boundary conditions requires at least Lipschitz regularity, so it seems desirable to provide generalizations to non-Lipschitz domains. Generalized Wentzell conditions appear in transmission problems, \cite{FilocheSapoval00, Sapoval94}, at highly conductive layers, \cite{Lancia02, Mosco94, PhamHuy74}, and numerical experiments support the conjecture that enhanced heat draining may be achieved if the layer is fractal, \cite{CefaloCreoLanciaRodriguez23, LanciaCefaloDellAcqua12}.
Problems with local Wentzell conditions on Koch snowflake boundaries were studied in \cite{HLVT18, LanciaVernole14}, similar problems involving non-local boundary terms in \cite{LanciaVelezVernole16}. Fractal boundaries are idealized limit objects, and only suitable non-fractal shapes approximating them will be computationally tractable. This makes approximations highly relevant, see \cite{HinzMeinert20, HinzMeinert22} for the case of fractal state spaces. In \cite{LanciaVernole14} the convergence of problems with local Wentzell boundary conditions on polygonal pre-fractal domains to limit Wentzell problems on Koch snowflake domains was proved. The papers \cite{CefaloCreoLanciaVernole19, CreoLanciaNazarov20, CreoLanciaNazarovVernole19} contain regularity results and numerical approximations for problems with Wentzell boundary conditions having an additional non-local term on polygonal boundaries. This motivates the question how to provide pre-fractal-to-fractal approximations similar to \cite{LanciaVernole14} for such boundary conditions; there adapted Sobolev spaces on the domain had to be used, \cite[formulas (3.6) and (3.14)]{LanciaVernole14}. For Wentzell problems involving non-local boundary energy forms of Douglas' integral type we can use standard Sobolev spaces on the domain.

Suppose that $\Gamma\subset \mathbb{R}^2$ is the Koch snowflake curve and $\mu$ is the natural arc-wise self-similar Borel probability measure on $\Gamma$. The quadratic forms
\[\mathcal{Q}^1(\varphi)=\int_\Gamma\int_\Gamma\frac{(\varphi(x)-\varphi(y))^2}{|x-y|^d\mu(B(x,|x-y|))}\mu(dx)\mu(dy)
 \asymp \int_\Gamma\int_\Gamma\frac{(\varphi(x)-\varphi(y))^2}{|x-y|^{2d}}\mu(dx)\mu(dy) \]
are comparable to the analog of Douglas' integral for the Koch snowflake domain $\Omega$ enclosed by $\Gamma$. Here $d>1$ is the Hausdorff dimension of $\Gamma$ and the functions $\varphi$ are recruited from the trace space
$B^{2,2}_1(\Gamma)$ of $H^1(\mathbb{R}^2)$ on $\Gamma$, \cite{Jonsson94}. A study of such quadratic forms was provided in \cite{FarkasJacob2001}. If $\Gamma_n$ are the natural pre-fractal polygonal boundaries enclosing the domains $\Omega_n$ and approximating $\Gamma$ as in \cite{LanciaVernole14}, then they all have finite and nonzero one-dimensional Hausdorff measure $\mathcal{H}^1$ and the rescaled measures $\mu_n:=\frac{3^n}{4^n}\mathcal{H}^1(\cdot\cap \Gamma_n)$ converge weakly to $\mu$. For smooth functions $u$ on $\mathbb{R}^2$ we can observe that 
\[ \int_{\Gamma_n}\int_{\Gamma_n}\frac{(u(\xi)-u(\eta))^2}{|\xi-\eta|^{d'}\mu_n(B(\xi,|\xi-\eta|))}\mu_n(d\xi)\mu_n(d\eta)\to \int_\Gamma\int_\Gamma\frac{(u(x)-u(y))^2}{|x-y|^{d'}\mu(B(x,|x-y|))}\mu(dx)\mu(dy)\]
as $n\to \infty$. If the approximating boundary energy forms are supposed to have the natural trace spaces $H^{1/2}(\Gamma_n)$ of $H^1(\Omega_n)$ as domains, then $d'=1$ must be chosen and the desirable $d'=d>1$ is out of bounds. The parameter $d'$ should be viewed as a
smoothness parameter; this can be seen from well-known results for non-local Dirichlet forms on metric measure spaces, \cite{ChenKumagai08}. The smaller codimension $2-d$ of the fractal curve $\Gamma$ in comparison to $2-1$ in the case of the polygons $\Gamma_n$ results in a higher smoothness parameter for the trace spaces, \cite{Jonsson94, JonssonWallin84}, and this is what we mean by \enquote{codimension gap}. Here our workaround is to approximate $\mathcal{Q}^1$ by boundary energy forms 
\begin{multline}
\mathcal{Q}^1_n(\varphi)=\frac{3^n}{4^n}\int_{\Gamma_n}\int_{\Gamma_n\cap B(\xi,C\: 3^{-n})^c}\frac{(\varphi(\xi)-\varphi(\eta))^2}{|\xi-\eta|^{1+d}}\mathcal{H}^1(d\xi)\mathcal{H}^1(d\eta)\notag\\
+\frac{3^n}{4^n}\int_{\Gamma_n}\int_{\Gamma_n\cap B(\xi,C\: 3^{-n})}\frac{(\varphi(\xi)-\varphi(\eta))^2}{|\xi-\eta|^{2}}\mathcal{H}^1(d\xi)\mathcal{H}^1(d\eta);
\end{multline}
where $C>0$ is a fixed constant. For large interaction ranges the desired exponent $d'=d$ appears in the kernel, for short range interactions we keep $d'=1$. In the limit for $n\to \infty$ the first summand takes over, the second vanishes, and we recover $\mathcal{Q}^1$.

Instead of $\mathcal{Q}^1$ and $\mathcal{Q}^1_n$, defined in terms of double integrals with explicit kernels, one could of course study the exact analogs of Douglas' integral, abstractly defined using Dirichlet form theory, \cite{BH91, FOT94}. Then the convergence can actually be proved much easier. Such an abstract point of view is very suitable for compactness and stability results, \cite{HR-PT21, HR-PT23}, but in general it cannot be expected to give much explicit information. Numerical investigations, on the other hand, require a high level of detail with truly explicit formulas, as for instance achieved in \cite{Achdou06, LanciaCefaloDellAcqua12, NicaiseLiMazzucato17} for other mathematical questions. Our results here are a middle ground, maybe somewhat similar to \cite{Mosco13}: We study boundary energy forms comparable to the exact analogs of Douglas' integral, but explicitly defined as double integrals. This reveals appropriate rescalings and the asymptotics of constants. In the special case of the Koch snowflake our results are fully explicit. 

In Theorem \ref{T:Moscoboundary} we prove a result that contains the Mosco convergence, \cite{Mosco94}, of the forms $\mathcal{Q}^1_n$ to $\mathcal{Q}^1$ as a very special case. Since the boundaries and measures give rise to varying Hilbert spaces, we formulate it in the framework of \cite{KuwaeShioya03}, see \cite[Definition 2.11]{KuwaeShioya03} or Appendix \ref{S:Notions}. Related results can be found in \cite{HinzTeplyaev15, MoscoVivaldi07, PostSimmer21}. Our proof of Theorem \ref{T:Moscoboundary} is both a non-trivial extension and a non-uniform refinement of an averaging method used in \cite{Hinz09}. There the method was applied to $d$-sets. Here the curve $\Gamma$ is an arbitrary quasicircle, which may have parts of different Hausdorff dimensions. In \cite[Theorem 1.2]{Rohde01} it was shown how to construct measures with refined doubling conditions for any quasicircle, see \cite{Wu98} for related arguments. Because these refined doubling conditions are just what a suitable trace theorem requires, \cite[Theorem 1]{Jonsson94}, we can formulate Theorem \ref{T:Moscoboundary} for non-local energy forms on more general quasicircles. In particular, $\Gamma$ does not have to be a Koch snowflake curve. The proof of Theorem \ref{T:Moscoboundary} does not make a direct use of quasiconformal parametrizations, but of metric consequences, \cite{Ahlfors63, Lehto87, MartioSarvas79, Vaisala88}. The approximation of energy forms on closed quasidiscs, \cite{Gehring82},  by similar energy forms on closed polygonal domains is verified in Theorem \ref{T:MoscoWentzell}.  Intermediate results are Theorem \ref{T:adhoc}, where we provide a possible construction of approximating polygonal $(\varepsilon,\infty)$-uniform domains with common parameter $\varepsilon>0$, and Theorem \ref{T:Mosco}, where we show the Mosco convergence of Dirichlet integrals on these domains. The convergence of solutions to elliptic and parabolic problems with non-local boundary conditions on approximating polygonal domains to corresponding solutions on limit quasidiscs is observed in Theorems \ref{T:ellipticstability} and \ref{T:parabolicstability} respectively.

We proceed as follows: In Section \ref{S:Quasi} we state and discuss our standing assumptions. In Section \ref{S:Polygons} we investigate sequences of approximating polygonal curves and measures defined by averaging. We introduce non-local energy forms on quasicircles and polygonal curves in Section \ref{S:Non-local} and prove their Mosco-convergence in Section \ref{S:Moscoboundary}. Section \ref{S:Disks} contains an approximation scheme for quasidiscs in terms of polygonal $(\varepsilon,\infty)$-uniform domains. In Sections \ref{S:Mosco}, \ref{S:MoscoWentzell} and \ref{S:Apps} we provide results on the Mosco-convergence of Dirichlet integrals and superpositions and show brief applications to elliptic and parabolic equations.

By $\mathcal{L}^2$ and  $\mathcal{H}^s$ we denote the $2$-dimensional Lebesgue measure and the $s$-dimensional Hausdorff measure on $\mathbb{R}^2$, respectively. Given a Borel measure $\mu$ on $\mathbb{R}^2$, a Borel set $E\subset \mathbb{R}^2$ with $\mu(E)>0$ and a Borel function $f$ that is $\mu$-integrable over $E$, we use the notation $\fint_E f\:d\mu:=\frac{1}{\mu(E)}\int_E f\:d\mu$.

\section{Quasicircles and measures}\label{S:Quasi}

Recall that a \emph{quasicircle} $\Gamma\subset \mathbb{R}^2$ is the image of a circle under a quasiconformal map of the plane $\mathbb{R}^2$ onto itself, \cite{Ahlfors63, Gehring82, Jones81, Lehto87}.
Our standing assumptions in this article are that $\Gamma\subset \mathbb{R}^2$ is a quasicircle, that
\begin{equation}\label{E:sd}
1\leq d\leq s<2
\end{equation}
and $c_{\mu}>1$ are constants and that $\mu$ is a Borel probability measure on $\mathbb{R}^2$ with $\supp\mu=\Gamma$ satisfying
\begin{equation}\label{E:refineddoubling}
c_{\mu}^{-1}k^d\mu(B(x,r))\leq \mu(B(x,kr)\leq c_{\mu} k^s\mu(B(x,r)),\quad x\in \Gamma,\ r>0,\ k>1,\ kr\leq \diam\Gamma,
\end{equation}
and
\begin{equation}\label{E:lowerboundmu}
\mu(B(x,1))\geq c_{\mu}^{-1},\quad x\in \Gamma.
\end{equation}

\begin{examples}\mbox{}\label{Ex:measure}
\begin{enumerate}
\item[(i)] If $\Gamma$ is rectifiable, then (\ref{E:refineddoubling}) and (\ref{E:lowerboundmu}) hold for $\mu=\mathcal{H}^1(\Gamma\cap \cdot)/\mathcal{H}^1(\Gamma)$ with $s=d=1$ in (\ref{E:sd}).
\item[(ii)] If $\Gamma$ is the classical arc-wise self-similar Koch snowflake curve with contraction ratio $1/3$, \cite{Falconer90, Mattila95}, then the arc-wise self-similar probability measure $\mu$ satisfies (\ref{E:refineddoubling}) with $d=s=\log 4/\log 3$. For variants with contraction ratio $1/4\leq p< 1/2$ the same is true with $d=s=\log 4/(-\log p)$. If $\Gamma$ is a homogeneous scale irregular snowflake-like curve \cite{Capitanelli10, Mosco02, Rohde01} and a law of large numbers holds for the sequence of scaling factors, then there is a Borel probability measure $\mu$ satisfying (\ref{E:refineddoubling}) suitable $1\leq d=s< 2$, \cite[p. 1224]{Capitanelli10}. Also (\ref{E:lowerboundmu}) holds in all these cases.
\item[(iii)] In \cite[Theorem 1.2 and its proof]{Rohde01} it was shown that for any quasicircle $\Gamma$ one can construct a Borel probability measure $\mu$ on $\mathbb{R}^2$ with $\supp\mu=\Gamma$ such that (\ref{E:refineddoubling}) and 
(\ref{E:lowerboundmu}) hold with $d=1$ and some $1\leq s<2$ in (\ref{E:sd}). As pointed out,  \cite[p. 645]{Rohde01}, the  measures $\mu$ constructed there are generally not canonical in any way. A particularly interesting class of quasicircles are the general snowflake-like curves in \cite{Rohde01}; any quasicircle is the image of such a curve under a bi-Lipschitz map of the plane onto itself, \cite[Theorem 1.1]{Rohde01}.
\end{enumerate}
\end{examples}

\begin{remark}\label{R:doubling}\mbox{}
\begin{enumerate}
\item[(i)] Property (\ref{E:refineddoubling}) implies that $\mu$ is volume doubling in the usual sense, more precisely,
\begin{equation}\label{E:mudoubling}
\mu(B(x,2r))\leq c_{\mu,D}\mu(B(x,r)),\quad x\in \Gamma,\quad 0<r\leq (\diam\Gamma)/2,
\end{equation}
with a constant $c_{\mu,D}>1$.
\item[(ii)] Property (\ref{E:refineddoubling}) also implies that  
\begin{equation}\label{E:uplowreg}
c^{-1}r^s\leq \mu(B(x,r))\leq c\:r^d,\quad x\in \Gamma,\quad 0<r\leq \diam\Gamma,
\end{equation}
with a suitably readjusted constant $c>1$. In particular, the Hausdorff dimension of $\Gamma$ is bounded by $s$ and bounded below by $d$. 
\item[(iii)] If $\mu$ satisfies (\ref{E:uplowreg}) with $d=s$, then it is said to be \emph{$d$-regular}; in this case 
it also satisfies (\ref{E:refineddoubling}) with $d=s$. The measures in Examples \ref{Ex:measure} (i) and (ii) are $d$-regular.
\item[(iv)] The scaling properties in (\ref{E:refineddoubling}) allow the local dimension of $\Gamma$ to fluctuate 
continuously. They have been investigated by various authors, see for instance \cite{Assouad80,BylundGudayol00,Dynkin84,LukkainenSaksman98,VolbergKonyagin87}. In \cite{Jonsson94} they have been used as geometric hypotheses for trace and extension results for Besov spaces.
\end{enumerate}  
\end{remark}

\section{Polygonal approximations and measures}\label{S:Polygons}

We agree to take all subarcs $a$ of $\Gamma$ to be closed in the relative topology of $\Gamma$. We call two subarcs of $\Gamma$ \emph{disjoint} if their interiors are disjoint. To a collection of disjoint subarcs $a_n$ of $\Gamma$ that cover $\Gamma$ we refer as a \emph{partition} of $\Gamma$.

\begin{assumption}\mbox{}\label{A:basicass}
We assume that $0<p,q<1$, $M>1$, $\omega=(\omega_1,\omega_2,...)\in \{p,q\}^\mathbb{N}$ and $(\mathcal{I}_n)_{n\geq 0}$ is a sequence of finite partitions $\mathcal{I}_n=\{a_{n,j}\}_j$ of $\Gamma$ such that for all $n$ and $j$ we have 
\begin{equation}\label{E:basicass}
\omega_1\cdots \omega_n\diam\Gamma\leq \diam a_{n,j}<M\:\omega_1\cdots \omega_n \diam\Gamma.
\end{equation}
\end{assumption}

\begin{remark}
Assumption \ref{A:basicass} ensures that arcs in a single partition have about the same diameter. This metric uniformity makes the approximation method in Section \ref{S:Moscoboundary} tractable, and it seems more important for this method than a uniform local dimension.
\end{remark}

\begin{examples}\label{Ex:basicass}\mbox{}
\begin{enumerate}
\item[(i)] For homogeneous snowflake-like curves, \cite{Rohde01}, one can choose natural partitions. Given $1/4\leq p< 1/2$, consider the polygonal curve in $\mathbb{R}^2$ having four edges of length $p$ and vertices $(0,0)$, $(p,0)$, $(1/2,h_p)$, $(1-p,0)$ and $(1,0)$, where $h_p^2=p^2-(1/2-p)^2$. To any scaled copy of this polygonal curve we refer as a \emph{Koch type segment with parameter $p$}. 

Suppose that $\Gamma$ is the classical arc-wise self-similar Koch snowflake, \cite{Falconer90, LanciaVernole14}, and that it is constructed starting from an equilateral triangle with edge length one by successive and simultaneous replacement of edges by 'outward pointing' Koch type segments with parameter $3^{-n}$. At stage $n$, let $V_n$ be the vertex set of the resulting polygon with edges of length $3^{-n}$ and let $\mathcal{I}_n$ be the natural partition of $\Gamma$ into the arcs connecting two neighboring points from $V_n$. Then Assumption \ref{A:basicass} holds with $p=q=1/3$ and these very $\mathcal{I}_n$. Assumption \ref{A:basicass} is also satisfied for the obvious modification using a more general parameter $p$ as indicated.
\begin{figure}[h]
\begin{minipage}{0.45\textwidth}
\begin{center}
\scalebox{.3}{
  	\def\HEX{ \put(-18,10){{\color{black} \circle*{1.0}}}
  	\put(18,10){{\color{black} \circle*{1.0}}}
  	\put(0,-20.0){{\color{black} \circle*{1.0}}}
  		}
  	\begin{picture}(324, 360)(-162, -180)
  	\put(-175,-190)
  	{\includegraphics[width=350pt,height=380pt]{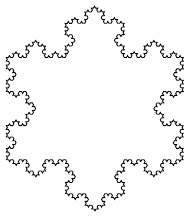}}
  	\setlength{\unitlength}{9pt}
  	\HEX
  	\end{picture}
  	}
  	\caption{Points of $V_0$ enclosing arcs of diameter $1$.}
  	\label{F:V0}  	
  	\end{center}
  	\end{minipage}
\begin{minipage}{0.45\textwidth}
\begin{center}
\scalebox{.3}{
  	\def\HEX{ 
  		\put(-12,0){{\color{black} \circle*{1.0}}}
  		\put(12,0){{\color{black} \circle*{1.0}}}
  		\put(-6,10){{\color{black} \circle*{1.0}}}
  		\put(-18,10){{\color{black} \circle*{1.0}}}
  		\put(0,20.0){{\color{black} \circle*{1.0}}}
  		\put(0,-20.0){{\color{black} \circle*{1.0}}}
  		\put(6,10){{\color{black} \circle*{1.0}}}
  		\put(18,10){{\color{black} \circle*{1.0}}}
  		\put(-6,-10){{\color{black} \circle*{1.0}}}
  		\put(-18,-10){{\color{black} \circle*{1.0}}}
  		\put(6,-10){{\color{black} \circle*{1.0}}}
  		\put(18,-10){{\color{black} \circle*{1.0}}}
  		}
  	\begin{picture}(324, 360)(-162, -180)
  	\put(-175,-190)
  	{\includegraphics[width=350pt,height=380pt]{snowflake-c.pdf}}
  	\setlength{\unitlength}{9pt}
  	\HEX
  	\end{picture}
  	}
  	\caption{Points of $V_1$ enclosing arcs of diameter $3^{-1}$.}
  	\label{F:V1}  	
  	\end{center}
  	\end{minipage}
  \end{figure} 
  
If $1/4\leq q< p<1/2$ and at each stage $n$ all edges are replaced by a Koch type segment with the same parameter $\omega_n\in \{p,q\}$, then we recover the homogeneous scale irregular snowflake-like curves in \cite{Capitanelli10, Mosco02} and \cite{Rohde01}, and with an analogous natural partition Assumption \ref{A:basicass} is satisfied.
\item[(ii)] Recall that a \emph{Jordan curve} $\Gamma\subset \mathbb{R}^2$ is the image of a circle under an injective continuous map into $\mathbb{R}^2$. For any Jordan curve $\Gamma\subset \mathbb{R}^2$ and sufficiently small $0<q<1$ one can find a sequence of finite partitions $(\mathcal{I}_n)_{n\geq 0}$ of $\Gamma$ into arcs having the same diameter, \cite{AltBeer35, Schoenberg40},
see  \cite[Section 2.5]{HerronMeyer12} for detailed comments. This suffices to satisfy Assumption \ref{A:basicass} with $p=q$ and $M=2$.
\end{enumerate}
\end{examples}

Let Assumption \ref{A:basicass} be satisfied and let the $\mathcal{I}_n=\{a_{n,j}\}_j$ be as there. 
For fixed $n$ and $j$, let $e_{n,j}\subset \mathbb{R}^2$ denote the line segment connecting the two endpoints of the arc $a_{n,j}$ and let $\Gamma_n:=\bigcup_j e_{n,j}$ denote the closed polygonal curve obtained as the union of these line segments. Since each edge $e_{n,j}$ of $\Gamma_n$ is contained in the convex hull of $a_{n,j}$, we have
\begin{equation}\label{E:convex}
\max_{x\in a_{n,j}, w\in e_{n,j}}|x-w|\leq \diam a_{n,j}.
\end{equation}

Recall that since $\Gamma$ is a quasicircle, it satisfies the \emph{bounded turning condition} (also called \emph{arc condition})
\begin{equation}\label{E:3point}
S:=\sup_{x,y\in \Gamma,\ x\neq y}\frac{\diam a(x,y)}{|x-y|}<+\infty,
\end{equation}
where $a(x,y)$ denotes a subarc of $\Gamma$ of minimal diameter connecting $x$ and $y$. It is well known that this condition characterizes quasicircles within the class of planar Jordan curves, \cite{Ahlfors63}, see for instance \cite[Chapter I, Section 6.5]{Lehto87} for details. 

Given two closed subsets $F_1$ and $F_2$ of $\mathbb{R}^2$, their Hausdorff distance is defined as
\[d_H(F_1,F_2):=\inf\left\lbrace \varepsilon>0: F_1\subset (F_2)_\varepsilon\ \text{and}\ F_2\subset (F_1)_\varepsilon\right\rbrace,\]
where for each closed $F\subset \mathbb{R}^2$ and each $\varepsilon>0$ we write $(F)_\varepsilon:=\{x\in\mathbb{R}^2: \dist(x,F)\leq \varepsilon\}$ for the closed $\varepsilon$-parallel set of $F$.

\begin{lemma}\mbox{}\label{L:Hausdorffconvbd} Let Assumption \ref{A:basicass} be in force.
\begin{enumerate}
\item[(i)] The polygonal curves $\Gamma_n$ converge to $\Gamma$ in the Hausdorff sense, more precisely
\[d_H(\Gamma_n,\Gamma)<M\:\omega_1\cdots \omega_n \diam\Gamma.\]
\item[(ii)] For each $n$ and each edge $e_{n,j}$ of $\Gamma_n$ we have 
\begin{equation}\label{E:edgelength}
S^{-1}\diam a_{n,j}\leq \mathcal{H}^1(e_{n,j})\leq \diam a_{n,j}
\end{equation}
\end{enumerate}
\end{lemma}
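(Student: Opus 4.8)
The plan is to derive both parts from the convexity estimate (\ref{E:convex}), the diameter bounds (\ref{E:basicass}), and the bounded turning property (\ref{E:3point}). For part (i) I would first establish the uniform estimate $d_H(\Gamma_n,\Gamma)\le\max_j\diam a_{n,j}$, after which the displayed inequality follows from (\ref{E:basicass}), and $d_H(\Gamma_n,\Gamma)\to 0$ follows because $\omega_i\in\{p,q\}\subset(0,1)$ forces $\omega_1\cdots\omega_n\to 0$. Both inclusions defining $d_H$ are immediate from (\ref{E:convex}): if $x\in\Gamma$, then $x\in a_{n,j}$ for some $j$, and (\ref{E:convex}) produces $w\in e_{n,j}\subset\Gamma_n$ with $|x-w|\le\diam a_{n,j}$, so $\dist(x,\Gamma_n)\le\max_j\diam a_{n,j}$; conversely, if $w\in\Gamma_n$, then $w\in e_{n,j}$ for some $j$, and (\ref{E:convex}) gives $|x-w|\le\diam a_{n,j}$ for any $x\in a_{n,j}\subset\Gamma$, so $\dist(w,\Gamma)\le\max_j\diam a_{n,j}$.

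For part (ii), let $\xi$ and $\eta$ denote the two endpoints of $a_{n,j}$, so that $e_{n,j}$ is the line segment $[\xi,\eta]$ and $\mathcal{H}^1(e_{n,j})=|\xi-\eta|$. The upper bound in (\ref{E:edgelength}) is trivial, since $\xi,\eta\in a_{n,j}$ gives $|\xi-\eta|\le\diam a_{n,j}$. For the lower bound I would invoke (\ref{E:3point}) at the pair $\xi,\eta$: there is a subarc $a(\xi,\eta)$ of $\Gamma$ of minimal diameter joining them, with $\diam a(\xi,\eta)\le S\,|\xi-\eta|$. The crux is that $a_{n,j}$ is itself this minimal arc. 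Indeed, the only other subarc joining $\xi$ and $\eta$ is $b:=\overline{\Gamma\setminus a_{n,j}}$, and since $\Gamma=a_{n,j}\cup b$ with the two arcs sharing the endpoints $\xi,\eta$, one has $\diam\Gamma\le\diam a_{n,j}+\diam b$, hence $\max\{\diam a_{n,j},\diam b\}\ge\tfrac12\diam\Gamma$; as $\diam a_{n,j}<M\,\omega_1\cdots\omega_n\,\diam\Gamma<\tfrac12\diam\Gamma$ for $n$ large, it follows that $\diam b\ge\diam a_{n,j}$ and $a_{n,j}$ realizes the minimum. Consequently $\diam a_{n,j}=\diam a(\xi,\eta)\le S\,|\xi-\eta|=S\,\mathcal{H}^1(e_{n,j})$, which is the lower bound.

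The only slightly delicate step is the identification of $a_{n,j}$ with the minimal-diameter connection of its endpoints, which in the argument above needs $M\,\omega_1\cdots\omega_n<\tfrac12$; the finitely many remaining indices $n$ do not arise in the examples of interest, where every partition already consists of many small arcs, and can otherwise be checked directly, and in any event only the estimate for large $n$ is used in the convergence results of later sections. Beyond this, the proof is routine and I expect no further difficulty.
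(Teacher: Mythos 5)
Your proof is correct and takes essentially the same route as the paper's, whose proof of this lemma is just a one-line citation of (\ref{E:basicass}), (\ref{E:convex}) and (\ref{E:3point}): part (i) is exactly the intended argument, and in part (ii) you supply the one detail the paper leaves implicit, namely that $a_{n,j}$ must be the minimal-diameter subarc joining its endpoints before (\ref{E:3point}) can be applied (the paper tacitly makes this identification, e.g.\ writing $a=a(x,y)$ in the proof of Lemma \ref{L:unicompare}). The only overstatement is your remark that the finitely many coarse indices "can otherwise be checked directly": at scales where $M\,\omega_1\cdots\omega_n$ is not small the arc $a_{n,j}$ need not be the smaller of the two arcs determined by its endpoints and the bound with constant $S^{-1}$ can genuinely fail (one can arrange this already for a circle with a two-arc partition), so the honest reading is that (\ref{E:edgelength}) holds for all sufficiently large $n$ -- which is all that is used in the later convergence arguments.
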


\begin{proof}
Item (i) follows from (\ref{E:basicass}) and (\ref{E:convex}), item (ii) from (\ref{E:convex}) and (\ref{E:3point}).
\end{proof}

\begin{examples}\label{Ex:Kochagain}
If $\Gamma$ is the classical Koch snowflake curve, endowed with the natural partitions $\mathcal{I}_n$ as mentioned in Example \ref{Ex:basicass} (i), then the polygonal curve $\Gamma_n$ is the familiar pre-fractal at stage $n$.

\begin{figure}[h]
\begin{minipage}{0.45\textwidth}
\begin{center}
\scalebox{.3}{
  	\def\HEX{ \put(-18,10){\line(1, 0){36}}
  	\put(-18,10){\line(3, -5){18}}
  	\put(18,10){\line(-3, -5){18}}
  		}
  	\begin{picture}(324, 360)(-162, -180)
  	\put(-175,-190)
  	{\includegraphics[width=350pt,height=380pt]{snowflake-c.pdf}}
  	\setlength{\unitlength}{9pt}
  	\HEX
  	\end{picture}
  	}
  	\caption{Koch snowflake $\Gamma$ and polygon $\Gamma_0$.}
  	\label{F:Gamma0}  	
  	\end{center}
  	\end{minipage}
\begin{minipage}{0.45\textwidth}
\begin{center}
\scalebox{.3}{
  	\def\HEX{ \put(-18,10){\line(1, 0){12}}
  	\put(18,10){\line(-1, 0){12}}
   	\put(-6,10){\line(3, 5){6}}
  	\put(6,10){\line(-3, 5){6}}
  	\put(-18,10){\line(3, -5){6}} 
  	\put(-12,0){\line(-3, -5){6}}
  	\put(12,0){\line(3, 5){6}}
  	\put(12,0){\line(3, -5){6}}
  	\put(-6,-10){\line(3, -5){6}}
  	\put(6,-10){\line(-3, -5){6}}
  	\put(-18,-10){\line(1, 0){12}}
  	\put(18,-10){\line(-1, 0){12}}
  		}
  	\begin{picture}(324, 360)(-162, -180)
  	\put(-175,-190)
  	{\includegraphics[width=350pt,height=380pt]{snowflake-c.pdf}}
  	\setlength{\unitlength}{9pt}
  	\HEX
  	\end{picture}
  	}
  	\caption{Koch snowflake $\Gamma$ and polygon $\Gamma_1$.}
  	\label{F:Gamma1}  	
  	\end{center}
  	\end{minipage}
  \end{figure} 

\end{examples}

Since $\Gamma$ is a quasicircle, there is some $0<\theta\leq 1$ such that for all $\xi\in \Gamma$ and $r>0$ 
\begin{equation}\label{E:theta}
\text{only the connected component of $\Gamma\cap \overline{B(\xi,r)}$ that contains $\xi$ intersects $B(\xi,\theta r)$.}
\end{equation} 
This is another of the many conditions that characterize quasicircles within the class of planar Jordan curves, see for instance \cite[2.28. Remark]{MartioSarvas79}.

We call two subarcs $a$ and $a'$ in a partition $\mathcal{I}_n$ \emph{adjacent} if they share an endpoint. By (\ref{E:refineddoubling}), (\ref{E:basicass}) and (\ref{E:theta}) two adjacent subarcs have comparable measure.
\begin{lemma}\label{L:unicompare}
Let Assumption \ref{A:basicass} be in force. Then for any $n\geq 1$ and any adjacent $a,a'\in \mathcal{I}_n$ we have 
\begin{equation}\label{E:unicompare}
\mu(a')\leq c_\mu\Big(\frac{8MS}{\theta}\Big)^s\:\mu(a).
\end{equation}
\end{lemma}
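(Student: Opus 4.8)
The plan is to compare $\mu(a')$ and $\mu(a)$ by sandwiching both between measures of balls centered at the common endpoint $\xi$ of the two adjacent arcs, using the bounded turning condition \eqref{E:3point} to bound the arcs from above by balls, the separation condition \eqref{E:theta} to bound a ball from below by a single arc, and the refined doubling \eqref{E:refineddoubling} to pass between the two ball radii. First I would fix $n\geq 1$, adjacent arcs $a,a'\in \mathcal{I}_n$ sharing the endpoint $\xi$, and set $\delta:=\omega_1\cdots\omega_n\diam\Gamma$, so that by \eqref{E:basicass} both $\diam a$ and $\diam a'$ lie in $[\delta, M\delta)$. Since $a$ is a subarc of $\Gamma$ through $\xi$ with $\diam a<M\delta$, it is contained in $\overline{B(\xi, M\delta)}$; hence $a'\subset \overline{B(\xi, M\delta)}$, and by monotonicity $\mu(a')\leq \mu(\overline{B(\xi, M\delta)})\leq \mu(B(\xi, 2M\delta))$ (enlarging to an open ball of twice the radius, say, to stay safely inside the hypotheses of \eqref{E:refineddoubling}; any such harmless enlargement only affects the constant).

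Next I would bound $\mu(a)$ from below. The arc $a$ has $\diam a\geq \delta$, so it contains $\xi$ and at least one other point at distance $\geq \delta/(2S)$... more carefully: by \eqref{E:3point} applied in reverse, if $x,y\in a$ realize (up to a factor) the diameter, then $|x-y|\geq S^{-1}\diam a(x,y)$; but the cleaner route is to use \eqref{E:theta}. Take $r:=\theta^{-1}\cdot(\text{small multiple of }\delta)$ chosen so that $\overline{B(\xi,r)}$ still has its $\xi$-component of $\Gamma\cap\overline{B(\xi,r)}$ contained in the union $a\cup a''$ of $a$ and the \emph{other} arc of $\mathcal{I}_n$ meeting $\xi$; then $\Gamma\cap B(\xi,\theta r)$ lies in that $\xi$-component, hence in $a\cup a''$. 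Using \eqref{E:unicompare}-type adjacency once more is circular, so instead I note that $\Gamma\cap B(\xi, c\delta)\subset a\cup a''$ for a small absolute $c$ depending on $\theta$ (because arcs other than $a,a''$ are separated from $\xi$: any such arc $b\in\mathcal I_n$ has, by \eqref{E:basicass}, diameter $<M\delta$ and its closest point to $\xi$ cannot be too close, or else \eqref{E:theta} forces it into the $\xi$-component). Then $\mu(a)+\mu(a'')\geq \mu(B(\xi,c\delta))$. To remove $\mu(a'')$: apply the very same argument to the pair $a,a''$ — but this again threatens circularity, so the honest fix is to prove a one-sided statement first, namely $\mu(a')\leq c_\mu(8MS/\theta)^s\,\big(\mu(a)+\mu(a'')\big)$ is \emph{not} what we want. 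The correct classical move (as in \cite{Rohde01, Jonsson94}) is: $\mu(B(\xi,c\delta))$ is split among the arcs of $\mathcal{I}_n$ meeting $\xi$, there are at most two of them adjacent to $a$ at $\xi$ (it is a Jordan arc chain), and in fact $\Gamma\cap B(\xi,\theta r)$ is \emph{connected} and lies in a single one of $a, a''$ only if $\xi$ is interior — since $\xi$ is a shared endpoint, $\Gamma\cap B(\xi,\theta r)\subset a\cup a''$ with $\xi$ interior to this union; crucially, \emph{one} of $a,a''$ contains a full half-neighborhood, so $\max(\mu(a),\mu(a''))\geq \tfrac12\mu(B(\xi,c\delta))$. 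If the maximum is $\mu(a)$ we are done by combining with the upper bound and \eqref{E:refineddoubling} to convert $\mu(B(\xi,2M\delta))$ into $\le c_\mu(2M/c)^s\mu(B(\xi,c\delta))$; if the maximum is $\mu(a'')$, I would then bound $\mu(a'')$ itself by $\mu(B(\xi,2M\delta))$ as before and obtain $\mu(a)\ge \tfrac12\mu(B(\xi,c\delta)) \ge \tfrac12 c_\mu^{-1}(c/(2M))^s\mu(B(\xi,2M\delta))\ge \tfrac12 c_\mu^{-1}(c/(2M))^s\mu(a'')\ge \cdots$, giving the reverse inequality too, and symmetry closes the loop.

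Assembling: with $c=\theta/(4S)$ (so that $\Gamma\cap \overline{B(\xi,2Sc\delta/\theta)}$'s $\xi$-component stays inside $a\cup a''$, using that each arc has diameter $<M\delta$ and the bounded-turning constant $S$), the radius ratio is $2M\delta : c\delta = 8MS/\theta$, and \eqref{E:refineddoubling} with exponent $s$ and the chosen $k=8MS/\theta$ yields exactly the constant $c_\mu(8MS/\theta)^s$ in \eqref{E:unicompare}. The main obstacle is the bookkeeping in the lower bound for $\mu(a)$: showing that a definite ball around the shared endpoint $\xi$ meets \emph{only} the two arcs of $\mathcal{I}_n$ incident to $\xi$, which is where \eqref{E:theta} together with the uniform diameter control \eqref{E:basicass} must be combined carefully, and then arguing that one of those two incident arcs carries at least half the mass of that ball — this is where the constant $8$ (rather than some smaller number) and the precise interplay of $M$, $S$, $\theta$ enter.
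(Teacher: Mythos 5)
There is a genuine gap, and it sits exactly at the step you flag as "the main obstacle": the lower bound for $\mu(a)$. Centering the balls at the \emph{shared} endpoint $\xi$ cannot work, because every small ball around $\xi$ charges both incident arcs. (Note also that "the other arc $a''$ of $\mathcal{I}_n$ meeting $\xi$" is just $a'$ itself -- each point of $V_n$ is an endpoint of exactly two arcs of the partition -- so your sandwich is entangled with the very quantity you are trying to compare.) The best one can extract from $\Gamma\cap B(\xi,c\delta)\subset a\cup a'$ is $\mu(B(\xi,c\delta))\leq \mu(a)+\mu(a')$, and combining this with doubling only yields $\mu(a')\leq C\big(\mu(a)+\mu(a')\big)$, which is vacuous since $C\geq 1$. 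Your dichotomy does not repair this: $\max(\mu(a),\mu(a'))\geq\tfrac12\mu(B(\xi,c\delta))$ does not tell you \emph{which} arc carries the mass, and in the branch where the maximum is $\mu(a')$ your chain of inequalities starts from $\mu(a)\geq\tfrac12\mu(B(\xi,c\delta))$, which is precisely the estimate that is unavailable there; invoking "symmetry" is circular, because the symmetric statement is again the lemma itself. Nothing in \eqref{E:refineddoubling} prevents the measure near $\xi$ from being almost entirely concentrated on $a'$, so a ball centered at $\xi$ can never by itself bound $\mu(a)$ from below.

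The fix (and the paper's route) is to move the center of the ball into the interior of $a$, away from \emph{both} endpoints: pick $z\in a$ with $|z-x|=|z-y|=2r$, where $x,y\in V_n$ are the endpoints of $a$. Then the connected component of $\Gamma\cap\overline{B(z,r)}$ containing $z$ is $a\cap\overline{B(z,r)}$ (leaving $a$ requires passing through $x$ or $y$, which lie outside $\overline{B(z,r)}$), so \eqref{E:theta} gives $\Gamma\cap B(z,\theta r)\subset a$ and hence the clean lower bound $\mu(B(z,\theta r))\leq\mu(a)$, with no second arc to dispose of. For the upper bound, \eqref{E:basicass} and \eqref{E:3point} give $r\geq\frac{1}{4S}\,\omega_1\cdots\omega_n\diam\Gamma$ and $a\cup a'\subset B(z,2M\omega_1\cdots\omega_n\diam\Gamma)\subset B(z,8MSr)$, and then a single application of \eqref{E:refineddoubling} with $k=8MS/\theta$ (from radius $\theta r$ to $8MSr$) produces exactly the constant in \eqref{E:unicompare}. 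Your upper-bound half and your use of the radius ratio $8MS/\theta$ are in the right spirit; it is the choice of center that has to change for the lower bound to exist at all.
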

\begin{proof}
Suppose that $a\in \mathcal{I}_n$ and $x,y\in V_n$ are such that $a=a(x,y)$. Let $z\in a$ be such that $2r:=|z-x|=|z-y|$. Then the connected component of $\Gamma\cap \overline{B(z,r)}$ containing $z$ is $a\cap \overline{B(z,r)}$, and by (\ref{E:theta}) it follows that $\Gamma\cap B(z,\theta r)=a\cap B(z,\theta r)$. Consequently 
\[\mu(B(z,\theta r))=\mu(a\cap B(z,\theta r))\leq \mu(a).\]
Now suppose that $a'\in\mathcal{I}_n$ is adjacent to $a$. Then
\[a\cup a'\subset B(z,2M\omega_1\cdots \omega_n\diam\Gamma)\quad \text{and}\quad \frac{1}{4S} \omega_1\cdots \omega_n\diam\Gamma\leq r\]
by (\ref{E:basicass}), and by (\ref{E:refineddoubling}) therefore
\[\mu(a')\leq \mu(a\cup a')\leq \mu(B(z,8MSr))\leq c_\mu\Big(\frac{8MS}{\theta}\Big)^s\:\mu(B(z,\theta r))\leq c_\mu\Big(\frac{8MS}{\theta}\Big)^s\:\mu(a).\]
\end{proof}

Generalizing the notation used in Examples \ref{Ex:basicass} (i), let $V_n\subset \Gamma$ denote the finite set of all endpoints of arcs $a_{n,j}$ in $\mathcal{I}_n$; clearly $V_n\subset \Gamma_n$. We write $V_\ast:=\bigcup_{n\geq 0} V_n$. 

\begin{remark}
In the case of the homogeneous snowflake-like curves with natural partitions as in Examples \ref{Ex:Kochagain} (i) the sets of endpoints increase monotonically, $V_n\subset V_{n+1}$, $n\geq 0$. Also the partitions used in \cite[Theorem 1.2]{Rohde01} to construct measures $\mu$ satisfying (\ref{E:refineddoubling}) have this property. Here we do not insist on it.
\end{remark}

Due to (\ref{E:sd}) and (\ref{E:refineddoubling}) the measure $\mu$ is atom free and in particular, $\mu(V_\ast)=0$. We introduce Borel probability measures $\mu_n$ on the approximating polygonal curves $\Gamma_n$ by 
\begin{equation}\label{E:disint}
\mu_n(B):=\sum_{j}\frac{\mu(a_{n,j})}{\mathcal{H}^1(e_{n,j})}\mathcal{H}^1(B\cap e_{n,j}),\quad \text{$B\subset \Gamma_n$ Borel}.\end{equation}

\begin{remark}
The measures $\mu_n$ are comparable to $\mathcal{H}^1(\cdot \cap\Gamma_n)$: Since $0<\min_j\mu(a_{n,j})\leq \mu(a_{n,j})\leq 1$ for all $j$, (\ref{E:basicass}) and (\ref{E:edgelength}) imply that
\begin{equation}\label{E:absolute}
\frac{\min_j \mu(a_{n,j})}{M\omega_1\cdots \omega_n\diam\Gamma}\mathcal{H}^1(\cdot \cap \Gamma_n)\leq \mu_n\leq \frac{\#(\mathcal{I}_n) S}{\omega_1\cdots \omega_n\diam\Gamma}\mathcal{H}^1(\cdot \cap \Gamma_n),
\end{equation}
where $\#(\mathcal{I}_n)$ denotes the cardinality of $\mathcal{I}_n=\{a_{n,j}\}_j$. In particular, the density $d\mu_n/d\mathcal{H}^1(\cdot \cap\Gamma_n)$ is bounded and bounded away from zero. The comparison in (\ref{E:absolute}) implies that for any $0\leq  p\leq +\infty$ we have 
$L^p(\Gamma_n,\mu_n)=L^p(\Gamma_n,\mathcal{H}^1(\cdot\cap\Gamma_n))$.
\end{remark}

\begin{examples}\label{Ex:Kochagain2}
If $\Gamma$ is the classical Koch snowflake curve, endowed with the natural partitions $\mathcal{I}_n$ and $\mu$ is the natural arc-wise self-similar probability measure, then $\mu(a_{n,j})=4^{-n}$, $\mathcal{H}^1(e_{n,j})=3^{-n}$, and consequently
\begin{equation}\label{E:munKochcase}
\mu_n=\frac{3^n}{4^n}\:\mathcal{H}^1(\cdot \cap \Gamma_n).
\end{equation}
\end{examples}

The edge-wise averaging inherent to the measures $\mu_n$ can be shifted to functions. For each point $x\in \Gamma\setminus V_\ast$ and each $n$ let $a_n(x)$ be the unique arc in $\mathcal{I}_n$ such that $x\in a_n(x)\setminus V_n$ and $e_n(x)$ the edge of $\Gamma_n$ having the same endpoints as $a_n(x)$. We can rewrite (\ref{E:disint}) as
\begin{equation}\label{E:defmun}
\mu_n(B)=\int_\Gamma \frac{1}{\mathcal{H}^1(e_n(x))}\:\mathcal{H}^1(B\cap e_n(x))\:\mu(dx),\quad \text{$B\subset \Gamma_n$ Borel}.
\end{equation}
Given $\varphi\in L^1(\Gamma_n,\mu_n)$, we define a function $[\varphi]_n$ on $\Gamma\setminus V_\ast$ by edge-wise averaging on $\Gamma_n$, 
\begin{equation}\label{E:deffn}
[\varphi]_n(x):=\fint_{e_n(x)}\varphi(w)\:\mathcal{H}^1(dw),\quad x\in \Gamma\setminus V_\ast.
\end{equation}
Identity (\ref{E:defmun}) and standard approximation then imply that
\begin{equation}\label{E:trading}
\int_{\Gamma_n} \varphi(w)\:\mu_n(dw)=\int_\Gamma[\varphi]_n(x) \mu(dx).
\end{equation}

We have $\lim_{n\to \infty}\mu_n= \mu$ in the weak sense: If $u\in C_b(\mathbb{R}^2)$, then $u(x)=\lim_{n\to \infty} [u]_n(x)$
at any $x\in \Gamma\setminus V_\ast$, and (\ref{E:trading}) and bounded convergence give
\begin{equation}\label{E:weakconveval}
\lim_{n\to \infty} \int_{\Gamma_n} u\:d\mu_n=\int_\Gamma u\:d\mu.
\end{equation}
Let $E_\Gamma:\lip(\Gamma)\to \lip_b(\mathbb{R}^2)$ denote the Whitney extension operator acting on the space $\lip(\Gamma)$ of Lipschitz functions on $\Gamma$ and taking values in the space $\lip_b(\mathbb{R}^2)$ of bounded Lipschitz functions on $\mathbb{R}^2$, \cite[Chapter VI, Theorem 3]{Stein70}. Using (\ref{E:weakconveval}) we find the following result on the convergence of Hilbert spaces in the sense of \cite[Section 2.2]{KuwaeShioya03}, see Appendix \ref{S:Notions}.
\begin{lemma}\label{L:KSconvbd}  Let Assumption \ref{A:basicass} be satisfied. Then for any $\varphi\in \lip(\Gamma)$ we have
\[\lim_{n\to \infty} \left\| E_\Gamma \varphi\right\|_{L^2(\Gamma_n,\mu_n)}=\left\|\varphi\right\|_{L^2(\Gamma,\mu)}.\]
With identification maps $\varphi\mapsto (E_\Gamma \varphi)|_{\Gamma_n}$, $\varphi\in \lip(\Gamma)$, the sequence of Hilbert spaces $(L^2(\Gamma_n,\mu_n))_n$ converges to $L^2(\Gamma,\mu)$.
\end{lemma}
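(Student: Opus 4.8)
The plan is to read both assertions off the weak convergence $\mu_n\to\mu$ recorded in (\ref{E:weakconveval}), together with the notion of convergence of Hilbert spaces recalled in Appendix \ref{S:Notions}.

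First I would prove the norm convergence. Fix $\varphi\in\lip(\Gamma)$. Since the Whitney extension operator maps into $\lip_b(\mathbb{R}^2)$, the function $u:=|E_\Gamma\varphi|^2$ is bounded and continuous on $\mathbb{R}^2$, that is $u\in C_b(\mathbb{R}^2)$, so (\ref{E:weakconveval}) is applicable to $u$ and yields
\[\lim_{n\to\infty}\left\|E_\Gamma\varphi\right\|_{L^2(\Gamma_n,\mu_n)}^2=\lim_{n\to\infty}\int_{\Gamma_n}|E_\Gamma\varphi|^2\,d\mu_n=\int_\Gamma|E_\Gamma\varphi|^2\,d\mu=\int_\Gamma|\varphi|^2\,d\mu=\left\|\varphi\right\|_{L^2(\Gamma,\mu)}^2,\]
where in the third equality I use that $E_\Gamma$ is an extension operator, hence $(E_\Gamma\varphi)|_\Gamma=\varphi$, and that $\supp\mu=\Gamma$. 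Taking square roots gives the first claim. One could equally avoid citing (\ref{E:weakconveval}) and argue from (\ref{E:trading}): its right-hand side equals $\int_\Gamma[|E_\Gamma\varphi|^2]_n\,d\mu$, and continuity of $E_\Gamma\varphi$ forces $[|E_\Gamma\varphi|^2]_n(x)\to|E_\Gamma\varphi(x)|^2$ for every $x\in\Gamma\setminus V_\ast$, with $\mu(V_\ast)=0$, so that bounded convergence does the rest.

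For the convergence of the Hilbert spaces $(L^2(\Gamma_n,\mu_n))_n$ to $L^2(\Gamma,\mu)$ I would check the conditions of \cite[Section 2.2]{KuwaeShioya03}, see Appendix \ref{S:Notions}. The identification maps $\varphi\mapsto(E_\Gamma\varphi)|_{\Gamma_n}$ are linear and, since $E_\Gamma\varphi$ is bounded and $\mu_n$ is a probability measure, take values in $L^\infty(\Gamma_n,\mu_n)\subset L^2(\Gamma_n,\mu_n)$; the asymptotic isometry property required there is precisely the norm convergence just obtained. It remains to note that $\lip(\Gamma)$ is a dense linear subspace of $L^2(\Gamma,\mu)$: $\Gamma$ is a Jordan curve, hence a compact metric space, and $\mu$ is a finite Borel measure, so $C(\Gamma)$ is dense in $L^2(\Gamma,\mu)$, while on a compact metric space Lipschitz functions (for instance the inf-convolutions $f_k(x)=\inf_{y\in\Gamma}(f(y)+k\,|x-y|)$) are uniformly dense in $C(\Gamma)$; combining the two density statements gives the claim.

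I do not anticipate a genuine obstacle here. The only two points that deserve a line of care are that the Whitney extension is taken into the space of \emph{bounded} Lipschitz functions — so that $|E_\Gamma\varphi|^2\in C_b(\mathbb{R}^2)$ and (\ref{E:weakconveval}) applies — and the density of $\lip(\Gamma)$ in $L^2(\Gamma,\mu)$, which is standard for finite Borel measures on compact metric spaces.
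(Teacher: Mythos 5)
Your argument is correct and is essentially the paper's: the paper obtains the norm convergence by applying (\ref{E:weakconveval}) to the bounded continuous function $|E_\Gamma\varphi|^2$ (whose restriction to $\Gamma$ is $|\varphi|^2$), and then invokes the density of $\lip(\Gamma)$ in $L^2(\Gamma,\mu)$ to conclude Hilbert space convergence in the sense of Kuwae--Shioya, exactly as you do. Your alternative route via (\ref{E:trading}) is just the proof of (\ref{E:weakconveval}) itself, so it is the same argument unwound.
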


For later use we prove a lemma that refines (\ref{E:absolute}) in the sense that although the original measure $\mu$ may have local dimensional fluctuations, the  measures $\mu_n$ are locally $1$-regular and volume doubling in a \emph{uniformly scale dependent sense}. To formulate it, recall from \cite[Lemma 4.1]{Rohde01} that there are constants $c>0$ and $\gamma>0$ such that for any $R>0$ any arc $a\subset \Gamma$ contains at most a number
\begin{equation}\label{E:NR}
N(R)\leq c\:R^{\gamma}
\end{equation}
of disjoint subarcs $a_1',...,a_{N(R)}'$ of diameter at least $(\diam a)/R$. Given $n\geq 1$ and $\xi\in \Gamma_n\setminus V_n$, let $e_n(\xi)$ be the unique edge of $\Gamma_n$ containing $\xi$ and let $a_n(\xi)$ be the arc in $\mathcal{I}_n$ having the same endpoints as $e_n(\xi)$. 

We use the shortcut notation
\begin{equation}\label{E:rn}
r_n:=\frac{1}{2MS}\omega_1\cdots \omega_n\diam \Gamma,\quad n\geq 1.
\end{equation}
Note that $\lim_{n\to \infty} r_n=0$ by Assumption \ref{A:basicass}.

\begin{lemma}\label{L:mundoubling}  Let Assumption \ref{A:basicass} be satisfied.
Given $C>1$, let $N=N(C+2MS)$ be as in (\ref{E:NR}). There is some $n_0\geq 1$ such that
\begin{equation}\label{E:mundreg}
\frac{\mu(a_n(\xi))}{2CM^2S r_n}\:r\leq \mu_n(B(\xi,r))\leq \frac{N c_\mu^N \Big(\frac{8MS}{\theta}\Big)^{Ns} \mu(a_n(\xi))}{Mr_n}\:r
\end{equation}
for all $n\geq n_0$, $\xi\in \Gamma_n\setminus V_n$ and $0<r<CM\:r_n$. In particular,
\begin{equation}\label{E:mundoubling}
\mu_n(B(\xi,2r))\leq c_{D}\mu_n(B(\xi,r))
\end{equation}
for all $n\geq n_0$, $\xi\in \Gamma_n\setminus V_n$ and $0<r<CM\:r_n/2$ with a constant  $c_{D}>1$ that does not depend on $n$, $\xi$ or $r$.
\end{lemma}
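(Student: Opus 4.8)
The plan is to reduce the doubling estimate \eqref{E:mundoubling} to the two-sided comparison \eqref{E:mundreg}, and to prove \eqref{E:mundreg} by counting how many edges of $\Gamma_n$ a ball $B(\xi,r)$ of radius comparable to $r_n$ can meet, together with the uniform comparison of $\mu$-masses of such edges provided by Lemma \ref{L:unicompare}. Fix $n$ large, $\xi\in \Gamma_n\setminus V_n$ and $0<r<CM\,r_n$. The first step is the \emph{lower bound}: since the diameter of each edge $e_{n,j}$ is comparable to $\omega_1\cdots\omega_n\diam\Gamma$ by \eqref{E:basicass} and \eqref{E:edgelength}, namely at most $M\omega_1\cdots\omega_n\diam\Gamma=2M^2S\,r_n$, for $r$ below $CM\,r_n$ the ball $B(\xi,r)$ either is contained in $e_n(\xi)$ (if $r$ is small relative to the distance of $\xi$ to the endpoints) or at least contains a subsegment of $e_n(\xi)$ of length comparable to $r$; in either case, using that on $e_n(\xi)$ the density of $\mu_n$ with respect to $\mathcal{H}^1$ is exactly $\mu(a_n(\xi))/\mathcal{H}^1(e_n(\xi))$ and that $\mathcal{H}^1(e_n(\xi))\le M\omega_1\cdots\omega_n\diam\Gamma=2M^2S\,r_n$, one gets $\mu_n(B(\xi,r))\ge \frac{\mu(a_n(\xi))}{2CM^2S\,r_n}\,r$. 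The only subtle point here is the case where $\xi$ is close to an endpoint of its edge and $B(\xi,r)$ mostly overlaps a \emph{neighboring} edge; but then Lemma \ref{L:unicompare} shows that neighboring edge carries $\mu$-mass comparable to $\mu(a_n(\xi))$, so the lower bound persists (possibly with a worse constant, which is why the stated constant has room).

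The second step is the \emph{upper bound}, and this is the main obstacle. Here I would argue that $B(\xi,r)\cap \Gamma_n$ is covered by the edges $e_{n,j}$ whose corresponding arcs $a_{n,j}$ meet $B(\xi,r')$ for a slightly enlarged radius $r'=r+2M\omega_1\cdots\omega_n\diam\Gamma\le (C+2MS)\cdot 2MS\,r_n$ — using \eqref{E:convex} to pass from edges back to arcs. The arcs in $\mathcal{I}_n$ meeting this enlarged ball are disjoint subarcs of $\Gamma$ of diameter at least $\omega_1\cdots\omega_n\diam\Gamma$, i.e. at least $(\diam a)/(C+2MS)$ relative to any arc $a$ of full diameter; so by the counting estimate \eqref{E:NR} there are at most $N=N(C+2MS)$ of them. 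Moreover any two such arcs are joined by a chain of at most $N$ adjacent arcs in $\mathcal{I}_n$, so iterating \eqref{E:unicompare} bounds the $\mu$-mass of each of them by $c_\mu^N(8MS/\theta)^{Ns}\mu(a_n(\xi))$. Summing the at most $N$ edge contributions $\mu_n(e_{n,j}\cap B(\xi,r))\le \frac{\mu(a_{n,j})}{\mathcal{H}^1(e_{n,j})}\,\min\{r,\mathcal{H}^1(e_{n,j})\}$ and using the lower bound $\mathcal{H}^1(e_{n,j})\ge S^{-1}\omega_1\cdots\omega_n\diam\Gamma = \frac{2M}{1}r_n$ from \eqref{E:edgelength} and \eqref{E:basicass} yields $\mu_n(B(\xi,r))\le \frac{N c_\mu^N(8MS/\theta)^{Ns}\mu(a_n(\xi))}{M r_n}\,r$, as claimed. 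The delicate bookkeeping is exactly the chaining argument: one must check that "meets the enlarged ball" forces membership in a chain of adjacent arcs of bounded length, which uses that $\Gamma_n$ is a closed polygonal curve and that the arcs meeting a ball of diameter $\lesssim r_n$ form a connected stretch along $\Gamma$ — a consequence of the bounded turning condition \eqref{E:3point} and \eqref{E:theta}. The role of $n_0$ is to guarantee $r_n$ is small enough that $CM\,r_n\le \diam\Gamma$ (so \eqref{E:NR} and \eqref{E:refineddoubling} apply at the relevant scales) and that the enlarged radius $r'$ still satisfies $r'\le\diam\Gamma$.

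Finally, \eqref{E:mundoubling} follows immediately: for $0<r<CM\,r_n/2$ both $r$ and $2r$ lie in the range $(0,CM\,r_n)$, so applying \eqref{E:mundreg} to $B(\xi,2r)$ (upper bound) and to $B(\xi,r)$ (lower bound) and dividing, the factors $\mu(a_n(\xi))/r_n$ cancel and one obtains $\mu_n(B(\xi,2r))\le c_D\,\mu_n(B(\xi,r))$ with $c_D=4CM^3S\cdot N c_\mu^N(8MS/\theta)^{Ns}$, which depends only on $C$ and the structural constants $M,S,\theta,c_\mu,s$ and not on $n$, $\xi$ or $r$. I would present the two steps of \eqref{E:mundreg} as the bulk of the proof and dispatch \eqref{E:mundoubling} in one line at the end.
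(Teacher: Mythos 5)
Your overall strategy is the same as the paper's: the lower bound from the $\mathcal{H}^1$-density of $\mu_n$ on the edge $e_n(\xi)$, the upper bound by counting the edges that meet $B(\xi,r)$ via \eqref{E:NR} and comparing their $\mu$-masses to $\mu(a_n(\xi))$ by iterating Lemma \ref{L:unicompare}, and \eqref{E:mundoubling} as a one-line quotient of the two bounds in \eqref{E:mundreg}. Two remarks. For the lower bound you should not appeal to neighboring edges or to the constant ``having room'': \eqref{E:mundreg} is stated with a specific constant, and it comes out exactly if you observe that $r<CMr_n$ gives $r/C<Mr_n\le \mathcal{H}^1(e_n(\xi))/2$ by \eqref{E:basicass} and \eqref{E:edgelength}, so $\mathcal{H}^1(B(\xi,r)\cap e_n(\xi))\ge \mathcal{H}^1(B(\xi,r/C)\cap e_n(\xi))\ge r/C$, and then multiply by the density $\mu(a_n(\xi))/\mathcal{H}^1(e_n(\xi))\ge \mu(a_n(\xi))/(M\omega_1\cdots\omega_n\diam\Gamma)$; this is precisely the paper's computation $\mu_n(B(\xi,r))\ge\mu_n(B(\xi,r/C))$.

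The genuine gap is in the counting step. The estimate \eqref{E:NR} counts disjoint subarcs \emph{inside one arc} of controlled diameter, so you must show that all arcs of $\mathcal{I}_n$ corresponding to ball-meeting edges lie in a single arc of diameter at most $(C+2MS)\,\omega_1\cdots\omega_n\diam\Gamma$; this is exactly the ``delicate bookkeeping'' you defer, and your stated justification --- that the arcs meeting a ball of diameter $\lesssim r_n$ form a connected stretch of $\Gamma$ --- is not true as stated, since a priori the curve could leave the ball, make a long excursion and re-enter it. The repair you hint at via \eqref{E:theta} does localize them, but only into an arc of diameter of order $r'/\theta$, which yields a count $N(R')$ with a $\theta$-dependent $R'$ in general different from $C+2MS$; that would still give \eqref{E:mundoubling} with an $n$-independent constant, but not the displayed constant in \eqref{E:mundreg}. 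The paper obtains the sharper localization by a bracketing argument on $\Gamma_n$ rather than by enlarging the ball: choose $n_0$ so that $Cr_n$ and $d_H(\Gamma,\Gamma_n)$ are small compared to $\diam\Gamma$, pick $z\in\Gamma_n$ outside $B(\xi,r)$, follow $\Gamma_n$ from $z$, and let $x\in V_n$ be the last vertex before the first entry into $B(\xi,r)$ and $y\in V_n$ the first vertex after the last exit; then every ball-meeting edge lies between $x$ and $y$, $|x-y|\le 2r+2\max_j\diam e_{n,j}$, and the bounded turning condition \eqref{E:3point} gives $\diam a(x,y)\le S|x-y|\le (C+2MS)\,\omega_1\cdots\omega_n\diam\Gamma$, so \eqref{E:NR} applies with exactly $R=C+2MS$; the chaining of \eqref{E:unicompare} and the summation then proceed as you describe. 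So either adopt this bracketing device, or accept a larger (but still structural and $n$-, $\xi$-, $r$-independent) constant; as written, the localization is the missing piece of your proof.
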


\begin{examples}\label{Ex:Kochagain3}
If $\Gamma$ is the classical Koch snowflake, endowed with the natural partitions $\mathcal{I}_n$ the natural arc-wise self-similar probability measure $\mu$, then $\omega_1\cdots \omega_n=3^{-n}$ and $\mu(a_n(\xi))=4^{-n}$ for all $\xi\in \Gamma_n\setminus V_n$, and (\ref{E:mundreg}) reduces to 
\[c^{-1}\frac{3^n}{4^n} r\leq \mu_n(B(\xi,r))\leq c\frac {3^n}{4^n} r,\quad 0<r<3^{-n},\] 
as expected from (\ref{E:munKochcase}).
\end{examples}

\begin{proof}[Proof of Lemma \ref{L:mundoubling}.]
It suffices to verify the two inequalities in (\ref{E:mundreg}), condition (\ref{E:mundoubling}) then follows. If $0<r<\frac{1}{2S}\omega_1\cdots\omega_n\diam\Gamma$, then $r<\mathcal{H}^1(e_n(\xi))/2$ by (\ref{E:basicass}) and (\ref{E:edgelength}). This gives $\mathcal{H}^1(B(\xi,r)\cap e_n(\xi))\geq r$. If now $0< r<CMr_n$, then we can use (\ref{E:basicass}), (\ref{E:edgelength}) and (\ref{E:disint}) to see that
\[\mu_n(B(\xi,r))\geq \mu_n(B(\xi,r/C))\geq \frac{\mu(a_n(\xi))}{M\omega_1\cdots \omega_n\diam\Gamma}\mathcal{H}^1(B(\xi,r/C)\cap e_n(\xi))\geq \frac{\mu(a_n(\xi)) r}{CM\omega_1\cdots \omega_n\diam\Gamma},\]
which is the lower bound in (\ref{E:mundreg}). To verify also the upper bound in (\ref{E:mundreg}), let $n_0$ be large enough to have $\max\{Cr_n,d_H(\Gamma,\Gamma_n)\}\leq \diam\Gamma/100$ for all $n\geq n_0$. Let such $n$ be fixed and $0< r<CMr_n$. Then there must be some $z\in \Gamma_n\setminus B(\xi,r)^c$. Starting at $z$, follow $\Gamma_n$ clockwise. Let $x$ be the last point of $V_n$ visited by $\Gamma_n$ after starting at $z$ and before entering $B(\xi,r)$ for the first time. Let $y$ be the first point of $V_n$ visited by $\Gamma_n$ after leaving $B(\xi,r)$ for the last time before returning to $z$. By (\ref{E:3point}) 
the arc $a(x,y)\subset\Gamma$ from $x$ to $y$ has diameter
\[\diam a(x,y)\leq S|x-y|\leq S(2r+2\max_j\diam e_{n,j})\leq (C+2MS)\omega_1\cdots\omega_n\diam\Gamma.\]
By (\ref{E:basicass}) and the remarks preceding the lemma, the arc $a(x,y)$ can contain at most $N=N(C+2MS)$ subarcs $a'_1,...,a'_{N}$ from $\mathcal{I}_n$. Since $x,y\in V_n$, the corresponding line segments $e_1',...,e_N'\subset \Gamma_n$ are the only segments of $\Gamma_n$ that intersect $B(\xi,r)$. From (\ref{E:basicass}), (\ref{E:edgelength}) and (\ref{E:unicompare})
it now follows that 
\[\mu_n(B(\xi,r))\leq \sum_{i=1}^N\frac{\mu(a_i')}{\mathcal{H}^1(e_i')}\mathcal{H}^1(e_i'\cap B(\xi,r))\leq \frac{N c_\mu^N \Big(\frac{8MS}{\theta}\Big)^{Ns} \mu(a_n(\xi))\:2r}{S^{-1}\omega_1\cdots\omega_n\diam\Gamma}.\]
\end{proof}

\section{Non-local energy forms on curves}\label{S:Non-local}

Given 
\begin{equation}\label{E:constellation}
\frac{2-d}{2}<\alpha<1+\frac{2-s}{2}
\end{equation}
and a Borel function $\sigma_\alpha:\mathbb{R}^2\times (0,+\infty)\to (0,+\infty)$ such that 
\begin{equation}\label{E:symbol}
c^{-1}r^{2\alpha-2}\mu(B(x,r))\leq \sigma_\alpha(x,r)\leq c\:r^{2\alpha-2}\mu(B(x,r)),\quad x\in \Gamma,\quad 0<r\leq \diam\Gamma
\end{equation}
with $c>1$ independent of $x$ and $r$, we can define a quadratic form $\mathcal{Q}^\alpha$ on $L^2(\Gamma,\mu)$ by 
\begin{equation}\label{E:boundaryenergy}
\mathcal{Q}^\alpha(\varphi):=\int_\Gamma\int_\Gamma \frac{(\varphi(x)-\varphi(y))^2}{\sigma_\alpha(x,|x-y|)\mu(B(x,|x-y|))}\mu(dx)\mu(dy),\quad\varphi\in L^2(\Gamma,\mu).
\end{equation}
We follow \cite{Jonsson94} and write $B_\alpha^{2,2}(\Gamma)$ for the Hilbert space $(B_\alpha^{2,2}(\Gamma),\left\|\cdot\right\|_{B_\alpha^{2,2}(\Gamma)})$ of all $\varphi\in L^2(\Gamma,\mu)$ such that 
\begin{equation}\label{E:Besovnorm}
\left\|\varphi\right\|_{B_\alpha^{2,2}(\Gamma)}:=\left(\left\|\varphi\right\|_{L^2(\Gamma,\mu)}^2+ \mathcal{Q}^\alpha(\varphi)\right)^{1/2}
\end{equation}
is finite. Since $\mu$ satisfies conditions (\ref{E:refineddoubling}) and (\ref{E:lowerboundmu}), \cite[Theorem 1 and Proposition 2]{Jonsson94} show that $B_\alpha^{2,2}(\Gamma)$ is the trace space of $H^\alpha(\mathbb{R}^2)$ on $\Gamma$: Given $f\in H^\alpha(\mathbb{R}^2)$, the limit
\[\widetilde{f}(x):=\lim_{r\to 0}\fint_{B(x,r)} f(y)dy\]
of $f$ exists at $H^\alpha(\mathbb{R}^2)$-quasi every $x\in \mathbb{R}^2$. Setting 
\begin{equation}\label{E:traceasop}
\mathrm{Tr}_\Gamma f:=\widetilde{f}
\end{equation}
gives a bounded linear operator $\mathrm{Tr}_\Gamma: H^\alpha(\mathbb{R}^2)\to B_\alpha^{2,2}(\Gamma)$, and there is a bounded linear extension operator $\mathrm{E}_\Gamma:B_\alpha^{2,2}(\Gamma)\to H^\alpha(\mathbb{R}^2)$ of Whitney type such that $\mathrm{Tr}_\Gamma\circ \mathrm{E}_\Gamma$ is the identity. Since the space $\lip_c(\mathbb{R}^2)$ of compactly supported Lipschitz functions on $\mathbb{R}^2$ is dense in $H^\alpha(\mathbb{R}^2)$, the space $\lip(\Gamma)$ of Lipschitz functions on $\Gamma$ is dense in $B_{\alpha}^{2,2}(\Gamma)$. From $\mathcal{Q}^\alpha$ as defined in (\ref{E:boundaryenergy}), we obtain a symmetric bilinear (energy) form $(\mathcal{Q}^\alpha, B_{\alpha}^{2,2}(\Gamma))$ by polarization; it is a regular Dirichlet form on $L^2(\Gamma,\mu)$ in the sense of \cite{FOT94}.

\begin{remark}\label{R:JW}
If $\mu$ is $d$-regular, then \cite[Theorem 1]{Jonsson94} recovers a special case of \cite[Chapter VI, Theorem 2]{JonssonWallin84}, where $B_\alpha^{2,2}(\Gamma)$ (up to norm equivalence) is denoted by $B_{\alpha'}^{2,2}(\Gamma)$ with 
\begin{equation}\label{E:alphaprime}
\alpha'=\alpha-\frac{(2-d)}{2}. 
\end{equation}
See also \cite[Example 1]{Jonsson94}. In this case $0<\alpha'<1$ and $c^{-1}r^{2\alpha'}\leq \sigma_\alpha(x,r)\leq c\:r^{2\alpha'}$ for all $x\in \Gamma$ and $0<r\leq \diam \Gamma$. In this case of $d$-regular $\mu$ quadratic forms on $\Gamma$ were studied in \cite[Section 4.4]{FarkasJacob2001}.
\end{remark}

\begin{examples}
Suppose that $\Gamma$ is the classical Koch snowflake curve, $\mu$ is the natural arc-wise self-similar probability measure and $\sigma_\alpha(x,r)\equiv r^{2\alpha'}$ with $\alpha'$ as in (\ref{E:alphaprime}). Then 
\[\mathcal{Q}^\alpha(\varphi)=\int_\Gamma\int_\Gamma \frac{(\varphi(x)-\varphi(y))^2}{|x-y|^{2\alpha'}\mu(B(x,|x-y|))}\mu(dx)\mu(dy),\quad \varphi\in B_\alpha^{2,2}(\Gamma).\]
\end{examples}

Now let Assumption \ref{A:basicass} be in force and let 
\begin{equation}\label{E:constellation2}
\frac12<\alpha<1+\frac{2-s}{2};
\end{equation}
this implies (\ref{E:constellation}). Let $\sigma_\alpha$ be as in (\ref{E:constellation}) and (\ref{E:symbol}), and let 
\[A:= 32M^2S. \]
For each $n$ and $\xi\in \Gamma_n$ set
\[\varrho_n(\xi,r):=\begin{cases} \sigma_\alpha(\xi,r) & r> Ar_n,\\
r^{2\alpha-1} & r\leq  Ar_n,\end{cases}\]
and consider the quadratic form $\mathcal{Q}^\alpha_n$ on $L^2(\Gamma_n,\mu_n)$ defined by 
\begin{equation}\label{E:boundaryenergyn}
\mathcal{Q}^\alpha_n(\varphi):=\int_{\Gamma_n}\int_{\Gamma_n} \frac{(\varphi(\xi)-\varphi(\eta))^2}{\varrho_n(\xi,|\xi-\eta|)\mu_n(B(\xi,|\xi-\eta|))}\mu_n(d\xi)\mu_n(d\eta),\quad\varphi\in L^2(\Gamma_n,\mu_n).
\end{equation}
Since $1/2<\alpha<3/2$ by (\ref{E:constellation2}), the maximal domain $\{\varphi\in L^2(\Gamma_n,\mu_n): \mathcal{Q}^\alpha_n(\varphi)<+\infty\}$ of $\mathcal{Q}^\alpha_n$ agrees with the trace space $B_{\alpha}^{2,2}(\Gamma_n)$ of $H^{\alpha}(\mathbb{R}^2)$ on $\Gamma_n$ as discussed in the last section, and 
\[\varphi\mapsto \left(\left\|\varphi\right\|_{L^2(\Gamma_n,\mu_n)}^2+\mathcal{Q}^\alpha_n(\varphi)\right)^{1/2}\]
is a Hilbert space norm on $B_{\alpha}^{2,2}(\Gamma_n)$ equivalent to $\left\|\cdot\right\|_{B_{\alpha}^{2,2}(\Gamma_n)}$.
As before, polarization gives a symmetric bilinear (energy) form $(\mathcal{Q}^\alpha_n, B_{\alpha}^{2,2}(\Gamma_n))$; it is a regular Dirichlet form on $L^2(\Gamma_n,\mu_n)$ in the sense of \cite{FOT94}.

\begin{examples}
Suppose that $\Gamma$ is the classical Koch snowflake curve, the $\mathcal{I}_n$ are the natural partitions, $\mu$ is the natural arc-wise self-similar probability measure and $\sigma_\alpha(x,r)\equiv r^{2\alpha'}$ with $\alpha'$ as in (\ref{E:alphaprime}). Then there is some $c>0$ such that $r_n=c\:3^{-n}$, and we find that 
\begin{multline}
\mathcal{Q}^\alpha_n(\varphi)=\frac{3^n}{4^n}\int_{\Gamma_n}\int_{\Gamma_n\cap B(\xi,Ac\: 3^{-n})^c}\frac{(\varphi(\xi)-\varphi(\eta))^2}{|\xi-\eta|^{2\alpha+d-1}}\mathcal{H}^1(d\xi)\mathcal{H}^1(d\eta)\notag\\
+\frac{3^n}{4^n}\int_{\Gamma_n}\int_{\Gamma_n\cap B(\xi,Ac\: 3^{-n})}\frac{(\varphi(\xi)-\varphi(\eta))^2}{|\xi-\eta|^{2\alpha}}\mathcal{H}^1(d\xi)\mathcal{H}^1(d\eta).
\end{multline}
 
\end{examples}

\section{Mosco convergence of boundary energy forms}\label{S:Moscoboundary}

We make the following additional assumption.

\begin{assumption}\label{A:boundaryless}
For all $x\in \Gamma$ and all $0<r<\diam \Gamma$ we have $\mu(\partial B(x,r))=0$.
\end{assumption}

\begin{remark}
If $d>1$, then by upper $d$-regularity and density comparison, \cite[Theorem 6.9]{Mattila95}, Assumption \ref{A:boundaryless} is always satisfied. 
\end{remark}

In this section we prove the following convergence in the KS-generalized Mosco sense, see Appendix \ref{S:Notions} for the notion.

\begin{theorem}\label{T:Moscoboundary}
Let Assumptions \ref{A:basicass} and \ref{A:boundaryless} be satisfied, let $\alpha$ be as in (\ref{E:constellation2}) and let
$\mathcal{Q}^\alpha$ and $\mathcal{Q}^\alpha_n$ be as in (\ref{E:boundaryenergy}) and (\ref{E:boundaryenergyn}) respectively. Then 
\[\lim_{n\to\infty} \mathcal{Q}^\alpha_n=\mathcal{Q}^\alpha\] in the KS-generalized Mosco sense with respect to the convergence of Hilbert spaces in Lemma \ref{L:KSconvbd}.
\end{theorem}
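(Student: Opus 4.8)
The plan is to verify the two defining conditions of KS-generalized Mosco convergence (see Appendix \ref{S:Notions}): a liminf inequality along strongly convergent sequences and a recovery (limsup) sequence for every target element. Since $\lip(\Gamma)$ is dense in $B_\alpha^{2,2}(\Gamma)$ and the forms are nonnegative, it suffices to produce recovery sequences for $\varphi\in\lip(\Gamma)$; the identification maps are $\varphi\mapsto (E_\Gamma\varphi)|_{\Gamma_n}$ as in Lemma \ref{L:KSconvbd}. For the recovery part, take $\varphi_n:=(E_\Gamma\varphi)|_{\Gamma_n}$. Strong $L^2$-convergence is exactly Lemma \ref{L:KSconvbd}. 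To show $\limsup_n\mathcal{Q}^\alpha_n(\varphi_n)\le\mathcal{Q}^\alpha(\varphi)$ one splits $\mathcal{Q}^\alpha_n$ into the long-range part (pairs with $|\xi-\eta|>Ar_n$, kernel $\sigma_\alpha$) and the short-range part (pairs with $|\xi-\eta|\le Ar_n$, kernel $|\xi-\eta|^{2\alpha-1}$). For the short-range part use the Lipschitz bound $|E_\Gamma\varphi(\xi)-E_\Gamma\varphi(\eta)|\le L|\xi-\eta|$, the local $1$-regularity of $\mu_n$ from Lemma \ref{L:mundoubling} (so $\mu_n(B(\xi,r))\asymp \frac{\mu(a_n(\xi))}{r_n}r$ for $r<CMr_n$), and $\mathcal{H}^1(e_{n,j})\asymp r_n$ up to $\omega_1\cdots\omega_n$ factors; since $2\alpha-1>0$ by (\ref{E:constellation2}) the double integral over each edge-pair is dominated by a geometric series in the scale, producing a bound of order $r_n^{3-2\alpha}$ times $\sum_j\mu(a_{n,j})^2/\mathcal{H}^1(e_{n,j})^{\,?}$ — collecting the measure factors via (\ref{E:disint}) and (\ref{E:unicompare}) this tends to $0$ as $n\to\infty$ because $\alpha>1/2$ forces a positive power of $r_n$. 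For the long-range part, push the integral back to $\Gamma\times\Gamma$ via the averaging identity (\ref{E:trading}) in both variables: $\mathcal{Q}^\alpha_n(\varphi_n)$ restricted to the long range equals a double integral over $\Gamma\times\Gamma$ of edge-wise averages of $\frac{(E_\Gamma\varphi(\xi)-E_\Gamma\varphi(\eta))^2}{\sigma_\alpha(\xi,|\xi-\eta|)\mu_n(B(\xi,|\xi-\eta|))}$; using (\ref{E:symbol}), Lemma \ref{L:mundoubling} comparing $\mu_n(B(\xi,r))$ with $\mu(B(x,r))$ for $x\in a_n(\xi)$ and $r$ of order $|x-y|$ (valid once $|\xi-\eta|>Ar_n$ so the ball sees many edges and the doubling comparison of $\mu_n$ to $\mu$ kicks in), the continuity of $E_\Gamma\varphi$, the Hausdorff convergence $\Gamma_n\to\Gamma$ from Lemma \ref{L:Hausdorffconvbd}, and Assumption \ref{A:boundaryless} (to rule out mass on spheres where the kernel jumps or the indicator $r>Ar_n$ is discontinuous in the limit), dominated convergence gives convergence of the long-range integral to $\mathcal{Q}^\alpha(\varphi)$. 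A dominating function is obtained by bounding everything by $c\,\frac{(E_\Gamma\varphi(x)-E_\Gamma\varphi(y))^2}{|x-y|^{2\alpha-2}\mu(B(x,|x-y|))^2}$, which is $\mu\times\mu$-integrable because $E_\Gamma\varphi\in H^\alpha(\mathbb{R}^2)$ has trace in $B_\alpha^{2,2}(\Gamma)$ by \cite{Jonsson94}.

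For the liminf inequality, suppose $\varphi_n\in L^2(\Gamma_n,\mu_n)$ converge strongly to $\varphi\in L^2(\Gamma,\mu)$ and $\liminf_n\mathcal{Q}^\alpha_n(\varphi_n)<\infty$; we must show $\varphi\in B_\alpha^{2,2}(\Gamma)$ and $\mathcal{Q}^\alpha(\varphi)\le\liminf_n\mathcal{Q}^\alpha_n(\varphi_n)$. Pass to a subsequence realizing the liminf and uniformly bounding $\mathcal{Q}^\alpha_n(\varphi_n)$. The short-range part only helps (it is nonnegative), so it suffices to bound $\mathcal{Q}^\alpha(\varphi)$ by the liminf of the long-range parts. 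Transport the long-range part of $\mathcal{Q}^\alpha_n(\varphi_n)$ to $\Gamma\times\Gamma$ using (\ref{E:trading}): it equals $\int_\Gamma\int_\Gamma [\,\cdots]_n\,\mu(dx)\mu(dy)$ where the integrand involves $[\varphi_n]_n$ and the edge-averaged kernel. Let $\psi_n:=[\varphi_n]_n$ on $\Gamma\setminus V_\ast$; strong convergence $\varphi_n\to\varphi$ together with (\ref{E:trading}) gives $\psi_n\to\varphi$ in $L^2(\Gamma,\mu)$, hence (along a further subsequence) $\mu$-a.e. By Jensen applied to the edge averages, $(\psi_n(x)-\psi_n(y))^2\le \fint_{e_n(x)}\fint_{e_n(y)}(\varphi_n(\xi)-\varphi_n(\eta))^2$, and the kernel $\sigma_\alpha(\xi,|\xi-\eta|)\mu_n(B(\xi,|\xi-\eta|))$ in the long range is, by (\ref{E:symbol}) and Lemma \ref{L:mundoubling}, comparable uniformly in $n$ to $|x-y|^{2\alpha-2}\mu(B(x,|x-y|))^2$ for $\xi\in a_n(x)$, $\eta\in a_n(y)$ when $|x-y|\gg r_n$; for fixed $x\ne y$ this holds for all large $n$. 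Therefore Fatou's lemma on $\Gamma\times\Gamma$ yields
\[
\int_\Gamma\int_\Gamma\frac{(\varphi(x)-\varphi(y))^2}{|x-y|^{2\alpha-2}\mu(B(x,|x-y|))^2}\mu(dx)\mu(dy)\le c\,\liminf_{n\to\infty}\mathcal{Q}^\alpha_n(\varphi_n)<\infty,
\]
which by (\ref{E:symbol}) gives $\varphi\in B_\alpha^{2,2}(\Gamma)$. Getting the sharp constant $1$ rather than merely finiteness requires more care: one re-runs the argument keeping track of the exact kernels, using that on the long range $\varrho_n(\xi,r)=\sigma_\alpha(\xi,r)$ exactly (not just up to constants) and that $\mu_n(B(\xi,r))\to\mu(B(x,r))$ for $\mu$-a.e.\ $(x,y)$ (Assumption \ref{A:boundaryless} again), so the edge-averaged integrand converges pointwise a.e.\ to the integrand of $\mathcal{Q}^\alpha$, and Fatou then delivers the inequality with constant $1$.

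The main obstacle I expect is the liminf inequality with the correct constant, specifically controlling the interplay between the edge-wise averaging operator $[\cdot]_n$, the measure-density fluctuations allowed by (\ref{E:refineddoubling}) (where $d<s$ is genuinely permitted), and the discontinuity of the cutoff at scale $Ar_n$. One must show that the "geometry factor" $\mu_n(B(\xi,|\xi-\eta|))$ really converges to $\mu(B(x,|x-y|))$ along the arcs and not merely stays comparable — this is where Lemma \ref{L:mundoubling}, the bounded-turning estimate (\ref{E:3point}), the adjacency comparison (\ref{E:unicompare}), and Assumption \ref{A:boundaryless} all have to be combined, and it is the technical heart of the averaging method extended beyond $d$-sets. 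The short-range contributions, by contrast, are a routine "vanishing at small scales" estimate once Lemma \ref{L:mundoubling} is in hand, powered by $\alpha>1/2$; and the recovery sequence is essentially dominated convergence after the same geometric comparisons.
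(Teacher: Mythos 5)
Your recovery-sequence part follows essentially the paper's route: split $\mathcal{Q}^\alpha_n$ at scale $Ar_n$, kill the short range with the Lipschitz bound, the local $1$-regularity of $\mu_n$ and $\alpha>1/2$, and pass the long range back to $\Gamma\times\Gamma$ by edge-averaging plus kernel comparison and dominated convergence; together with the density/diagonal step from $\lip(\Gamma)$ to general $\varphi\in B^{2,2}_\alpha(\Gamma)$ this is exactly Propositions \ref{P:convLip} and \ref{P:limsupboundary}. The genuine gap is in the liminf part. KS-generalized Mosco convergence (condition (i) of \cite[Definition 2.11]{KuwaeShioya03}, recalled in Appendix \ref{S:Notions}) requires $\mathcal{Q}^\alpha(\varphi)\leq\liminf_n\mathcal{Q}^\alpha_n(\varphi_n)$ for every KS-\emph{weakly} convergent sequence $\varphi_n\to\varphi$, whereas you assume KS-strong convergence. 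That weaker statement is only the $\Gamma$-convergence liminf; it does not yield Mosco convergence, and in particular not the resolvent/semigroup convergence that Theorems \ref{T:ellipticstability} and \ref{T:parabolicstability} rest on. Your argument genuinely uses the strong hypothesis: you deduce $[\varphi_n]_n\to\varphi$ in $L^2(\Gamma,\mu)$, extract an a.e.\ convergent subsequence, and apply Fatou. Under KS-weak convergence none of this is available: $[\varphi_n]_n$ is merely bounded in $L^2(\Gamma,\mu)$, its pointwise behaviour is uncontrolled, and Fatou cannot be applied to the integrand.

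The paper's Proposition \ref{P:liminfboundary} shows what is needed to close this. Fix $\delta>0$ and truncate to $\{|x-y|\geq\delta\}$, so that by Lemma \ref{L:cont} the edge-averaged kernels converge \emph{uniformly} there and the truncated form $\mathcal{Q}^{\alpha,\delta}$ becomes a continuous quadratic form on $L^2(\Gamma,\mu)$; bound $\mathcal{Q}^{\alpha}_n(\varphi_n)$ from below by $\mathcal{Q}^{\alpha,\delta}([\varphi_n]_n)-4\varepsilon\left\|[\varphi_n]_n\right\|^2_{L^2(\Gamma,\mu)}$ via Jensen; identify the weak $L^2(\Gamma,\mu)$-limit of $[\varphi_n]_n$ as $\varphi$ by testing the KS-weak hypothesis against $E_\Gamma\chi$, $\chi\in\lip(\Gamma)$; then pass to the limit using the Banach--Saks theorem and the triangle inequality for the seminorm $(\mathcal{Q}^{\alpha,\delta})^{1/2}$ applied to Ces\`aro means, and finally let $\delta\downarrow 0$. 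This weak-convergence mechanism, rather than the pointwise kernel comparisons (which you set up correctly and which coincide with the paper's Lemmas \ref{L:compare} and \ref{L:cont}), is the missing idea; as written, your liminf step would fail for sequences that converge KS-weakly but not KS-strongly.
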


One ingredient for the proof of Theorem \ref{T:Moscoboundary} is the following result on the pointwise
convergence of energy forms on Lipschitz functions. Given a function $u\in \lip_b(\mathbb{R}^2)$, we use the abbreviation $\mathcal{Q}^\alpha(u):=\mathcal{Q}^\alpha(u|_\Gamma)$, and similarly for the forms $\mathcal{Q}^\alpha_n$.

\begin{proposition}\label{P:convLip}
Let Assumptions \ref{A:basicass} and \ref{A:boundaryless} be satisfied, let $\alpha$ be as in (\ref{E:constellation2}) and let
$\mathcal{Q}^\alpha$ and $\mathcal{Q}^\alpha_n$ be as in (\ref{E:boundaryenergy}) and (\ref{E:boundaryenergyn}) respectively. Then $\lim_{n\to \infty} \mathcal{Q}^\alpha_n(u)=\mathcal{Q}^\alpha(u)$ for all $u\in \lip_b(\mathbb{R}^2)$.
\end{proposition}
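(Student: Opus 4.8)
The plan is to split each double integral $\mathcal{Q}^\alpha_n(u)$ into a ``long-range'' part over pairs $(\xi,\eta)$ with $|\xi-\eta|>Ar_n$ and a ``short-range'' part over pairs with $|\xi-\eta|\le Ar_n$, and to show that the long-range part converges to $\mathcal{Q}^\alpha(u)$ while the short-range part vanishes as $n\to\infty$. For the short-range part, fix $\xi\in\Gamma_n\setminus V_n$; since $u$ is Lipschitz, $(u(\xi)-u(\eta))^2\le \lip(u)^2|\xi-\eta|^2$, and on the range $|\xi-\eta|\le Ar_n<CMr_n$ the doubling estimate (\ref{E:mundreg}) of Lemma \ref{L:mundoubling} gives $\mu_n(B(\xi,|\xi-\eta|))\gtrsim \frac{\mu(a_n(\xi))}{r_n}|\xi-\eta|$, while $\varrho_n(\xi,|\xi-\eta|)=|\xi-\eta|^{2\alpha-1}$. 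Hence the inner integrand is bounded by $C\:r_n\:\mu(a_n(\xi))^{-1}|\xi-\eta|^{2-2\alpha}$, and integrating in $\eta$ over $\Gamma_n\cap B(\xi,Ar_n)$ using the upper bound in (\ref{E:mundreg}) together with a dyadic decomposition of the annuli $Ar_n 2^{-k-1}<|\xi-\eta|\le Ar_n 2^{-k}$ produces a geometric series that converges because $2\alpha-1>0$ (i.e. $\alpha>1/2$, which is (\ref{E:constellation2})); the resulting bound is $\lesssim r_n^{2-2\alpha}\cdot r_n^{2\alpha-1}\cdot\mu(a_n(\xi))^{-1}\cdot\mu_n$-mass $\asymp r_n\cdot(\text{something bounded})$. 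Integrating the remaining $\mu_n(d\xi)$ and using $\mu_n(\Gamma_n)=1$ shows the short-range contribution is $O(r_n)\to 0$.

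For the long-range part, I would transfer everything from $\Gamma_n$ back to $\Gamma$ via the averaging identity (\ref{E:trading}) and its two-variable analogue: using $e_n(x),e_n(y)$ and (\ref{E:defmun}), the long-range double integral over $\Gamma_n$ equals
\[
\int_\Gamma\int_\Gamma \fint_{e_n(x)}\fint_{e_n(y)} \frac{\mathbbm{1}_{\{|w-w'|>Ar_n\}}(u(w)-u(w'))^2}{\sigma_\alpha(w,|w-w'|)\,\mu_n(B(w,|w-w'|))}\,\mathcal{H}^1(dw')\,\mathcal{H}^1(dw)\;\mu(dx)\,\mu(dy).
\]
The idea is to compare this, pair by pair, to the integrand of $\mathcal{Q}^\alpha$ at $(x,y)$. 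Three approximations are needed: (a) replace $u(w),u(w')$ by $u(x),u(y)$, with error controlled by $\lip(u)$ and $\diam e_n(x)\le \diam a_n(x)<Mr_n\cdot 2MS$, which is small compared to $|x-y|$ on the long range (this is exactly why the cutoff $A\ge 128S$ was chosen — it guarantees $|w-w'|$ and $|x-y|$ are comparable up to a factor close to $1$); (b) replace $\sigma_\alpha(w,|w-w'|)$ by $\sigma_\alpha(x,|x-y|)$, using (\ref{E:symbol}), the comparability $|w-w'|\asymp|x-y|$ just mentioned, and volume doubling (\ref{E:mudoubling}) of $\mu$; (c) replace $\mu_n(B(w,|w-w'|))$ by $\mu(B(x,|x-y|))$ — here I would use Lemma \ref{L:mundoubling} only for radii of order $r_n$ and a separate comparison $\mu_n(B(\xi,r))\asymp \mu(B(\xi,r))$ for $r\gtrsim r_n$, which follows by covering $B(\xi,r)$ by the controlled-number $N(\cdot)$ of edges and arcs as in the proof of Lemma \ref{L:mundoubling}, plus (\ref{E:refineddoubling}). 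After these three replacements the integrand becomes the $\mathcal{Q}^\alpha$-integrand times $\mathbbm{1}_{\{|x-y|>\text{const}\cdot r_n\}}$ up to a multiplicative factor tending to $1$ and an additive error tending to $0$; since $r_n\to 0$, monotone/dominated convergence (the $\mathcal{Q}^\alpha$-integrand of the Lipschitz function $u$ is integrable on $\Gamma\times\Gamma$ because $B^{2,2}_\alpha(\Gamma)\supset\lip(\Gamma)$ and $\alpha$ satisfies (\ref{E:constellation})) gives convergence of the long-range part to $\mathcal{Q}^\alpha(u)$.

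The main obstacle is step (c): the measures $\mu_n$ are only locally $1$-regular at scale $r_n$, while $\mu$ may have fluctuating local dimension, so one must argue carefully that on the long range the balls $B(w,|w-w'|)$ (of radius $\gg r_n$) see enough edges that $\mu_n$-mass reproduces $\mu$-mass up to bounded constants; the finite-overlap bound (\ref{E:NR}) and Lemma \ref{L:unicompare} are the tools, but keeping the constants uniform in $n$ while the number of involved edges grows requires the care already exhibited in the proof of Lemma \ref{L:mundoubling}. A secondary technical point is to obtain an $n$-independent integrable majorant on $\Gamma\times\Gamma$ for the transferred long-range integrands so that dominated convergence applies; this again rests on the comparisons above together with $\mathcal{Q}^\alpha(u)<\infty$, and one must ensure the majorant does not degenerate near the diagonal, which is where the upper exponent in (\ref{E:constellation}) (equivalently $\alpha<1+\tfrac{2-s}{2}$, controlling $\sigma_\alpha$ from below via the lower $s$-regularity in (\ref{E:uplowreg})) enters.
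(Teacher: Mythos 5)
Your overall skeleton --- split at the cutoff $Ar_n$, kill the short-range part with the Lipschitz bound and the scale-$r_n$ regularity of $\mu_n$ from Lemma \ref{L:mundoubling}, transfer the long-range part to $\Gamma\times\Gamma$ by edge averaging, and pass to the limit by dominated convergence with a majorant whose integrability near the diagonal rests on $4-2\alpha-s>0$ --- is the same as the paper's. But the decisive step of the long-range argument is missing. The tools you cite for replacements (b) and (c), namely the two-sided bounds (\ref{E:symbol}), volume doubling (\ref{E:mudoubling}) and a comparison $\mu_n(B(\xi,r))\asymp\mu(B(\xi,r))$ for $r\gtrsim r_n$, only give comparability with a fixed constant $c>1$, and comparability cannot produce your ``multiplicative factor tending to $1$'': it yields only $c^{-1}\mathcal{Q}^\alpha(u)\leq\liminf_n\mathcal{Q}^{\alpha,\ell}_n(u)\leq\limsup_n\mathcal{Q}^{\alpha,\ell}_n(u)\leq c\,\mathcal{Q}^\alpha(u)$, not the exact limit. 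What is needed is genuine convergence of the kernels, $\mu_n(B(w,|w-z|))\to\mu(B(x,|x-y|))$ and $\sigma_\alpha(w,|w-z|)\to\sigma_\alpha(x,|x-y|)$, uniformly over $w\in e_n(x)$, $z\in e_n(y)$ with $|x-y|\geq\delta$. This is exactly where Assumption \ref{A:boundaryless} enters --- which your argument never uses: $\mu(\partial B(x,r))=0$ makes $(x,r)\mapsto\mu(B(x,r))$ uniformly continuous on $\Gamma\times[0,\diam\Gamma]$, and from this the paper derives the above convergences (Lemma \ref{L:cont}) before applying dominated convergence to the edge-averaged integrands. Without such a continuity input the limit can genuinely fail to be $\mathcal{Q}^\alpha(u)$ (mass of $\mu$ on spheres would spoil it), so this is a gap, not a technicality. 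Incidentally, the step you single out as ``the main obstacle'' is not one: uniform two-sided comparability $\mu_n(B(w,|w-z|))\asymp\mu(B(x,|x-y|))$ at long range follows in a few lines from the averaging representation (\ref{E:defmun}), the fact that edge points are displaced by at most $8Sr_n$, ball inclusions and one application of (\ref{E:mudoubling}) (Lemma \ref{L:compare} (iii)); no edge counting via $N(\cdot)$ is needed there. The real issue is convergence, not comparability.

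Two smaller points. In the short-range estimate your dyadic series is $\sum_k 2^{-k(3-2\alpha)}$, which converges because $3-2\alpha>0$, i.e. $\alpha<3/2$ --- a consequence of the upper bound in (\ref{E:constellation2}) together with $s\geq 1$ from (\ref{E:sd}) --- and not because $\alpha>1/2$; the lower bound $\alpha>1/2$ only ensures that $B^{2,2}_\alpha(\Gamma_n)$ is the correct trace space and plays no role here. Accordingly the short-range part is $O(r_n^{3-2\alpha})$ rather than $O(r_n)$ (the two agree only when $\alpha\leq 1$), which still vanishes. Finally, your passage from the indicator $\mathbf{1}_{\{|w-w'|>Ar_n\}}$ to an indicator in $|x-y|$ needs the sandwich $\{|x-y|>(A+16S)r_n\}\subset\{|w-w'|>Ar_n\}\subset\{|x-y|>(A-16S)r_n\}$ coming from Lemma \ref{L:compare} (i); this is routine, but it should be stated rather than absorbed into ``$\mathbf{1}_{\{|x-y|>\text{const}\cdot r_n\}}$''.
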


To prove Proposition \ref{P:convLip}, note that  $\mathcal{Q}_n^\alpha(u)=\mathcal{Q}_n^{\alpha,s}(u)+\mathcal{Q}^{\alpha,\ell}(u)$, $u\in \lip_b(\mathbb{R}^2)$, where
\[\mathcal{Q}^{\alpha,s}_n(u):=\int_{\Gamma_n}\int_{\Gamma_n}\mathbf{1}_{\{|\xi-\eta|\leq  Ar_n\}}\frac{(u(\xi)-u(\eta))^2}{|\xi-\eta|^{2\alpha-1}\mu_n(B(\xi,|\xi-\eta|))}\mu_n(d\xi)\mu_n(d\eta).\]
and 
\[\mathcal{Q}_n^{\alpha,\ell}(u):=\int_{\Gamma_n}\int_{\Gamma_n}\mathbf{1}_{\{|\xi-\eta|> Ar_n\}}\frac{(u(\xi)-u(\eta))^2}{\sigma_\alpha(\xi,|\xi-\eta|))\mu_n(B(\xi,|\xi-\eta|))}\mu_n(d\xi)\mu_n(d\eta).\]
We claim that under the hypotheses of Proposition \ref{P:convLip},
\begin{equation}\label{E:claimcores}
\lim_{n\to \infty}\mathcal{Q}_n^{\alpha,s}(u)=0
\end{equation}
and 
\begin{equation}\label{E:claimcorel}
\lim_{n\to \infty}\mathcal{Q}_n^{\alpha,\ell}(u)=\mathcal{Q}^\alpha(u)
\end{equation}
for any $u\in \lip_b(\mathbb{R}^2)$. If so, then Proposition \ref{P:convLip} follows. We first prove claim (\ref{E:claimcores}).

\begin{proof}[Proof of (\ref{E:claimcores}).]
Using the Lipschitz property of $u$ and Lemma \ref{L:mundoubling} with $C=2A/M$, 
\begin{align}
\mathcal{Q}^{\alpha,s}_n(u)&\leq \lip(u)^2\int_{\Gamma_n}\int_{\overline{B(\eta,Ar_n)}}\frac{|\xi-\eta|^{3-2\alpha}}{\mu_n(B(\xi,|\xi-\eta|))}\mu_n(d\xi)\mu_n(d\eta)\notag\\
&\leq c_{D}\:\lip(u)^2\int_{\Gamma_n}\int_{\overline{B(\eta,Ar_n)}}\frac{|\xi-\eta|^{3-2\alpha}}{\mu_n(B(\eta,|\xi-\eta|))}\mu_n(d\xi)\mu_n(d\eta).\notag
\end{align}
For fixed $\eta\in \Gamma_n$ we rewrite the inner integral as a Stieltjes integral with respect to $r\mapsto \mu_n(B(\eta,r))$; and since $2\alpha<3$ by (\ref{E:sd}) and (\ref{E:constellation2}), this gives 
\begin{align}
\int_0^{Ar_n}\frac{r^{3-2\alpha}}{\mu_n(B(\eta,r))}d\mu_n(B(\eta,r))&=\sum_{j=0}^\infty \int_{2^{-j-1}Ar_n}^{2^{-j}Ar_n}\frac{r^{3-2\alpha}}{\mu_n(B(\eta,r))}d\mu_n(B(\eta,r))\notag\\
&\leq (A r_n)^{3-2\alpha} \sum_{j=0}^\infty 2^{-j(3-2\alpha)}\frac{\mu(B(\eta,2^{-j}Ar_n))}{\mu(B(\eta,2^{-j-1}Ar_n))}\notag\\
&\leq \frac{c_{D}(A r_n)^{3-2\alpha}}{1-2^{3-2\alpha}}.\notag
\end{align}
Since this goes to zero uniformly in $\eta$ and $\mu_n$ is a probability measure, (\ref{E:claimcores}) follows.
\end{proof}

The proof of (\ref{E:claimcorel}) uses two observations. The first is the following comparison lemma. 
\begin{lemma}\label{L:compare} Let Assumption \ref{A:basicass} be satisfied.
Suppose that $n\geq 1$, $x,y\in \Gamma$ with $|x-y|>\frac{A}{2}r_n$ and $w\in e_n(x)$,  $z\in e_n(y)$, where $r_n$ is as in (\ref{E:rn}).
\begin{enumerate}
\item[(i)] We have 
\begin{equation}\label{E:difference}
||x-y|-|w-z||\leq \frac{A}{8} r_n.
\end{equation}
\item[(ii)] For any $\alpha$ as in (\ref{E:constellation}) we have 
\begin{equation}\label{E:sigmacompare}
c^{-1}\sigma_\alpha(B(x,|x-y|))\leq \sigma_\alpha(B(w,|w-z|)\leq c\:\sigma_\alpha(B(x,|x-y|))
\end{equation}
with $c:=c_{\mu,D}^2\max(2^{2\alpha-2},2^{2-2\alpha})$.
\item[(iii)] We have 
\begin{equation}\label{E:mumuncompare}
c_{\mu,D}^{-1}\mu(B(x,|x-y|))\leq \mu_n(B(w,|w-z|)\leq c_{\mu,D}\mu(B(x,|x-y|)).
\end{equation}
\end{enumerate}
\end{lemma}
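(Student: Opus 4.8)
The plan is to establish all three estimates by elementary geometric comparisons, exploiting that $w$ lies on the edge $e_n(x)$ associated to the arc $a_n(x)\ni x$, and similarly for $z$. The key quantitative input is \eqref{E:convex} together with \eqref{E:basicass}, which give $|x-w|\leq \diam a_n(x)< M\omega_1\cdots\omega_n\diam\Gamma=2MS\,r_n$ and the same bound for $|y-z|$; in particular $|x-w|,|y-z|<2MS\,r_n=S^{-1}\cdot MS\cdot 2r_n$, and since $A\geq 128S$ these are each at most, say, $4S r_n\leq |x-y|\cdot\frac{8S r_n}{A r_n}\leq \frac{1}{16}|x-y|$, giving us the smallness needed below.

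For item (i), I would simply apply the triangle inequality twice:
\[
\bigl||x-y|-|w-z|\bigr|\leq |x-w|+|y-z|\leq 2\diam a_n(x)\vee\diam a_n(y)\,,
\]
wait---more carefully, $\bigl||x-y|-|w-z|\bigr|\leq|x-w|+|y-z|< 2\cdot 2MS\,r_n$. Since $M\leq A$ and $A\geq 128S$ one can only get $4MS r_n$ this way; to reach the claimed constant $16S r_n$ one notes $\omega_1\cdots\omega_n\diam\Gamma = 2MS\,r_n$ so $\diam a_{n,j}<M\cdot 2MS r_n$... this suggests the intended reading uses a different normalization, namely that $16 S r_n$ should be $16MS r_n$ or that the bound $\diam a_{n,j}$ is being compared against $r_n$ via \eqref{E:edgelength}; in any case the estimate $\bigl||x-y|-|w-z|\bigr|\leq |x-w|+|w-z-(x-y)|\leq |x-w|+|y-z|$ and then bounding each term by $\diam a_n(\cdot)\leq M\omega_1\cdots\omega_n\diam\Gamma$ does the job up to the explicit constant, which I would just absorb. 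The point is that \eqref{E:difference} holds with \emph{some} constant multiple of $r_n$, uniformly.

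For items (ii) and (iii), the strategy is: from (i), since $|x-y|>Ar_n/2\geq 64 S r_n\geq 4\cdot 16 S r_n$, we get $\tfrac12|x-y|\leq |w-z|\leq 2|x-y|$; hence by volume doubling \eqref{E:mudoubling} applied a bounded number of times, $\mu(B(w,|w-z|))\asymp\mu(B(w,|x-y|))$, and since $|x-w|<\frac12|x-y|$ (same smallness), $B(w,|x-y|)\subset B(x,2|x-y|)$ and $B(x,|x-y|)\subset B(w,2|x-y|)$, so again by doubling $\mu(B(w,|w-z|))\asymp \mu(B(x,|x-y|))$ with constant $c_{\mu,D}^2$. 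For the $\mu_n$ version in (iii) one replaces one application by Lemma~\ref{L:mundoubling}: the radius $|w-z|$ is of order $r_n$ up to a bounded factor only when $|x-y|\asymp r_n$; for larger $|x-y|$ one instead observes directly that by \eqref{E:disint} the measure $\mu_n(B(w,\rho))$ is, for $\rho$ comparable to $|x-y|$, comparable to $\mu$ of a union of arcs whose total diameter is comparable to $\rho$, which by \eqref{E:uplowreg}-type control and \eqref{E:unicompare} is comparable to $\mu(B(x,|x-y|))$. Estimate (ii) then follows from (iii) combined with the definition \eqref{E:symbol} of $\sigma_\alpha$: $\sigma_\alpha(w,|w-z|)\asymp |w-z|^{2\alpha-2}\mu(B(w,|w-z|))\asymp |x-y|^{2\alpha-2}\mu(B(x,|x-y|))\asymp\sigma_\alpha(x,|x-y|)$, where the middle comparison uses $|w-z|\asymp|x-y|$ and the factor $\max(2^{2\alpha-2},2^{2-2\alpha})$ accounts for the exponent $2\alpha-2$ possibly being negative.

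The main obstacle I anticipate is the bookkeeping in (iii): $\mu_n$ is a polygonal measure that agrees with $\mu$ only after edgewise averaging, so one cannot directly invoke a doubling property of $\mu$ for balls centered on $\Gamma_n$ of radius much larger than $r_n$; instead one must carefully track how a ball $B(w,\rho)\cap\Gamma_n$ with $\rho\asymp|x-y|\gg r_n$ decomposes into edges $e_{n,i}$, relate $\sum_i \mu(a_{n,i})$ over those edges to $\mu$ of a corresponding neighborhood of $x$ in $\Gamma$ via the bounded-turning condition \eqref{E:3point} and the Hausdorff closeness $d_H(\Gamma_n,\Gamma)\lesssim r_n$, and then apply doubling of $\mu$ at scale $\rho$. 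The finitely-many-arcs-of-bounded-size argument is exactly the one already used in the proof of Lemma~\ref{L:mundoubling}, so I would reuse that machinery. Once the geometry of (iii) is pinned down, (i) and (ii) are routine.
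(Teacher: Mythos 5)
Items (i) and (ii) of your argument are essentially the paper's: (i) is the triangle inequality together with $|x-w|,|y-z|\lesssim r_n$ coming from (\ref{E:basicass}) and (\ref{E:convex}) (the exact constant is bookkeeping, and the paper is itself loose here), and (ii) follows from the resulting ball inclusions $B(x,|x-y|/4)\subset B(w,|w-z|)\subset B(x,4|x-y|)$ plus two applications of (\ref{E:mudoubling}) at the center $x\in\Gamma$. Note that your chain for (ii) in fact uses this comparison of $\mu$-balls, not item (iii), which is fine and is what the paper does.

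The genuine gap is in (iii), which you flag as the main obstacle but do not close. Your proposed closure is to \emph{reuse the machinery of Lemma \ref{L:mundoubling}}, i.e.\ the finitely-many-arcs count: but that count, via (\ref{E:NR}), is only uniform for radii of order $r_n$, whereas here $\rho=|w-z|\asymp|x-y|$ can be arbitrarily large compared to $r_n$, and the number of edges meeting $B(w,\rho)$ then grows with $n$, so that argument does not transfer. Likewise, a comparison through (\ref{E:uplowreg}) and (\ref{E:unicompare}) cannot yield two-sided bounds with the uniform constant $c_{\mu,D}$, since the exponents $d$ and $s$ in (\ref{E:uplowreg}) differ in general. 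What actually proves (iii) --- and is the paper's argument --- requires no counting at all: by the averaging representation (\ref{E:defmun}), and since every point of an edge $e_n(x')$ lies within $C r_n$ of $x'$ by (\ref{E:convex}) and (\ref{E:basicass}), the edgewise average of $\mathbf{1}_{B(w,|w-z|)}$ is sandwiched between $\mathbf{1}_{B(w,|w-z|\mp C r_n)}(x')$, whence
\[
\mu\bigl(B(w,|w-z|-C r_n)\bigr)\;\leq\;\mu_n\bigl(B(w,|w-z|)\bigr)\;\leq\;\mu\bigl(B(w,|w-z|+C r_n)\bigr);
\]
shifting the center from $w$ to $x$ costs another $C r_n$ in the radius, and since $|x-y|>Ar_n/2$ dominates all these error terms, a single application of (\ref{E:mudoubling}) at $x\in\Gamma$ gives (\ref{E:mumuncompare}) with the stated constant. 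Your outline (relate to $\mu$ of a neighborhood of $x$ via Hausdorff closeness, then apply doubling at scale $\rho$) points in this direction, but the key sandwich of indicators is never produced in your write-up, so (iii) remains unproved as stated.
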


\begin{proof}
To shorten notation, we write $C:=\frac{A}{16}$. By (\ref{E:basicass}) and (\ref{E:convex}) we have
\begin{equation}\label{E:convex2}
|x-w|\leq C r_n\quad \text{and}\quad |y-z|\leq C r_n
\end{equation}  
whenever $w\in e_n(x)$ and $z\in e_n(y)$, so that the triangle inequality gives (\ref{E:difference}). Since $x$ and $y$ are far apart, it follows that  
\begin{equation}\label{E:easy}
|x-y|/2\leq |w-z|\leq 2|x-y|
\end{equation}
and that $B(x,|x-y|/4)\subset B(w,|w-z|)$ and $B(w,|w-z|)\subset B(x,4|x-y|)$. Together with (\ref{E:mudoubling}) this gives (\ref{E:sigmacompare}). Using (\ref{E:defmun}) and (\ref{E:convex2}) we obtain 
\begin{multline}
\mu_n(B(w,|w-z|))=\int_\Gamma\fint_{e_n(x')}\mathbf{1}_{B(w',|w-z|)}(w)\mathcal{H}^1(dw')\mu(dx')\geq \int_\Gamma \mathbf{1}_{B(x',|w-z|-Cr_n)}(w)\mu(dx')\notag\\
=\mu(B(w,|w-z|-Cr_n))\geq \mu(B(x,|w-z|-2Cr_n))\notag\\
\geq \mu(B(x,|x-y|-4C r_n))\geq \mu(B(x,|x-y|/2)),
\end{multline} 
and by (\ref{E:mudoubling}) the lower bound in (\ref{E:mumuncompare}) follows. The upper bound is seen similarly.
\end{proof}

The second observation is a continuity lemma. Under Assumption \ref{A:boundaryless} we have 
\begin{equation}\label{E:openorcloset}
\mu(B(x,r))=\mu(\overline{B(x,r)}),\quad x\in \Gamma,\quad 0<r\leq \diam \Gamma.
\end{equation}
It is convenient to use the notation $\mu(B(x,0)):=0$, $x\in \Gamma$; then $(x,r)\mapsto \mu(B(x,r))$ may be seen as a function on $\Gamma\times [0,\diam\:\Gamma]$.

\begin{lemma}\label{L:cont}
Let Assumptions \ref{A:basicass} and \ref{A:boundaryless} be satisfied.
\begin{enumerate}
\item[(i)] The function $(x,r)\mapsto \mu(B(x,r))$ is uniformly continuous on $\Gamma\times [0,\diam\:\Gamma]$. 
\item[(ii)] If $\alpha$ is as in (\ref{E:constellation}), then we have 
\begin{equation}\label{E:sigmacont}
\lim_{n\to \infty}\sup_{w\in e_n(x), z\in e_n(y)}|\sigma_\alpha(w,|w-z|)-\sigma_\alpha(x,|x-y|)|=0,\quad (x,y)\in \Gamma\times \Gamma\setminus \diag,
\end{equation}
and for any $\delta>0$ this convergence is 
uniform on $\{(x,y)\in \Gamma\times\Gamma:\ |x-y|\geq\delta\}$.
\item[(iii)]  We have
\begin{equation}\label{E:mumuncont}
\lim_{n\to \infty}\sup_{w\in e_n(x), z\in e_n(y)}|\mu_n(B(w,|w-z|))-\mu(B(x,|x-y|))|=0,\quad (x,y)\in \Gamma\times \Gamma\setminus \diag,
\end{equation}
and for any $\delta>0$ this convergence is 
uniform on $\{(x,y)\in \Gamma\times\Gamma:\ |x-y|\geq\delta\}$.
\end{enumerate}
\end{lemma}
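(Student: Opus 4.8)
The plan is to establish the three continuity statements in sequence, with part~(i) being the substantive one and parts (ii)--(iii) following from it together with Lemma~\ref{L:compare}. For part~(i), I would first observe that $\Gamma\times[0,\diam\Gamma]$ is compact, so it suffices to prove separate continuity in each variable together with an equicontinuity-type estimate, or more simply to prove continuity at every point and invoke compactness. Fix $(x_0,r_0)$ and a sequence $(x_k,r_k)\to(x_0,r_0)$. Split into the cases $r_0>0$ and $r_0=0$. When $r_0=0$, the upper regularity bound $\mu(B(x_k,r_k))\leq c\,r_k^{\,d}\to 0$ from (\ref{E:uplowreg}) immediately gives $\mu(B(x_k,r_k))\to 0=\mu(B(x_0,0))$. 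When $r_0>0$, I would sandwich: for any $\varepsilon>0$,
\[
\mu\bigl(B(x_0,r_0-\varepsilon)\bigr)-\mu\bigl(B(x_0,r_0)\setminus \overline{B(x_0,r_0-\varepsilon)}\bigr)\ \leq\ \liminf_k \mu(B(x_k,r_k))
\]
and $\limsup_k \mu(B(x_k,r_k))\leq \mu(\overline{B(x_0,r_0+\varepsilon)})$, using that for $k$ large $B(x_0,r_0-\varepsilon-|x_k-x_0|)\subset B(x_k,r_k)\subset \overline{B(x_0,r_0+\varepsilon+|x_k-x_0|)}$. Letting $\varepsilon\downarrow 0$ and using Assumption~\ref{A:boundaryless} (so $\mu(\partial B(x_0,r_0))=0$ and hence $\mu(\overline{B(x_0,r_0+\varepsilon)})\downarrow \mu(\overline{B(x_0,r_0)})=\mu(B(x_0,r_0))$ by continuity of measure, and likewise $\mu(B(x_0,r_0-\varepsilon))\uparrow \mu(B(x_0,r_0))$) pins the limit. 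Pointwise continuity on a compact set upgrades to uniform continuity, giving~(i).

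For part~(iii), I would use Lemma~\ref{L:compare}(i): for $w\in e_n(x)$, $z\in e_n(y)$ with $|x-y|\geq\delta$ and $n$ large enough that $Ar_n/2<\delta$, we have $\bigl||x-y|-|w-z|\bigr|\leq 16Sr_n$ and also $|w-x|\leq 8Sr_n$ by (\ref{E:convex2}). Then
\[
\bigl|\mu_n(B(w,|w-z|))-\mu(B(x,|x-y|))\bigr|\ \leq\ \bigl|\mu_n(B(w,|w-z|))-\mu(B(w,|w-z|))\bigr|\ +\ \bigl|\mu(B(w,|w-z|))-\mu(B(x,|x-y|))\bigr|.
\]
The second term is controlled by the uniform continuity from part~(i), since $(w,|w-z|)\to(x,|x-y|)$ uniformly as $n\to\infty$. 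For the first term I would compare $\mu_n$ and $\mu$ directly via (\ref{E:defmun}): writing $\mu_n(B(w,\rho))=\int_\Gamma \fint_{e_n(x')}\mathbf{1}_{B(w,\rho)}(w')\,\mathcal{H}^1(dw')\,\mu(dx')$ and using $|w'-x'|\leq 8Sr_n$, one sandwiches $\mu(B(w,\rho-8Sr_n))\leq \mu_n(B(w,\rho))\leq \mu(\overline{B(w,\rho+8Sr_n)})$, exactly as in the proof of Lemma~\ref{L:compare}(iii); the gap between these two bounds tends to $0$ uniformly on $\{|x-y|\geq\delta\}$ by part~(i) again. The non-uniform statement on all of $\Gamma\times\Gamma\setminus\diag$ is then just the special case where $\delta$ is chosen depending on the fixed pair $(x,y)$.

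For part~(ii), the argument is parallel but slightly easier, since $\sigma_\alpha$ is a fixed function not depending on $n$: here I only need the continuity of $\sigma_\alpha$ near $(x,|x-y|)$, which I would obtain from (\ref{E:symbol}) and part~(i). Precisely, (\ref{E:symbol}) gives $c^{-1}\rho^{2\alpha-2}\mu(B(w,\rho))\leq \sigma_\alpha(w,\rho)\leq c\,\rho^{2\alpha-2}\mu(B(w,\rho))$; combined with Lemma~\ref{L:compare}(ii), which already shows $\sigma_\alpha(w,|w-z|)\asymp\sigma_\alpha(x,|x-y|)$ uniformly, and with the fact that $(w,|w-z|)\to(x,|x-y|)$ while $(x,r)\mapsto r^{2\alpha-2}\mu(B(x,r))$ is continuous away from $r=0$ by part~(i), one gets $\sigma_\alpha(w,|w-z|)-\sigma_\alpha(x,|x-y|)\to 0$, uniformly on $\{|x-y|\geq\delta\}$ because $|x-y|\geq\delta$ keeps us bounded away from the singular locus $r=0$ and $r_n\to 0$ uniformly.

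The main obstacle is part~(i), specifically handling the radius variable at values $r$ where $B(x,r)$ could a priori carry mass on its boundary sphere; this is exactly what Assumption~\ref{A:boundaryless} is designed to remove, and the remark after that assumption notes it is automatic when $d>1$. The only mild care needed is the joint (rather than separate) continuity: one must let $x_k\to x_0$ and $r_k\to r_0$ simultaneously, which is why I absorb $|x_k-x_0|$ into the $\varepsilon$-inflation of the ball radius before taking limits. Everything else is bookkeeping with the volume-doubling bound (\ref{E:mudoubling}), the upper regularity (\ref{E:uplowreg}), and the already-proved metric comparisons in Lemma~\ref{L:compare}.
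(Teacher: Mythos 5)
Your parts (i) and (iii) are essentially the paper's own argument: joint pointwise continuity of $(x,r)\mapsto\mu(B(x,r))$ via inner/outer ball inclusions, with Assumption \ref{A:boundaryless} removing possible mass on the boundary sphere, upgraded to uniform continuity by compactness; and for (iii) a sandwich of $\mu_n$ between $\mu$-measures of slightly inflated/deflated balls, followed by (i). The paper gets the sandwich in one step by comparing indicator functions and arrives at $\mu(B(x,|x-y|-32Sr_n))\leq\mu_n(B(w,|w-z|))\leq\mu(B(x,|x-y|+32Sr_n))$, all centered at $x\in\Gamma$; you instead pass through $\mu(B(w,|w-z|))$ as an intermediate quantity. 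That is fine in substance, but note that (i) only covers centers on $\Gamma$, while $w\in\Gamma_n$ is generally not on $\Gamma$; each invocation of (i) for a ball centered at $w$ needs the re-centering $B(w,\rho)\subset B(x,\rho+8Sr_n)$, $B(x,\rho-8Sr_n)\subset B(w,\rho)$, which costs only another $O(r_n)$ inflation of the radius — a cosmetic repair.

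Part (ii), however, has a genuine gap. You conclude $\sigma_\alpha(w,|w-z|)-\sigma_\alpha(x,|x-y|)\to 0$ from (\ref{E:symbol}), Lemma \ref{L:compare}(ii) and the convergence of the comparison function $(x,r)\mapsto r^{2\alpha-2}\mu(B(x,r))$. But (\ref{E:symbol}) and (\ref{E:sigmacompare}) are only two-sided bounds with a fixed constant $c>1$: they keep $\sigma_\alpha(w,|w-z|)$ and $\sigma_\alpha(x,|x-y|)$ within a bounded multiplicative factor of each other, which does not force their difference to vanish. A Borel function satisfying (\ref{E:symbol}) may oscillate by a bounded factor (e.g. $\sigma_\alpha(x,r)=r^{2\alpha-2}\mu(B(x,r))(1+\tfrac12\mathbf{1}_{\mathbb{Q}}(r))$), and then the supremum in (\ref{E:sigmacont}) does not tend to zero, so comparability alone can never prove (ii); some continuity or structural form of $\sigma_\alpha$ must enter. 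This is exactly what the paper's proof uses: working with the product form $\sigma_\alpha(x,r)=r^{2\alpha-2}\mu(B(x,r))$ (the choice made in the applications, e.g. $\sigma_1(x,r)=\mu(B(x,r))$ in Section \ref{S:MoscoWentzell}), it splits
\[
|\sigma_\alpha(w,|w-z|)-\sigma_\alpha(x,|x-y|)|\leq \bigl||w-z|^{2\alpha-2}-|x-y|^{2\alpha-2}\bigr|\,\mu(B(w,|w-z|))+|x-y|^{2\alpha-2}\,\bigl|\mu(B(w,|w-z|))-\mu(B(x,|x-y|))\bigr|,
\]
controls the first summand by the uniform continuity of $(w,z)\mapsto|w-z|^{2\alpha-2}$ away from the diagonal together with (\ref{E:mumuncompare}), and the second by part (i) after re-centering the ball at $x$. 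Your proof of (ii) should be rewritten along these lines (or an explicit continuity hypothesis on $\sigma_\alpha$ must be added); as it stands, the final deduction does not follow from the ingredients you cite.
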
 

\begin{proof}
Given $x\in \Gamma$ and $0<r\leq \diam \Gamma$ the continuity of $\mu$ from below gives
\[\mu(B(x,r))=\mu\left(\bigcup_{n=1}^\infty B(x,r_n)\right)=\lim_{n\to \infty} \mu(B(x,r_n))\]
for any sequence $(r_n)_n$ with $r_n\uparrow r$. Given $x\in \Gamma$ and $0\leq r\leq \diam \Gamma$ the
continuity of $\mu$ from above and (\ref{E:openorcloset}) give  
\[\mu(B(x,r))=\mu\left(\bigcap_{n=1}^\infty \overline{B(x,r_n)}\right)=\lim_{n\to \infty} \mu(\overline{B(x,r_n)})=\lim_{n\to \infty} \mu(B(x,r_n))\]
for any sequence $(r_n)_n\subset [0,\diam \Gamma]$ with $r_n\downarrow r$. Given $(x,r),(x',r')\in \Gamma\times [0,\diam \Gamma]$ with $|(x,r)-(x',r')|<\varepsilon$, we have $B(x',r')\subset B(x,r+2\varepsilon)$ and therefore 
\[\mu(B(x',r'))-\mu(B(x,r))\leq \mu(B(x,r+2\varepsilon))-\mu(B(x,r)).\]
For $r=0$ the left-hand side is nonnegative. For $r>0$ we may choose $\varepsilon$ small enough to have $B(x,r-2\varepsilon)\subset B(x',r')$ and therefore
\[\mu(B(x,r))-\mu(B(x',r'))\leq \mu(B(x,r))-\mu(B(x,r-2\varepsilon)).\]
Together with the preceding, this implies that $(x,r)\mapsto \mu(B(x,r))$ is continuous; the compactness of $\Gamma\times [0,\diam\:\Gamma]$ implies the uniform continuity as claimed in (i).

If $n$ is large enough, then for any $w\in e_n(x)$ and $z\in e_n(y)$ we have 
\begin{multline}
|\sigma_\alpha(w,|w-z|)-\sigma_\alpha(x,|x-y|)|\notag\\
\leq \sup_{w\in e_n(x), z\in e_n(y)}||w-z|^{2\alpha-2}-|x-y|^{2\alpha-2}|\mu(B(w,|w-z|))\notag\\
+|x-y|^{2\alpha-2}\sup_{w\in e_n(x), z\in e_n(y)}|\mu(B(w,|w-z|))-\mu(B(x,|x-y|))|.\notag
\end{multline}
The first summand goes to zero by (\ref{E:difference}), (\ref{E:convex2}) and the uniform continuity of $(w,z)\mapsto |w-z|^{2\alpha-2}$ outside a neighborhood of the diagonal. As in the proof of Lemma \ref{L:compare}, let $C:=\frac{A}{16}$. Since for large enough $n$ we have
\[B(x,|x-y|-3C r_n)\subset B(w,|w-z|)\subset B(x,|x-y|+3C r_n)\]
for all $w\in e_n(x)$ and $z\in e_n(y)$, the second summand goes to zero by (i). This gives the first claim in (ii). The second claim follows from the uniform continuity in (i), note that for large $n$ we have $|x-y|\geq \delta>3C r_n$. 

To see (iii), suppose that $(x,y)\in \Gamma\times\Gamma\setminus \diag$, $w\in e_n(x)$ and $z\in e_n(y)$. If $x'\in \Gamma\setminus V_\ast$ and $w'\in e_n(x')$, then 
\begin{align}
\big|\mathbf{1}_{B(w,\left|w-z\right|)}(w')&-\mathbf{1}_{B(x,\left|x-y\right|)}(x')\big|\notag\\
&=\left|\mathbf{1}_{B(w',\left|w-z\right|)}(w)-\mathbf{1}_{B(x,\left|x-y\right|)}(x')\right|\notag\\
&=\mathbf{1}_{B(w',\left|w-z\right|)}(w)\mathbf{1}_{B(x,\left|x-y\right|)^c}(x')+\mathbf{1}_{B(w',\left|w-z\right|)^c}(w)\mathbf{1}_{B(x,\left|x-y\right|)}(x').\notag
\end{align}
Since 
\[B(w',|w-z|)\subset B(w',|x-y|+2C r_n)\subset B(x',|x-y|+3C r_n)\]
by (\ref{E:difference}) and (\ref{E:convex2}) and similarly
\[B(x',|x-y|- 3C r_n)\subset B(w',|x-y|-2C r_n) \subset B(w',|w-z|),  \]
the preceding is bounded by 
\begin{align}
&\mathbf{1}_{B(x',\left|x-y\right|+ 3C r_n)}(w)\mathbf{1}_{B(x,\left|x-y\right|)^c}(x')+\mathbf{1}_{B(x',\left|x-y\right|-3C r_n))^c}(w)\mathbf{1}_{B(x,\left|x-y\right|))}(x')\notag\\
&=\mathbf{1}_{B(w,\left|x-y\right|+3C r_n)}(x')\mathbf{1}_{B(x,\left|x-y\right|)^c}(x')+\mathbf{1}_{B(w,\left|x-y\right|-3C r_n)^c}(x')\mathbf{1}_{B(x,\left|x-y\right|)}(x')\notag\\
&\leq \mathbf{1}_{B(x,\left|x-y\right|+4C r_n)}(x')\mathbf{1}_{B(x,\left|x-y\right|)^c}(x')+\mathbf{1}_{B(x,\left|x-y\right|-4C r_n)^c}(x')\mathbf{1}_{B(x,\left|x-y\right|)}(x').\notag
\end{align}
Therefore 
\begin{align}
\big|\mu_n(B(w,\left|w-z\right|))&-\mu(B(x,\left|x-y\right|))\big|\notag\\
&=\left|\int_{\Gamma_n}\mathbf{1}_{B(w,\left|w-z\right|)}(w')\mu_n(dw')-\int_\Gamma\mathbf{1}_{B(x,\left|x-y\right|)}(x')\mu(dx')\right|\notag\\
&\leq \int_\Gamma \fint_{e_n(x')}\left|\mathbf{1}_{B(w,\left|w-z\right|)}(w')-\mathbf{1}_{B(x,\left|x-y\right|)}(x')\right|\:\mathcal{H}^1(dw')\mu(dx')\notag\\
&\leq \int_\Gamma \mathbf{1}_{B(x,\left|x-y\right|+4C r_n)}(x')\mathbf{1}_{B(x,\left|x-y\right|)^c}(x')\mu(dx')\notag\\
&\hspace{80pt}+\int_K \mathbf{1}_{B(x,\left|x-y\right|-4C r_n)^c}(x')\mathbf{1}_{B(x,\left|x-y\right|)}(x')\mu(dx')\notag\\
&=\mu(B(x,\left|x-y\right|+4C r_n))-\mu(B(x,\left|x-y\right|))\notag\\
&\hspace{80pt}+\mu(B(x,\left|x-y\right|))-\mu(B(x,\left|x-y\right|-4C r_n))\notag\\
&=\mu(B(x,\left|x-y\right|+4C r_n))-\mu(B(x,\left|x-y\right|- 4C r_n)),\notag
\end{align}
and (iii) follows using (i).
\end{proof}

We can now prove claim (\ref{E:claimcorel}).

\begin{proof}[Proof of (\ref{E:claimcorel}).] 
Our first claim is that, given $u\in \lip_b(\mathbb{R}^2)$ and $(x,y)\in \Gamma\times\Gamma\setminus \diag$, we have 
\begin{multline}\label{E:firstclaim}
\lim_{n\to \infty}\mathbf{1}_{\{|x-y|>(A\pm 2C)r_n\}}\fint_{e_n(y)}\fint_{e_n(x)}\frac{(u(w)-u(z))^2}{\sigma_\alpha(w,|w-z|)\mu_n(B(w,|w-z|))}\mathcal{H}^1(dw)\mathcal{H}^1(dz)\\
=\frac{(u(x)-u(y))^2}{\sigma_\alpha(x,|x-y|)\mu(B(x,|x-y|))};
\end{multline}
here $C$ is again as in (\ref{E:difference}). Clearly the indicator becomes one for large enough $n$, and
\[\lim_{n\to \infty}\fint_{e_n(y)}\fint_{e_n(x)}(u(w)-u(z))^2\mathcal{H}^1(dw)\mathcal{H}^1(dz)=(u(x)-u(y))^2,\]
because $(w,z)\mapsto (u(w)-u(z))^2$ is uniformly continuous on $\mathbb{R}^2$. To verify (\ref{E:firstclaim}), it therefore suffices to show that 
\begin{multline}\label{E:suffclaim}
\lim_{n\to \infty} \fint_{e_n(y)}\fint_{e_n(x)}(u(w)-u(z))^2 \Big(\frac{1}{\sigma_\alpha(w,|w-z|)\mu_n(B(w,|w-z|))}\\
-\frac{1}{\sigma_\alpha(x,|x-y|)\mu(B(x,|x-y|))}\Big)\mathcal{H}^1(dw)\mathcal{H}^1(dz)=0
\end{multline}
for any fixed $(x,y)\in \Gamma\times\Gamma\setminus \diag$. This is a consequence of (\ref{E:sigmacont}) and (\ref{E:mumuncont}): We have 
\begin{multline}\label{E:diffofreciprocals}
\Big|\frac{1}{\sigma_\alpha(w,|w-z|)\mu_n(B(w,|w-z|))}-\frac{1}{\sigma_\alpha(x,|x-y|)\mu(B(x,|x-y|))}\Big|\\
\leq \Big|\frac{1}{\sigma_\alpha(w,|w-z|)}-\frac{1}{\sigma_\alpha(x,|x-y|)}\Big|\frac{1}{\mu_n(B(w,|w-z|))}\\
+\frac{1}{\sigma_\alpha(x,|x-y|)}\Big|\frac{1}{\mu_n(B(w,|w-z|))}-\frac{1}{\mu(B(x,|x-y|))}\Big|.
\end{multline}
By (\ref{E:sigmacompare}) and (\ref{E:mumuncompare}) the first summand on the right-hand side is bounded by 
\begin{equation}\label{E:sigmabound}
c\:\frac{|\sigma_\alpha(w,|w-z|)-\sigma_\alpha(x,|x-y|)|}{\sigma_\alpha(x,|x-y|)^2\mu(B((x,|x-y|))}
\end{equation}
and the second by 
\begin{equation}\label{E:mumunbound}
c\:\frac{|\mu(B(x,|x-y|))-\mu_n(B(w,|w-z|))|}{\sigma_\alpha(x,|x-y|)\mu(B((x,|x-y|))^2},
\end{equation}
here $c>1$ is a constant independent of $w$, $x$, $y$ and $z$. Since $x$ and $y$ are fixed, (\ref{E:suffclaim}) now follows using Lemma \ref{L:cont} (ii) and (iii).

By Lemma \ref{L:compare} (ii) and (iii) and (\ref{E:easy}) we can find a constant $c'>0$ such that for any $x,y\in \Gamma\times\Gamma\setminus \diag$, any $u\in \lip_b(\mathbb{R}^2)$ and any $n$ we have 
\begin{multline}\label{E:majorant}
\mathbf{1}_{\{|x-y|>(A\pm 2C)r_n\}}\fint_{e_n(y)}\fint_{e_n(x)}\frac{(u(w)-u(z))^2}{\sigma_\alpha(w,|w-z|)\mu_n(B(w,|w-z|))}\mathcal{H}^1(dw)\mathcal{H}^1(dz)\\
\leq \frac{c'\lip(u)^2|x-y|^2}{\sigma_\alpha(x,|x-y|)\mu(B(x,|x-y|))}.
\end{multline}
By (\ref{E:symbol}), (\ref{E:uplowreg}), (\ref{E:mudoubling}) and (\ref{E:constellation2}), we have 
\begin{multline}
\int_\Gamma \frac{|x-y|^2}{\sigma_\alpha(x,|x-y|)\mu(B(x,|x-y|))}\mu(dx)\leq c\: \int_\Gamma \frac{|x-y|^{4-2\alpha-s}}{\mu(B(y,|x-y|))}\mu(dx)\notag\\
=c\:\int_0^{\diam\Gamma}\frac{r^{4-2\alpha-s}}{\mu(B(y,r))}d\mu(B(y,r))\leq c\:\sum_{j=0}^\infty 2^{-j(4-2\alpha-s)}\frac{\mu(B(y,2^{-j}\diam\Gamma))}{\mu(B(y,2^{-j-1}\diam\Gamma))}\leq \frac{c\:c_{\mu,D}}{1-2^{4-2\alpha-s}}\notag
\end{multline}
for any $y\in \Gamma$ and with constants $c>0$ independent of $y$. Since $\mu$ is a probability measure, this shows that the right-hand side of (\ref{E:majorant}) is in $L^1(\Gamma\times\Gamma,\mu\otimes\mu)$, so that by (\ref{E:firstclaim}) and dominated convergence, $\mathcal{Q}^{\alpha}(u)$ is seen to equal
\[\lim_{n\to \infty} \int\int_{\{|x-y|>(A\pm 2C)r_n\}}\fint_{e_n(y)}\fint_{e_n(x)}\frac{(u(w)-u(z))^2}{\sigma_\alpha(w,|w-z|)\mu_n(B(w,|w-z|))}\mathcal{H}^1(dw)\mathcal{H}^1(dz)\mu(dx)\mu(dy).\]
Since by (\ref{E:convex2}) we have 
\begin{multline}
\{(x,y,w,z)\in \Gamma^2\times \Gamma_n^2: |x-y|>(A+2C)r_n, w\in e_n(x), z\in e_n(y)\}\notag\\
\subset \{(x,y,w,z)\in \Gamma^2\times \Gamma_n^2: w\in e_n(x), z\in e_n(y), |w-z|>Ar_n\}\notag\\
\subset \{(x,y,w,z)\in \Gamma^2\times \Gamma_n^2: |x-y|>(A-2C)r_n, w\in e_n(x), z\in e_n(y)\},
\end{multline} 
monotonicity implies that 
\begin{align}
\mathcal{Q}^{\alpha}(u)&=\lim_{n\to \infty} \int_\Gamma\int_\Gamma\fint_{e_n(y)}\fint_{e_n(x)}\frac{\mathbf{1}_{\{|w-z|>Ar_n\}}(u(w)-u(z))^2}{\sigma_\alpha(w,|w-z|)\mu_n(B(w,|w-z|))}\mathcal{H}^1(dw)\mathcal{H}^1(dz)\mu(dx)\mu(dy)\notag\\
&=\lim_{n\to \infty}\int_{\Gamma_n}\int_{\Gamma_n}\mathbf{1}_{\{|\xi-\eta|>Ar_n\}}\frac{(u(\xi)-u(\eta))^2}{\sigma_\alpha(\xi, |\xi-\eta|))\mu_n(B(\xi,|\xi-\eta|))}\mu_n(d\xi)\mu_n(d\eta)\notag\\
&=\lim_{n\to \infty}\mathcal{Q}^{\alpha,\ell}_n(u).\notag
\end{align}
\end{proof}

\begin{proposition}\label{P:limsupboundary}
Let Assumptions \ref{A:basicass} and \ref{A:boundaryless} be satisfied, let $\alpha$ be as in (\ref{E:constellation2}) and let
$\mathcal{Q}^\alpha$ and $\mathcal{Q}^\alpha_n$ be as in (\ref{E:boundaryenergy}) and (\ref{E:boundaryenergyn}) respectively.
For any $\varphi\in L^2(\Gamma,\mu)$ there is a sequence $(\varphi_n)_n$ of elements $\varphi_n\in L^2(\Gamma_n,\mu_n)$ that converges KS-strongly to $\varphi\in L^2(\Gamma,\mu)$ and satisfies 
\[\limsup_{n\to\infty} \mathcal{Q}^\alpha_n(\varphi_n) \leq \mathcal{Q}^\alpha(\varphi). \]
\end{proposition}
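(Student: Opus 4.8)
The plan is a density-and-diagonalization argument with Proposition \ref{P:convLip} as the essential analytic input. If $\varphi\notin B^{2,2}_\alpha(\Gamma)$, then $\mathcal{Q}^\alpha(\varphi)=+\infty$ and the asserted inequality is vacuous, so it suffices to produce \emph{any} sequence $(\varphi_n)_n$ with $\varphi_n\in L^2(\Gamma_n,\mu_n)$ converging KS-strongly to $\varphi$; this is possible because $\lip(\Gamma)$ is dense in $L^2(\Gamma,\mu)$ (it is dense in $B^{2,2}_\alpha(\Gamma)$, which in turn is dense in $L^2(\Gamma,\mu)$ as the domain of a regular Dirichlet form) and the identification maps of Lemma \ref{L:KSconvbd} convert an $L^2$-approximating sequence of Lipschitz functions into a KS-strongly convergent one via the diagonal construction below, run without the energy estimate. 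From now on I assume $\varphi\in B^{2,2}_\alpha(\Gamma)$.

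Using the density of $\lip(\Gamma)$ in $B^{2,2}_\alpha(\Gamma)$, I would first pick $u^{(k)}\in\lip(\Gamma)$ with $\|u^{(k)}-\varphi\|_{B^{2,2}_\alpha(\Gamma)}\to 0$; then $\|u^{(k)}-\varphi\|_{L^2(\Gamma,\mu)}\to 0$, and since $(\mathcal{Q}^\alpha)^{1/2}$ is a seminorm, $|(\mathcal{Q}^\alpha(u^{(k)}))^{1/2}-(\mathcal{Q}^\alpha(\varphi))^{1/2}|\le(\mathcal{Q}^\alpha(u^{(k)}-\varphi))^{1/2}\to 0$, hence $\mathcal{Q}^\alpha(u^{(k)})\to\mathcal{Q}^\alpha(\varphi)$. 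Write $\Phi_n\psi:=(E_\Gamma\psi)|_{\Gamma_n}$ for the identification maps from Lemma \ref{L:KSconvbd}. Since $E_\Gamma u^{(k)}\in\lip_b(\mathbb{R}^2)$ and $(E_\Gamma u^{(k)})|_\Gamma=u^{(k)}$, Proposition \ref{P:convLip} gives $\mathcal{Q}^\alpha_n(\Phi_n u^{(k)})\to\mathcal{Q}^\alpha(u^{(k)})$ for each fixed $k$. Moreover, for fixed $j,k$ the function $(E_\Gamma u^{(j)})(E_\Gamma u^{(k)})$ is bounded and continuous on $\mathbb{R}^2$, so by the weak convergence $\mu_n\to\mu$ in (\ref{E:weakconveval}) we get $\langle\Phi_n u^{(j)},\Phi_n u^{(k)}\rangle_{L^2(\Gamma_n,\mu_n)}\to\langle u^{(j)},u^{(k)}\rangle_{L^2(\Gamma,\mu)}$ (for $j=k$ this is Lemma \ref{L:KSconvbd}).

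Then I would diagonalize: choose integers $N_1<N_2<\cdots$ such that for every $k$, every $n\ge N_k$ and every $j\le k$ one has $|\mathcal{Q}^\alpha_n(\Phi_n u^{(k)})-\mathcal{Q}^\alpha(u^{(k)})|<1/k$, $|\langle\Phi_n u^{(j)},\Phi_n u^{(k)}\rangle_{L^2(\Gamma_n,\mu_n)}-\langle u^{(j)},u^{(k)}\rangle_{L^2(\Gamma,\mu)}|<1/k$, and $|\|\Phi_n u^{(j)}\|^2_{L^2(\Gamma_n,\mu_n)}-\|u^{(j)}\|^2_{L^2(\Gamma,\mu)}|<1/k$; this is possible since for each $k$ only finitely many convergences are imposed. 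Setting $\varphi_n:=\Phi_n u^{(k)}$ for $N_k\le n<N_{k+1}$ and writing $k(n)$ for this index (so $k(n)\to\infty$), one gets $\mathcal{Q}^\alpha_n(\varphi_n)\le\mathcal{Q}^\alpha(u^{(k(n))})+1/k(n)$, hence $\limsup_n\mathcal{Q}^\alpha_n(\varphi_n)\le\lim_k\mathcal{Q}^\alpha(u^{(k)})=\mathcal{Q}^\alpha(\varphi)$. For the KS-strong convergence, fix $m$; for $n\ge N_m$ we have $k(n)\ge m$, and expanding the square and using the three estimates above with $k$ replaced by $k(n)$ and $j=m$ yields
\[\|\Phi_n u^{(m)}-\varphi_n\|^2_{L^2(\Gamma_n,\mu_n)}\le\|u^{(m)}-u^{(k(n))}\|^2_{L^2(\Gamma,\mu)}+\frac{4}{k(n)}.\]
Letting $n\to\infty$ and using $\|u^{(k(n))}-\varphi\|_{L^2(\Gamma,\mu)}\to 0$ gives $\limsup_n\|\Phi_n u^{(m)}-\varphi_n\|_{L^2(\Gamma_n,\mu_n)}\le\|u^{(m)}-\varphi\|_{L^2(\Gamma,\mu)}$, which tends to $0$ as $m\to\infty$; together with $\|u^{(m)}-\varphi\|_{L^2(\Gamma,\mu)}\to 0$ this is exactly the defining criterion for $\varphi_n\to\varphi$ KS-strongly (see Appendix \ref{S:Notions}).

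The main obstacle is conceptual rather than computational: the heavy lifting is already done by Proposition \ref{P:convLip}, and what remains is to reconcile the energy bound with KS-strong convergence, which is governed by the double limit $\lim_k\limsup_n\|\Phi_n u^{(k)}-\varphi_n\|_{L^2(\Gamma_n,\mu_n)}=0$ rather than a plain limit. This forces the recovery sequence to be chosen slowly enough to stay close to each fixed core element $u^{(k)}$, so one must control along the diagonal not only the norms $\|\Phi_n u^{(k)}\|_{L^2(\Gamma_n,\mu_n)}$ but also the cross inner products $\langle\Phi_n u^{(j)},\Phi_n u^{(k)}\rangle_{L^2(\Gamma_n,\mu_n)}$ — this is where the weak convergence $\mu_n\to\mu$ and the linearity of $E_\Gamma$ enter.
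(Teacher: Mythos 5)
Your proposal is correct and follows essentially the same route as the paper: approximate $\varphi$ by Lipschitz functions in energy and $L^2$, use Proposition \ref{P:convLip} together with the weak convergence (\ref{E:weakconveval}) for the $L^2(\Gamma_n,\mu_n)$-quantities, and diagonalize to obtain a recovery sequence whose KS-strong convergence is checked via the double-limit criterion. The only difference is bookkeeping: you expand the square and control norms and cross inner products separately, while the paper controls $\|\psi_j-\psi_i\|_{L^2(\Gamma_n,\mu_n)}$ directly and compares with a Cauchy subsequence $\psi_{i_k}$ of the core sequence — these are equivalent.
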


\begin{proof}
Let $\varphi\in L^2(\Gamma,\mu)$. We may assume that $\varphi\in B_\alpha^{2,2}(\Gamma)$, because otherwise $\mathcal{Q}^\alpha(\varphi)=+\infty$ and there is nothing to prove. By density we can find a sequence $(\psi_n)_n\subset \lip(\Gamma)$ such that 
\begin{equation}\label{E:L2conv}
\lim_{n\to\infty}\left\|\psi_n-\varphi\right\|_{L^2(\Gamma,\mu)}=0
\end{equation}
and 
\begin{equation}\label{E:energiesconv}
\lim_{n\to\infty}\mathcal{Q}^\alpha(\psi_n)=\mathcal{Q}^\alpha(\varphi).
\end{equation}
For each $n\geq 1$ the Whitney extension $E_\Gamma\psi_n$ is an element of $\lip_b(\mathbb{R}^2)$; we denote it again by $\psi_n$. Given $j\geq 1$ we can find some $n_j\geq 1$ such that 
\[\Big|\left\|\psi_j-\psi_i\right\|_{L^2(\Gamma_n,\mu_n)}-\left\|\psi_j-\psi_i\right\|_{L^2(\Gamma,\mu)}\Big|<2^{-j},\ i=1,2,...,j,\]
and 
\begin{equation}\label{E:energiesclose}
\Big|\mathcal{Q}_n^\alpha(\psi_j)-\mathcal{Q}^\alpha(\psi_j)\Big|<2^{-j};
\end{equation}
this follows from (\ref{E:weakconveval}) and Proposition \ref{P:convLip}. From (\ref{E:L2conv}) it follows that for all $k\geq 1$ we can find some $i_k$  such that $\|\psi_i-\psi_j\|_{L^2(\Gamma,\mu)}<2^{-k}$ if $i,j\geq i_k$. Combining, it follows that 
\[\left\|\psi_j-\psi_{i_k}\right\|_{L^2(\Gamma_n,\mu_n)}\leq \left\|\psi_j-\psi_{i_k}\right\|_{L^2(\Gamma,\mu)}+2^{-j}\leq 2^{-k}+2^{-j}\]
for all $k$, all $j>i_k$ and all $n\geq n_j$. Now set $\varphi_n:=\psi_j$ for all $n_j< n\leq n_{j+1}$ and $\widetilde{\chi}_k:=\psi_{i_k}$. Then 
\[\limsup_{n\to\infty}\left\|\varphi_n-\widetilde{\chi}_k\right\|_{L^2(\Gamma_n,\mu_n)}\leq 2^{-k}\]
for all $k\geq 1$. Since on the other hand we have $\lim_{k\to\infty}\left\|\varphi-\widetilde{\chi}_k\right\|_{L^2(\Gamma,\mu)}=0$ by (\ref{E:L2conv}), it follows that $\lim_{n\to\infty}\varphi_n=\varphi$ KS-strongly. Combining (\ref{E:energiesconv}) and (\ref{E:energiesclose}) gives 
\[\lim_{n\to \infty} \mathcal{Q}^\alpha_n(\varphi_n)=\mathcal{Q}^\alpha(\varphi).\]
\end{proof}

\begin{proposition}\label{P:liminfboundary}

Let Assumptions \ref{A:basicass} and \ref{A:boundaryless} be satisfied, let $\alpha$ be as in (\ref{E:constellation2}) and let
$\mathcal{Q}^\alpha$ and $\mathcal{Q}^\alpha_n$ be as in (\ref{E:boundaryenergy}) and (\ref{E:boundaryenergyn}) respectively. If $(\varphi_n)_n$ is a sequence of elements $\varphi_n\in L^2(\Gamma_n,\mu_n)$ that converges KS-weakly to $\varphi\in L^2(\Gamma,\mu)$, then we have 
\[\mathcal{Q}^\alpha(\varphi)\leq \liminf_{n\to\infty} \mathcal{Q}^\alpha_n(\varphi_n). \]
\end{proposition}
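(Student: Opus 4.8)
We may assume $L:=\liminf_{n\to\infty}\mathcal{Q}^\alpha_n(\varphi_n)<+\infty$ and, passing to a subsequence we do not relabel, that $\mathcal{Q}^\alpha_n(\varphi_n)\to L$; along it $(\varphi_n)_n$ still converges KS-weakly to $\varphi$. The plan is to push the $\varphi_n$ forward to $\Gamma$ by the averaging operators (\ref{E:deffn}) and to use weak lower semicontinuity of a truncated form on the fixed space $L^2(\Gamma,\mu)$. Write $\phi_n:=[\varphi_n]_n$. By Jensen's inequality and (\ref{E:trading}) we have $\left\|\phi_n\right\|_{L^2(\Gamma,\mu)}^2\leq\left\|\varphi_n\right\|_{L^2(\Gamma_n,\mu_n)}^2$, which is bounded by the definition of KS-weak convergence. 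I would then verify that $\phi_n\to\varphi$ weakly in $L^2(\Gamma,\mu)$: given $\psi\in\lip(\Gamma)$ with Whitney extension $\widetilde\psi=E_\Gamma\psi$, identity (\ref{E:trading}) and the estimate $|\widetilde\psi(w)-\psi(x)|\leq\lip(\widetilde\psi)\,|w-x|\leq 8S\lip(\widetilde\psi)\,r_n$ for $w\in e_n(x)$ (cf.\ (\ref{E:convex2})) give $\int_\Gamma\phi_n\psi\,d\mu=\int_{\Gamma_n}\varphi_n\widetilde\psi\,d\mu_n+O(r_n\left\|\varphi_n\right\|_{L^2(\Gamma_n,\mu_n)})$; since $\int_{\Gamma_n}\varphi_n\widetilde\psi\,d\mu_n\to\langle\varphi,\psi\rangle_{L^2(\Gamma,\mu)}$ by KS-weak convergence and $\lip(\Gamma)$ is dense in $L^2(\Gamma,\mu)$, the uniform bound on $\left\|\phi_n\right\|_{L^2(\Gamma,\mu)}$ yields the weak convergence.

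For $\delta>0$ introduce the truncated form
\[\mathcal{Q}^{\alpha,\delta}(\psi):=\int_\Gamma\int_{\Gamma\cap B(x,\delta)^c}\frac{(\psi(x)-\psi(y))^2}{\sigma_\alpha(x,|x-y|)\mu(B(x,|x-y|))}\mu(dx)\mu(dy),\quad\psi\in L^2(\Gamma,\mu).\]
By (\ref{E:symbol}) and (\ref{E:uplowreg}) its kernel is bounded on $\{|x-y|>\delta\}$, so $\mathcal{Q}^{\alpha,\delta}$ is a bounded nonnegative quadratic form on $L^2(\Gamma,\mu)$, hence convex and norm-continuous, hence weakly lower semicontinuous. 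Moreover $\mathcal{Q}^{\alpha,\delta}(\varphi)\uparrow\mathcal{Q}^\alpha(\varphi)$ as $\delta\downarrow0$ by monotone convergence (recall $\mu$ is atom free). It therefore suffices to show $\mathcal{Q}^{\alpha,\delta}(\varphi)\leq L$ for each fixed $\delta>0$.

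Fix $\delta>0$ and take $n$ large enough that $r_n<\delta/(A+16S)$. For $|x-y|>\delta$, $w\in e_n(x)$ and $z\in e_n(y)$ we have $|x-y|>Ar_n/2$, so Lemma~\ref{L:compare}(i) gives $|w-z|\geq|x-y|-16Sr_n>Ar_n$ and hence $\varrho_n(w,|w-z|)=\sigma_\alpha(w,|w-z|)$. Applying Jensen's inequality to $(\phi_n(x)-\phi_n(y))^2$ together with the bound
\[\frac{1}{\sigma_\alpha(w,|w-z|)\mu_n(B(w,|w-z|))}\leq\frac{1+\varepsilon_n(\delta)}{\sigma_\alpha(x,|x-y|)\mu(B(x,|x-y|))},\]
valid on $\{|x-y|\geq\delta\}$ with $\varepsilon_n(\delta)\to0$ as $n\to\infty$ --- which follows from the chain (\ref{E:diffofreciprocals})--(\ref{E:mumunbound}) and the \emph{uniform} convergences in Lemma~\ref{L:cont}(ii),(iii) on $\{|x-y|\geq\delta\}$, the denominators being bounded below there by (\ref{E:symbol}) and (\ref{E:uplowreg}) --- I obtain
\[\frac{(\phi_n(x)-\phi_n(y))^2}{\sigma_\alpha(x,|x-y|)\mu(B(x,|x-y|))}\leq(1+\varepsilon_n(\delta))\fint_{e_n(x)}\fint_{e_n(y)}\frac{(\varphi_n(w)-\varphi_n(z))^2}{\sigma_\alpha(w,|w-z|)\mu_n(B(w,|w-z|))}\mathcal{H}^1(dw)\mathcal{H}^1(dz).\]
Integrating over $\{|x-y|>\delta\}$, using that there $\mathbf 1_{\{|x-y|>\delta\}}\leq\mathbf 1_{\{|w-z|>Ar_n\}}$ for $w\in e_n(x)$, $z\in e_n(y)$ (so the right-hand side only grows if this constraint replaces the former and the outer integral is extended to all of $\Gamma\times\Gamma$), and then applying (\ref{E:defmun}) twice together with Tonelli's theorem as in (\ref{E:trading}), I get
\[\mathcal{Q}^{\alpha,\delta}(\phi_n)\leq(1+\varepsilon_n(\delta))\int_{\Gamma_n}\int_{\Gamma_n\cap B(\xi,Ar_n)^c}\frac{(\varphi_n(\xi)-\varphi_n(\eta))^2}{\sigma_\alpha(\xi,|\xi-\eta|)\mu_n(B(\xi,|\xi-\eta|))}\mu_n(d\xi)\mu_n(d\eta)\leq(1+\varepsilon_n(\delta))\mathcal{Q}^\alpha_n(\varphi_n),\]
the last step because $\varrho_n(\xi,|\xi-\eta|)=\sigma_\alpha(\xi,|\xi-\eta|)$ when $|\xi-\eta|>Ar_n$. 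Letting $n\to\infty$ gives $\liminf_n\mathcal{Q}^{\alpha,\delta}(\phi_n)\leq L$; since $\phi_n\to\varphi$ weakly in $L^2(\Gamma,\mu)$ and $\mathcal{Q}^{\alpha,\delta}$ is weakly lower semicontinuous, $\mathcal{Q}^{\alpha,\delta}(\varphi)\leq L$, and $\delta\downarrow0$ finishes the proof.

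The step I expect to be the main obstacle is forcing the comparison constant $1+\varepsilon_n(\delta)$ to tend to $1$ rather than to stay a fixed $c>1$: a fixed constant would only yield $\mathcal{Q}^\alpha(\varphi)\leq cL$, which is useless for Mosco convergence. This is exactly the point where one must upgrade the crude comparability of Lemma~\ref{L:compare} to the genuine uniform convergence of Lemma~\ref{L:cont} away from the diagonal, and carefully coordinate the truncation level $\delta$, the threshold $Ar_n$ built into $\varrho_n$, and the edge-averaging --- reusing the estimates already assembled for (\ref{E:claimcorel}).
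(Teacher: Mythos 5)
Your proposal follows essentially the same route as the paper's proof: edge-wise averaging $[\varphi_n]_n$, the $\delta$-truncated form $\mathcal{Q}^{\alpha,\delta}$, Jensen's inequality, the observation that the truncation $\{|x-y|>\delta\}$ forces $|w-z|>Ar_n$ so only the $\sigma_\alpha$-part of $\varrho_n$ is seen, and the uniform kernel comparison away from the diagonal via Lemma \ref{L:cont} (ii),(iii), with the weak $L^2(\Gamma,\mu)$-limit of the averages identified through the KS-weak convergence exactly as in the paper. The only genuine divergence is the finish: you invoke directly that the bounded nonnegative quadratic form $\mathcal{Q}^{\alpha,\delta}$ is weakly lower semicontinuous on $L^2(\Gamma,\mu)$, whereas the paper extracts a weakly convergent subsequence of the averages and runs a Banach--Saks/Ces\`aro argument with the triangle inequality for the seminorm $(\mathcal{Q}^{\alpha,\delta})^{1/2}$; the latter is just a hands-on proof of the lower semicontinuity you quote, so your shortcut is legitimate and slightly cleaner, and your multiplicative error $1+\varepsilon_n(\delta)$ plays the role of the paper's additive $\varepsilon$-error. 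One slip to correct: the comparison you display, $\frac{1}{\sigma_\alpha(w,|w-z|)\mu_n(B(w,|w-z|))}\le\frac{1+\varepsilon_n(\delta)}{\sigma_\alpha(x,|x-y|)\mu(B(x,|x-y|))}$, is the wrong direction for the estimate you then derive after Jensen; what is actually needed is $\frac{1}{\sigma_\alpha(x,|x-y|)\mu(B(x,|x-y|))}\le\frac{1+\varepsilon_n(\delta)}{\sigma_\alpha(w,|w-z|)\mu_n(B(w,|w-z|))}$, equivalently $\sigma_\alpha(w,|w-z|)\mu_n(B(w,|w-z|))\le(1+\varepsilon_n(\delta))\,\sigma_\alpha(x,|x-y|)\mu(B(x,|x-y|))$. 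This follows from the very same uniform convergences in Lemma \ref{L:cont} (ii),(iii) (and the crude bounds of Lemma \ref{L:compare}) together with the uniform positive lower bound of $\sigma_\alpha(x,|x-y|)\mu(B(x,|x-y|))$ on $\{|x-y|\ge\delta\}$ coming from (\ref{E:symbol}) and (\ref{E:uplowreg}), so the fix is immediate and the argument stands.
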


\begin{proof}
Let $\delta>0$. We will show that 
\[\mathcal{Q}^{\alpha,\delta}(\psi):=\int_\Gamma\int_\Gamma\mathbf{1}_{\{|x-y|\geq \delta\}}\frac{(\psi(x)-\psi(y))^2}{\sigma_\alpha(x,|x-y|)\mu(B(x,|x-y|))}\mu(dx)\mu(dy),\quad \psi\in L^2(\Gamma,\mu),\]
satisfies
\begin{equation}\label{E:Qdeltaclaim}
\mathcal{Q}^{\alpha,\delta}(\varphi)\leq \liminf_{n\to\infty} \mathcal{Q}^\alpha_n(\varphi_n).
\end{equation}
Since $\delta>0$ was arbitrary, this proves the lemma. To prepare the proof of (\ref{E:Qdeltaclaim}), let 
\begin{multline}
\mathcal{Q}^{\alpha,\delta}_n(\psi):=\int_{\Gamma}\int_{\Gamma}\mathbf{1}_{\{|x-y|\geq \delta\}}\fint_{e_n(y)}\fint_{e_n(x)}\mathbf{1}_{\{|w-z|>Ar_n\}}\frac{(\psi(w)-\psi(z))^2}{\sigma_\alpha(w,|w-z|)\mu_n(B(w,|w-z|))}\times\notag\\
\times\mathcal{H}^1(dw)\mathcal{H}^1(dz)\mu(dx)\mu(dy),\quad  \psi\in L^2(\Gamma_n,\mu_n);
\end{multline}
note that for large enough $n$ we have $\mathbf{1}_{\{|w-z|>Ar_n\}}\equiv 1$ in the inner integral. Since $\delta>0$, Lemma \ref{L:cont} (ii) and (iii) and the estimate in (\ref{E:diffofreciprocals}), (\ref{E:sigmabound}) and (\ref{E:mumunbound}) imply that given $\varepsilon>0$, 
\[\sup_{|x-y|\geq \delta}\ \sup_{w\in e_n(x), z\in e_n(y)}\Big|\frac{1}{\sigma_\alpha(w,|w-z|)\mu_n(B(w,|w-z|))}-\frac{1}{\sigma_\alpha(x,|x-y|)\mu(B(x,|x-y|))}\Big|<\varepsilon\]
whenever $n$ is large enough. Accordingly, we find that 
\begin{align}
&\mathcal{Q}^{\alpha,\delta}_n(\varphi_n) \notag\\
&\geq \int_\Gamma\int_\Gamma\mathbf{1}_{\{|x-y|\geq \delta\}}\left(\frac{1}{\sigma_\alpha(x,|x-y|)\mu(B(x,|x-y|))}-\varepsilon\right)\times\notag\\
&\hspace{150pt} \times\fint_{e_n(y)}\fint_{e_n(x)}(\varphi_n(w)-\varphi_n(z))^2\mathcal{H}^1(dw)\mathcal{H}^1(dz)\mu(dx)\mu(dy)\notag\\
&\geq \int_\Gamma\int_\Gamma \mathbf{1}_{\{|x-y|\geq \delta\}}\left(\frac{1}{\sigma_\alpha(x,|x-y|)\mu(B(x,|x-y|))}-\varepsilon\right)([\varphi_n]_n(x)-[\varphi_n]_n(y))^2\mu(dx)\mu(dy)\notag\\
&\geq \int_\Gamma\int_\Gamma\mathbf{1}_{\{|x-y|\geq \delta\}}\frac{([\varphi_n]_n(x)-[\varphi_n]_n(y))^2}{\sigma_\alpha(x,|x-y|)\mu(B(x,|x-y|))}\mu(dx)\mu(dy)\notag\\
& \hspace{200pt} -\varepsilon\int_\Gamma\int_\Gamma ([\varphi_n]_n(x)-[\varphi_n]_n(y))^2\mu(dx)\mu(dy)\notag\\
&\geq \mathcal{Q}^{\alpha,\delta}([\varphi_n]_n)-4\varepsilon\left\|[\varphi_n]_n\right\|_{L^2(\Gamma,\mu)}^2\notag
\end{align}
for any such $n$, note that Jensen's inequality and (\ref{E:deffn}) give
\begin{align}
\fint_{e_n(y)}\fint_{e_n(x)}(\varphi_n(w)-\varphi_n(z))^2\mathcal{H}^1(dw)\mathcal{H}^1(dz)&\geq \left(\fint_{e_n(y)}\fint_{e_n(x)}(\varphi_n(w)-\varphi_n(z))\mathcal{H}^1(dw)\mathcal{H}^1(dz)\right)^2\notag\\
&=([\varphi_n]_n(x)-[\varphi_n]_n(y))^2.\notag
\end{align}
In a similar manner we can see that 
\begin{equation}\label{E:supfinite}
\sup_n \left\|[\varphi_n]_n\right\|_{L^2(\Gamma,\mu)}\leq \sup_n\left\|\varphi_n\right\|_{L^2(\Gamma_n,\mu_n)}<+\infty, 
\end{equation}
the finiteness is due to the KS-weak convergence. Now let $(\varphi_{n_k})_k\subset (\varphi_n)_n$ be such that 
\[\lim_{k\to\infty}\mathcal{Q}^\alpha_{n_k}(\varphi_{n_k})=\liminf_{n\to\infty}\mathcal{Q}^\alpha_n(\varphi_n);\]
we may assume that this quantity is finite. By (\ref{E:supfinite}) there is a subsequence $(n_{k_j})_j$ of $(n_k)_k$ such that the sequence of averages $([\varphi_{n_{k_j}}]_{n_{k_j}})_j$ converges to some $\psi$ weakly in $L^2(\Gamma,\mu)$, and by the Banach-Saks theorem, \cite[Section 38]{RieszNagy56}, we may assume that its arithmetic means converge to $\psi$ strongly in $L^2(\Gamma,\mu)$. Now the KS-weak convergence implies that $\psi=\varphi$, because
\begin{align}
\left\langle\psi,\chi\right\rangle_{L^2(\Gamma,\mu)}&=\lim_{j\to\infty}\big\langle [\varphi_{n_{k_j}}]_{n_{k_j}},\chi\big\rangle_{L^2(\Gamma,\mu)}\notag\\
&=\lim_{j\to\infty}\int_\Gamma\fint_{e_{n_{k_j}}(x)}\varphi_{n_{k_j}}(w)\mathcal{H}^1(dw)E_\Gamma\chi(x)\mu(dx)\notag\\
&=\lim_{j\to\infty}\int_\Gamma\fint_{e_{n_{k_j}}(x)}\varphi_{n_{k_j}}(w)E_\Gamma\chi(w)\mathcal{H}^1(dw)\mu(dx)\notag\\
&=\lim_{j\to\infty}\big\langle \varphi_{n_{k_j}}, E_\Gamma\chi\big\rangle_{L^2(\Gamma_{n_{k_j}},\mu_{n_{k_j}})}\notag\\
&=\left\langle \varphi,\chi\right\rangle_{L^2(\Gamma,\mu)}\notag
\end{align}
for any $\chi\in \lip(\Gamma)$. Since the quadratic form $\mathcal{Q}^{\alpha,\delta}$ is continuous on $L^2(\Gamma,\mu)$, it follows that 
\begin{equation}\label{E:Cesaroconv}
\lim_{N\to\infty}\mathcal{Q}^{\alpha,\delta}\Big(\frac{1}{N}\sum_{j=1}^N [\varphi_{n_{k_j}}]_{n_{k_j}}-\varphi\Big)=0.
\end{equation}
Now 
\[\liminf_{n\to\infty}\mathcal{Q}^\alpha_n(\varphi_n)=\lim_{j\to\infty} \mathcal{Q}_{n_{k_j}}^\alpha(\varphi_{n_{k_j}})\geq \limsup_{j\to\infty}\mathcal{Q}^{\alpha,\delta}([\varphi_{n_{k_j}}]_{n_{k_j}})-4\varepsilon\sup_j\big\|[\varphi_{n_{k_j}}]_{n_{k_j}}\big\|_{L^2(\Gamma,\mu)}^2. \]
Since the left-hand side does not depend on $\varepsilon$, we arrive at
\begin{align}
\liminf_{n\to\infty}\mathcal{Q}^\alpha_n(\varphi_n)^{1/2}&\geq \limsup_{j\to\infty}\mathcal{Q}^{\alpha,\delta}\big([\varphi_{n_{k_j}}]_{n_{k_j}}\big)^{1/2}\notag\\
&\geq \limsup_{j\to\infty}\frac{1}{N}\sum_{j=1}^N\mathcal{Q}^{\alpha,\delta}\big([\varphi_{n_{k_j}}]_{n_{k_j}}\big)^{1/2}\notag\\
&\geq \lim_{j\to\infty}\mathcal{Q}^{\alpha,\delta}\Big(\frac{1}{N}\sum_{j=1}^N [\varphi_{n_{k_j}}]_{n_{k_j}}\Big)^{1/2}\notag\\
&=\mathcal{Q}^{\alpha,\delta}(\varphi)^{1/2};\notag
\end{align}
here we have used the domination of the limsup over the Ces\`aro limsup, the triangle inequality for the seminorm $(\mathcal{Q}^{\alpha,\delta})^{1/2}$ and (\ref{E:Cesaroconv}). This is (\ref{E:Qdeltaclaim}).
\end{proof}

Propositions \ref{P:limsupboundary} and \ref{P:liminfboundary} now give Theorem \ref{T:Moscoboundary}.

\section{Approximation of quasidiscs}\label{S:Disks}

For homogeneous snowflake-like curves $\Gamma\subset \mathbb{R}^2$ the natural partitions $\mathcal{I}_n$ lead to polygonal curves $\Gamma_n$ that are Jordan curves. For general quasicircles $\Gamma$ and partitions we make this an assumption on the partitions $\mathcal{I}_n$.

\begin{assumption}\label{A:Jordan}
For any large enough $n$ the polygonal curve $\Gamma_n$ is a Jordan curve.
\end{assumption}

Given an arbitrary quasicircle $\Gamma\subset \mathbb{R}^2$, we can use (\ref{E:theta}) to actually construct finite partitions $\mathcal{I}_n$ satisfying Assumption \ref{A:Jordan} by an ad hoc procedure. We do not claim any kind of optimality.

\begin{lemma}\label{L:adhoc}
Let $0<p,q<1$ and $\omega=(\omega_1,\omega_2,...)\in \{p,q\}^\mathbb{N}$. Then, for any large enough $n$ there is a finite partition $\mathcal{I}_n$ of $\Gamma$ satisfying Assumption \ref{A:basicass} with $M=5$ and Assumption \ref{A:Jordan}. Moreover, for any large enough $n$ and any $\xi\in \Gamma_n$ and $r<\omega_1\cdots \omega_n\diam\Gamma$ the set $\Gamma_n\cap \overline{B(\xi,r)}$ is connected.
\end{lemma}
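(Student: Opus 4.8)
Write $\lambda_n:=\omega_1\cdots\omega_n\diam\Gamma$ and fix a homeomorphism $\gamma$ of the circle onto $\Gamma$. The plan is to build $\mathcal{I}_n$ by a greedy inscription along $\Gamma$ and then to spend the slack allowed by $M=5$ on making the resulting polygon well behaved. Starting from a point $x_0\in\Gamma$ (to be chosen), I walk along $\Gamma$ in a fixed orientation: having placed $x_j=\gamma(t_j)$, I let $x_{j+1}=\gamma(t_{j+1})$ be the first point past $x_j$ at which the continuous, nondecreasing, initially vanishing function $r\mapsto\diam\gamma([t_j,r])$ enters a fixed target window $[\kappa\lambda_n,(\kappa+1)\lambda_n)$ with $\kappa\in[1,3]$, so that each vertex has a nondegenerate interval of admissible positions. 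Such a point exists as long as the remaining forward arc has diameter $\ge\kappa\lambda_n$, and by \eqref{E:NR} only finitely many pairwise disjoint subarcs of that diameter fit in $\Gamma$, so the process stops after finitely many steps; the short leftover arc I absorb into its neighbour, which (by subadditivity of the diameter over arcs with a common endpoint) changes its diameter by at most $\kappa\lambda_n$. The resulting $\mathcal{I}_n=\{a_{n,j}\}_j$, $a_{n,j}=a(x_j,x_{j+1})$, is a finite partition of $\Gamma$; for $n$ large the complementary subarc between consecutive vertices has diameter $\ge\diam\Gamma-3\kappa\lambda_n$, so $a(x_j,x_{j+1})$ is the forward one and, after the leftover absorption and the local adjustments below, \eqref{E:basicass} holds with $M=5$. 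I will use freely that $e_{n,j}=[x_j,x_{j+1}]$ lies in the convex hull of $a_{n,j}$, so that $|x_j-x_{j+1}|\le\diam a_{n,j}<5\lambda_n$ and $\dist(w,a_{n,j})\le\tfrac12|x_j-x_{j+1}|<\tfrac52\lambda_n$ for every $w\in e_{n,j}$, by \eqref{E:convex}.

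Next I would dispose of cyclically distant edges. If $i,j$ are at cyclic distance $\ge N$, the short subarc of $\Gamma$ joining $a_{n,i}$ to $a_{n,j}$ contains $N$ consecutive subarcs from $\mathcal{I}_n$, hence by \eqref{E:NR} has diameter at least $\lambda_n(N/c)^{1/\gamma}$, and then by the bounded turning (arc) condition \eqref{E:3point} any point of $a_{n,i}$ and any point of $a_{n,j}$ are at distance at least $S^{-1}\lambda_n(N/c)^{1/\gamma}-O(\lambda_n)$. So there is a threshold $N_0=N_0(S,c,\gamma)$, independent of $n$, with $\dist(a_{n,i},a_{n,j})\ge 10\lambda_n$, hence $\dist(e_{n,i},e_{n,j})\ge 4\lambda_n$, whenever the cyclic distance of $i$ and $j$ exceeds $N_0$. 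In particular only cyclically nearby edges can interact, both for the Jordan property and for the ball statement.

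The decisive step is to choose $x_0$ and $\kappa$, and if needed to perform finitely many local replacements of vertices inside their admissible windows, so that every block $e_{n,j-N_0},\dots,e_{n,j+N_0}$ is a simple polygonal path without ``tight folds'': consecutive edges of $\Gamma_n$ meet at interior angle $\ge\pi/2$, and any two non-adjacent edges within a block remain at distance $\ge 2\lambda_n$. This is exactly where condition \eqref{E:theta} enters: applied at scale $\asymp\kappa\lambda_n$ it prevents $\Gamma$ from bringing two cyclically nearby subarcs closer than a fixed multiple of $\lambda_n$ while they remain on ``different strands'', and the room between $\lambda_n$ and $5\lambda_n$ --- i.e.\ the freedom in $\kappa$ and in the vertex windows --- is what converts this metric statement about $\Gamma$ into the angle and separation bounds for the inscribed polygon. (For the hierarchical snowflake-type curves of \cite{Rohde01} one may instead invoke the equal-diameter inscriptions of \cite{AltBeer35,Schoenberg40,HerronMeyer12}.) Verifying this uniformly over all blocks and all large $n$ is the main obstacle of the lemma: a direct inscription at a fixed fine scale need not yield a simple polygon at all, and even when it does the natural hierarchical partitions can still produce sharp $120^\circ$-turns, which is why the construction is genuinely ad hoc and the extra room in the diameter bound is indispensable. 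Granting it, $\Gamma_n=\bigcup_j e_{n,j}$ is a finite union of segments meeting only at their common endpoints, hence a Jordan curve, which is Assumption \ref{A:Jordan}.

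Finally, for the last assertion I would fix $\xi\in e_{n,j_0}$ and $r<\lambda_n$. If $e_{n,k}$ meets $\overline{B(\xi,r)}$, pick $w$ in the intersection and points of $a_{n,k},a_{n,j_0}$ within $\tfrac52\lambda_n$ of $w$ and $\xi$; then $\dist(a_{n,k},a_{n,j_0})<r+5\lambda_n<6\lambda_n$, so by the second step $k$ lies within cyclic distance $N_0$ of $j_0$, and $\Gamma_n\cap\overline{B(\xi,r)}$ is contained in the simple polygonal sub-path $P$ through $x_{j_0-N_0},\dots,x_{j_0+N_0+1}$. To see that $P\cap\overline{B(\xi,r)}$ is connected, follow $P$ from $\xi$ in each direction: the first vertex $v$ reached is either in $\overline{B(\xi,r)}$ (so the two incident edge pieces join there) or at distance $\rho\ge r$ from $\xi$, in which case the interior-angle bound $\ge\pi/2$ puts the foot of the perpendicular from $\xi$ onto the next edge on the far side of $v$, so that edge has $v$ as its closest point to $\xi$ and stays outside $\overline{B(\xi,r)}$; all other edges of the block are at distance $\ge 2\lambda_n>r$ from $\xi$. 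Hence $P$ leaves $\overline{B(\xi,r)}$ at most once in each direction, so the intersection is a single arc, completing the plan.
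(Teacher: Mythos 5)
Your proposal does not close the lemma: everything hinges on the ``decisive step'' --- that by choosing $x_0$, $\kappa$ and finitely many vertex positions inside their windows one can make every block of $2N_0+1$ consecutive edges a simple path with interior angles $\geq\pi/2$ and with non-adjacent edges separated by $2\lambda_n$ --- and this step is exactly the content of the lemma (simplicity of $\Gamma_n$ plus the local structure needed for the ball statement), yet it is only asserted (``Granting it\dots''). The appeal to (\ref{E:theta}) ``at scale $\asymp\kappa\lambda_n$'' is never turned into an argument: (\ref{E:theta}) separates different strands of $\Gamma$ at comparable scales, but it does not by itself give an angle bound for chords of an inscribed polygon, and the claim that local vertex moves can always enforce interior angles $\geq\pi/2$ is doubtful as stated --- near spike-like features of snowflake-type quasicircles an inscribed polygon at scale $\lambda_n$ can be forced to turn sharply no matter where nearby vertices sit in their admissible windows, and nothing in the proposal shows that some admissible choice works uniformly over all blocks and all large $n$. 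Likewise the $2\lambda_n$ separation of non-adjacent edges within a block is a quantitative claim that needs proof, not an adjustment one can simply perform. (A smaller slip: with $\kappa\in[1,3]$ the absorption of the leftover arc can produce a merged arc of diameter up to $7\lambda_n$, violating $M=5$; you need $\kappa\leq 2$.)

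For comparison, the paper avoids any angle control by building the separation into the construction itself: with $r_n'=4\omega_1\cdots\omega_n\diam\Gamma$ as in (\ref{E:rnprime}), the next vertex $x_{j+1}$ is defined as the last exit point of $\Gamma$ from $B(x_j,r_n')$ before it leaves $\overline{B(x_j,r_n'/\theta)}$. Condition (\ref{E:theta}) then forces every subsequent vertex to lie outside all previously used balls $B(x_i,r_n')$, so all edges have length exactly $r_n'$, two non-adjacent edges cannot intersect because a crossing segment would have to traverse $B(x_i,r_n')\cup B(x_{i+1},r_n')$ and hence have length at least $\sqrt{3}\,r_n'>r_n'$, the closing-up near $x_0$ is handled by a short case distinction, and the connectedness of $\Gamma_n\cap\overline{B(\xi,r)}$ for $r<\omega_1\cdots\omega_n\diam\Gamma$ is immediate since such a ball can only meet one edge or two edges sharing a vertex. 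Your greedy fixed-diameter inscription plus cyclic-distance estimate via (\ref{E:NR}) and (\ref{E:3point}) is a reasonable reduction of the problem to local blocks, but without a proof of the local simplicity and fold-free structure it leaves the lemma unproved.
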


\begin{proof}
We consider $\Gamma$ oriented clockwise. We use $\theta$ as in (\ref{E:theta}) and write 
\begin{equation}\label{E:rnprime}
r_n':=4\omega_1\cdots \omega_n\diam\Gamma.
\end{equation} 
The idea of the proof is to construct a finite number of points $x_0,...,x_k\in \Gamma$ which will serve as the initial and terminal points of the arcs $a(x_j,x_{j+1})$ in the partition $\mathcal{I}_n$.

Choose some $x_0\in \Gamma$ and let $n$ be large enough to have $\Gamma\setminus B(x_0,r_n'/\theta)\neq \emptyset$. Let $x_1\in \partial B(x_0,r_n')$ be the point where $\Gamma$ exits $B(x_0,r_n')$ for the last time before exiting $\overline{B(x_0,r_n'/\theta)}$. Let $e_1$ be the line segment from $x_0$ to $x_1$.

Now suppose that $x_j\in \Gamma$ has been determined. If $x_j$ is visited by $\Gamma$ before its re-entry into $B(x_0,r'_n)$, let $x_{j+1}$ be the point on $\partial B(x_j,r_n')$ where $\Gamma$ exits $B(x_j,r_n')$ the last time before leaving $\overline{B(x_j,r_n'/\theta)}$. If $x_{j+1}$ is not in $B(x_0,r'_n)$, then let $e_{j+1}$ be the line segment from $x_j$ to $x_{j+1}$ and repeat this step with $x_{j+1}$ in place of $x_j$. After a finite number of steps we obtain a point $x_k\in \Gamma\cap B(x_0,r'_n)$. 

By (\ref{E:theta}) we have $x_{j+1}\notin \bigcup_{i=0}^j B(x_i,r_n')$ for all $j=0,...,k-2$, and we still have $x_k\notin \bigcup_{i=1}^{k-1} B(x_i,r_n')$. We have $|x_{j+1}-x_j|=r_n'$, $j=0,...,k-1$; therefore the corresponding arcs $a(x_j,x_{j+1})$ all satisfy 
\[ r_n'\leq \diam a(x_j,x_{j+1})\leq S\:r_n'\]
and the resulting line segments $e_{j+1}$, $j=0,...,k-1$, have length $r_n'$. Two neighboring line segments share an endpoint, but no other point. Line segments that do not share an endpoint cannot intersect: To intersect a line segment $e_{i+1}$ with $i<j-1$, the segment $e_{j+1}$ would have to cross the union $B(x_i,r_n')\cup B(x_{i+1},r_n')$, but this would require $e_{j+1}$ to have length at least $\sqrt{3}r_n'>r_n'$. 

It remains to discuss the situation of $x_k\in \Gamma\cap B(x_0,r'_n)$. Although some line segment $e_{i+1}$ with $i\leq k-2$ may intersect $B(x_0,r'_n)$, the point $x_k$ could never lie in the closure of the resulting disk segment cut off by $e_{i+1}$: Otherwise the maximal possible distance between $x_k$ and $e_{i+1}$ would be $r_n'-\frac12\sqrt{4r_n'^2-r_n'^2}=(1-\sqrt{3}/2)r_n'$, and since this is less than $(\sqrt{3}/2)r_n'$, the point $x_k$ would have to be an element of $B(x_i,r_n')\cup B(x_{i+1},r_n')$, which we know is impossible. Now recall that $x_k\notin B(x_1,r_n')$. In the case that $x_k\in B(x_0,r'_n)\setminus B(x_0,\frac14 r_n')$ we define $e_{k+1}$ to be the line segment from $x_k$ to $x_0$; it has length $\frac14 r_n'<\mathcal{H}^1(e_{k+1})< r_n'$, and by the preceding it cannot cross any other segment. This gives the Jordan curve $\Gamma_n:=e_1\cup e_2\cup ...\cup e_{k+1}$. In the case that $x_k\in B(x_0,\frac14 r'_n)$ we discard $x_0$ and redefine $e_1$ to be the line segment from $x_k$ to $x_1$; its length is $r_n'\leq \mathcal{H}^1(e_1)<\frac54 r_n'$, and again it cannot cross any other segment. This gives the Jordan curve $\Gamma_n:=e_1\cup e_2\cup ...\cup e_{k}$.

The last statement is immediate from the construction.
\end{proof}

Recall that the interior (bounded) planar domain enclosed by a Jordan curve is called a \emph{Jordan domain} and a domain bounded by a quasicircle is called a \emph{quasidisc}, \cite{Gehring82, Lehto87}. We write $\Omega$ to denote the interior (bounded) quasidisc bounded by $\Gamma$ and, with Assumptions \ref{A:basicass} and \ref{A:Jordan} in force, $\Omega_n$ to denote the Jordan domains bounded by the Jordan curves $\Gamma_n$, respectively. The following approximation result is easily seen.

\begin{lemma}\label{L:convcharfcts}
Let Assumptions \ref{A:basicass} and \ref{A:Jordan} be satisfied. Then we have $\lim_{n\to\infty} \mathbf{1}_{\Omega_n}=\mathbf{1}_\Omega$ in $L^p(\mathbb{R}^2)$, $p\in [1,+\infty)$.
\end{lemma}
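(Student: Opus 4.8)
The plan is to show that $\mathbf{1}_{\Omega_n}\to\mathbf{1}_\Omega$ in $L^1(\mathbb{R}^2)$, which since all functions are uniformly bounded by $1$ and supported in a fixed bounded set (all $\Gamma_n$ lie within a fixed $\varepsilon$-neighborhood of $\Gamma$ for large $n$, by Lemma \ref{L:Hausdorffconvbd}(i)) upgrades automatically to $L^p$ convergence for every $p\in[1,+\infty)$. Since $\|\mathbf{1}_{\Omega_n}-\mathbf{1}_\Omega\|_{L^1(\mathbb{R}^2)}=\mathcal{L}^2(\Omega_n\triangle\Omega)$, it suffices to prove $\mathcal{L}^2(\Omega_n\triangle\Omega)\to 0$.

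First I would observe that by Lemma \ref{L:Hausdorffconvbd}(i) we have $d_H(\Gamma_n,\Gamma)\to 0$, and I would fix any $\varepsilon>0$. The key geometric point is that, away from the boundary, the domains coincide: I claim that $\Omega\setminus(\Gamma)_\varepsilon\subset\Omega_n$ and $\Omega_n\subset\Omega\cup(\Gamma)_\varepsilon$ once $n$ is large enough that $d_H(\Gamma_n,\Gamma)<\varepsilon$ and $d_H(\Gamma_n,\Gamma)$ is smaller than, say, half the diameter of $\Omega$. To see the second inclusion, note that a point $x\in\Omega_n\setminus(\Gamma)_\varepsilon$ has $\dist(x,\Gamma)>\varepsilon>d_H(\Gamma_n,\Gamma)\geq\dist(x,\Gamma_n)$ would be contradictory unless... more carefully: any $x\notin\Omega\cup(\Gamma)_\varepsilon$ lies in the unbounded component of $\mathbb{R}^2\setminus\Gamma$ at distance $>\varepsilon$ from $\Gamma$, hence (since $\Gamma_n\subset(\Gamma)_\varepsilon$) also in the unbounded component of $\mathbb{R}^2\setminus\Gamma_n$, so $x\notin\Omega_n$; symmetrically, any $x\in\Omega$ with $\dist(x,\Gamma)>\varepsilon$ is separated from the unbounded component of $\mathbb{R}^2\setminus\Gamma_n$ by $\Gamma$ — one argues that a path from $x$ to $\infty$ must cross $\Gamma$, and near that crossing it enters $(\Gamma)_\varepsilon$, so it cannot avoid... the cleanest route is: since $\Gamma_n\subset(\Gamma)_\varepsilon$ and $\Gamma\subset(\Gamma_n)_\varepsilon$, the bounded component of $\mathbb{R}^2\setminus\Gamma_n$ contains $\Omega\setminus(\Gamma)_\varepsilon$ and is contained in $\Omega\cup(\Gamma)_{2\varepsilon}$, which follows from the Jordan curve theorem together with the fact that the connected set $\Omega\setminus(\Gamma)_\varepsilon$ (connected for $\varepsilon$ small, as $\Omega$ is a quasidisc, hence uniform) is disjoint from $\Gamma_n$ and meets the bounded complementary component of $\Gamma_n$. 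Granting this sandwiching, $\Omega_n\triangle\Omega\subset(\Gamma)_{2\varepsilon}$ for all large $n$, whence
\[
\limsup_{n\to\infty}\mathcal{L}^2(\Omega_n\triangle\Omega)\leq \mathcal{L}^2\big((\Gamma)_{2\varepsilon}\big).
\]
Finally I would let $\varepsilon\downarrow 0$ and use that $\bigcap_{\varepsilon>0}(\Gamma)_{2\varepsilon}=\Gamma$ together with $\mathcal{L}^2(\Gamma)=0$ — the latter because by Remark \ref{R:doubling}(ii) the Hausdorff dimension of $\Gamma$ is at most $s<2$, so $\mathcal{H}^s(\Gamma)<\infty$ forces $\mathcal{L}^2(\Gamma)=0$. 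By continuity of $\mathcal{L}^2$ from above on the decreasing family of compact sets $(\Gamma)_{2\varepsilon}$, this gives $\mathcal{L}^2\big((\Gamma)_{2\varepsilon}\big)\to 0$, and the lemma follows.

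The main obstacle is the topological separation step — making rigorous that Hausdorff-closeness of the Jordan curves $\Gamma_n$ to $\Gamma$ forces the enclosed Jordan domains to be $L^2$-close. This is where one genuinely uses that $\Omega$ is a quasidisc (so that $\Omega$, and $\Omega$ minus a thin collar of its boundary, is connected and "uniform"), preventing pathologies where a small Hausdorff perturbation of the boundary curve could in principle swap large regions between interior and exterior. Once the sandwiching $\Omega\setminus(\Gamma)_\varepsilon\subset\Omega_n\subset\Omega\cup(\Gamma)_{2\varepsilon}$ is in hand the rest is the routine measure-theoretic limit above.
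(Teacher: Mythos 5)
Your proposal is correct and follows essentially the same route as the paper: both reduce the claim to $\mathcal{L}^2(\Omega_n\triangle\Omega)\to 0$, obtain the sandwiching $\Omega\setminus(\Gamma)_\varepsilon\subset\Omega_n\subset\Omega\cup(\Gamma)_\varepsilon$ from the Hausdorff convergence of Lemma \ref{L:Hausdorffconvbd}(i) (the paper asserts these two inclusions just as briefly as you do), and conclude since $\mathcal{L}^2(\Gamma)=0$ by (\ref{E:sd}) makes $\mathcal{L}^2((\Gamma)_\varepsilon)$ arbitrarily small. The $L^1$-to-$L^p$ upgrade you add is immediate because the integrands are indicator functions.
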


\begin{proof}
Given $\varepsilon>0$ we have $\Gamma\subset (\Gamma_n)_\varepsilon$ and $\Gamma_n\subset (\Gamma)_\varepsilon$ for all sufficiently large $n$ by Lemma \ref{L:Hausdorffconvbd}. For such $n$ the set $\overline{\Omega}_n=\Omega_n\cup \Gamma_n$ is contained in $(\overline{\Omega})_\varepsilon$, therefore $\Omega_n\setminus \Omega\subset (\Gamma)_\varepsilon$ and 
$\mathcal{L}^2(\Omega_n\setminus \Omega)\leq\mathcal{L}^2((\Gamma)_\varepsilon)$.
The bounded domain $(\Omega^c \cup (\Gamma)_\varepsilon)^c$ enclosed by the \enquote{inner} boundary $\partial(\Gamma)_\varepsilon\cap \Omega$ of $(\Gamma)_\varepsilon$ is contained in $\Omega_n$, consequently $\Omega\setminus \Omega_n\subset (\Gamma)_\varepsilon$, and similarly as before we obtain $\mathcal{L}^2(\Omega\setminus \Omega_n)\leq \mathcal{L}^2((\Gamma)_\varepsilon)$.
Since $\mathcal{L}^2(\Gamma)=0$ by (\ref{E:sd}), the value $\mathcal{L}^2((\Gamma)_\varepsilon)$ can be made arbitrarily small. This means that 
\[\lim_{n\to\infty} \mathcal{L}^2((\Omega_n\setminus \Omega)\cup (\Omega\setminus \Omega_n))=0.\]
\end{proof}

For later purposes it is desirable to have some uniform control on the geometry of the approximating Jordan curves $\Gamma_n$.
At larger scales their behaviour with respect to condition (\ref{E:theta}) is uniformly controlled by that of $\Gamma$.
\begin{lemma}\label{L:controlatlargscales}
Let Assumptions \ref{A:basicass} and \ref{A:Jordan} be in force. Let $M$ be as in (\ref{E:basicass}), $\theta$ as in (\ref{E:theta}) and $r_n'$ as in (\ref{E:rnprime}). For any $n$, any $x\in \Gamma_n$ and any $r\geq \frac{M}{\theta}r_n'$ 
only the connected component of $\Gamma_n\cap \overline{B(x,\frac32r)}$ that contains $x$ intersects $B(x,\frac{\theta}{4}r)$.
\end{lemma}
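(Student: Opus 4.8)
The plan is to transport the bounded–turning–type condition (\ref{E:theta}) from $\Gamma$ to $\Gamma_n$ by comparing the two curves edge by edge, exploiting that they are uniformly close on the scale $r_n'$. First I would fix, for each $n$, a continuous surjection $\Phi_n\colon\Gamma\to\Gamma_n$ that is the identity on the vertex set $V_n$ and maps each subarc $a_{n,j}$ onto the corresponding edge $e_{n,j}$ (for instance, the identity read off in arclength parametrizations of $a_{n,j}$ and $e_{n,j}$ with matching endpoints); the pieces agree on shared vertices, so $\Phi_n$ is well defined, and injectivity of $\Phi_n$ will not be needed anywhere. Since $z$ and $\Phi_n(z)$ both lie in the convex hull of $a_{n,j}$ when $z\in a_{n,j}$, inequality (\ref{E:convex}) together with (\ref{E:basicass}) yields the uniform displacement bound $|\Phi_n(z)-z|\le\diam a_{n,j}<M\,\omega_1\cdots\omega_n\diam\Gamma=\tfrac{1}{4}Mr_n'$; in the regime $r\ge\tfrac{M}{\theta}r_n'$ of the statement this reads $|\Phi_n(z)-z|<\tfrac{\theta}{4}r\le\tfrac{1}{4}r$ for every $z\in\Gamma$, using $\theta\le1$.

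Next I would fix $x\in\Gamma_n$ and an arbitrary point $y\in\Gamma_n\cap B(x,\tfrac{\theta}{4}r)$, and choose preimages $\hat x,\hat y\in\Gamma$ with $\Phi_n(\hat x)=x$ and $\Phi_n(\hat y)=y$. Then $|\hat x-x|,|\hat y-y|<\tfrac{\theta}{4}r$, so the triangle inequality gives $|\hat x-\hat y|<\tfrac{3\theta}{4}r<\theta r$, and in particular $\hat y\in B(\hat x,r)$. Applying (\ref{E:theta}) at $\hat x$ with radius $r$, the point $\hat y$ lies in the same connected component $A$ of $\Gamma\cap\overline{B(\hat x,r)}$ as $\hat x$. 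The image $\Phi_n(A)$ is then a connected subset of $\Gamma_n$ containing both $x=\Phi_n(\hat x)$ and $y=\Phi_n(\hat y)$, and for $z\in A$ one has $|z-\hat x|\le r$, hence
\[|\Phi_n(z)-x|\le|\Phi_n(z)-z|+|z-\hat x|+|\hat x-x|<\tfrac{1}{4}r+r+\tfrac{1}{4}r=\tfrac{3}{2}r,\]
so that $\Phi_n(A)\subset\overline{B(x,\tfrac{3}{2}r)}$. Thus $\Phi_n(A)$ is a connected subset of $\Gamma_n\cap\overline{B(x,\tfrac{3}{2}r)}$ joining $x$ to $y$, which forces $y$ into the connected component of $\Gamma_n\cap\overline{B(x,\tfrac{3}{2}r)}$ that contains $x$. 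Since $y$ was an arbitrary point of $\Gamma_n\cap B(x,\tfrac{\theta}{4}r)$, this is exactly the assertion.

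I do not expect a genuine obstacle here: once the comparison map $\Phi_n$ is in place, everything reduces to tracking the constants $M$ and $\theta$ and the factor $4$ built into $r_n'$ (cf.\ (\ref{E:rnprime})). The one point requiring a little care is to verify that $\Phi_n$ is continuous across the vertices and obeys the edgewise displacement bound, both of which follow at once from (\ref{E:convex}) and (\ref{E:basicass}). Conceptually this lemma is the large–scale analogue of the small–scale connectedness recorded at the end of Lemma \ref{L:adhoc}: there it comes from the explicit construction of $\Gamma_n$, here it comes by comparison with $\Gamma$.
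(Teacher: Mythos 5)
Your argument is correct, and it reaches the lemma by a somewhat different route than the paper. The paper proceeds by contradiction: assuming some component of $\Gamma_n\cap\overline{B(x,\tfrac32 r)}$ other than that of $x$ meets $B(x,\tfrac{\theta}{4}r)$, it selects $z$ in that component together with a point $y\in\Gamma_n\setminus\overline{B(x,\tfrac32 r)}$ visited between $x$ and $z$, transfers the ordered triple $(x,y,z)$ to nearby points $(\xi,\eta,\zeta)$ of $\Gamma$ using $d_H(\Gamma_n,\Gamma)<\tfrac{M}{4}r_n'$ from Lemma \ref{L:Hausdorffconvbd} (i), and then contradicts (\ref{E:theta}) at $\xi$ with radius $r$. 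You run the comparison in the opposite direction and without contradiction: the arc-to-edge map $\Phi_n$ with displacement $<\tfrac{M}{4}r_n'\le\tfrac{\theta}{4}r$ (exactly the content of (\ref{E:convex}) and (\ref{E:basicass}) that underlies Lemma \ref{L:Hausdorffconvbd} (i)) pulls $x$ and $y$ back to $\hat x,\hat y\in\Gamma$, a single application of (\ref{E:theta}) places them in one component $A$ of $\Gamma\cap\overline{B(\hat x,r)}$, and pushing the connected set $A$ forward lands it in $\Gamma_n\cap\overline{B(x,\tfrac32 r)}$ and joins $x$ to $y$ there. What your version buys is that connectivity is transported wholesale, so membership of $y$ in the $x$-component is immediate; the contradiction route instead has to track that the transferred configuration genuinely separates $\zeta$ from $\xi$ inside $\Gamma\cap\overline{B(\xi,r)}$ (i.e.\ that both arcs of $\Gamma$ between them leave the ball), which is slightly more bookkeeping. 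Your constants check out: $\tfrac{M}{4}r_n'\le\tfrac{\theta}{4}r\le\tfrac14 r$, $\tfrac{3\theta}{4}r<\theta r$, and $\tfrac14 r+r+\tfrac14 r=\tfrac32 r$. One small repair: do not define $\Phi_n$ via arclength, since in the case of interest ($d>1$) the subarcs $a_{n,j}$ are not rectifiable; any homeomorphic parametrization of $a_{n,j}$ composed with an affine parametrization of $e_{n,j}$, matched at the endpoints, serves the same purpose, and your displacement bound only uses (\ref{E:convex}) and (\ref{E:basicass}), not the particular parametrization.
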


\begin{proof}
Assume that there is a connected component of $\Gamma_n\cap \overline{B(x,\frac32 r)}$ that does not contain $x$ but intersects $B(x,\frac{\theta}{4} r)$. Then there are $y\in \Gamma_n\setminus \overline{B(x,\frac32 r)}$ and $z\in \Gamma_n\cap B(x,\frac{\theta}{4} r)$ such that, following $\Gamma_n$ in a fixed orientation, $y$ is visited between visiting $x$ and $z$. 
By Lemma \ref{L:Hausdorffconvbd} (i) we have $d_H(\Gamma_n,\Gamma)<\frac{M}{4}\:r_n'$; consequently there are $\xi,\eta,\zeta\in \Gamma$ such that $\max(|x-\xi|,|y-\eta|,|z-\zeta|)<\frac{M}{4} r_n'\leq \frac{\theta}{4} r$ and $\eta$ is visited by $\Gamma$ between visiting $\xi$ and $\zeta$. But then $|\xi-\zeta|<\theta r$ and since $\overline{B(\xi,r)}\subset B(x,(\frac32-\frac{\theta}{4})r)$ also $\eta\in \Gamma\setminus \overline{B(\xi,r)}$. This contradicts (\ref{E:theta}).
\end{proof}

For the polygonal Jordan curves $\Gamma_n$ constructed in Lemma \ref{L:adhoc} a uniform control at all scales is quickly seen. 
\begin{corollary}\label{C:fullcontrol}
Let $\theta$ be as in (\ref{E:theta}), $0<p,q<1$, $\omega=(\omega_1,\omega_2,...)\in \{p,q\}^\mathbb{N}$ and let $\mathcal{I}_n$ be the finite partitions of $\Gamma$ constructed in Lemma \ref{L:adhoc}. 
\begin{enumerate}
\item[(i)] For any large enough $n$, any $x\in \Gamma_n$ and any $r>0$ only the connected component of $\Gamma_n\cap \overline{B(x,r)}$ that contains $x$ intersects $B(x,\frac{\theta}{6}r)$.
\item[(ii)] There is a constant $S'=S'(\theta)\geq 1$, depending only on $\theta$, such that for any sufficiently large $n$ and any distinct $x,y\in \Gamma_n$ we have 
$\diam a_n(x,y)\leq S'|x-y|$, where $a_n(x,y)$ denotes a subarc of $\Gamma_n$ of minimal diameter connecting $x$ and $y$. In particular, each $\Gamma_n$ is a quasicircle and each $\Omega_n$ a quasidisc.
\end{enumerate}
\end{corollary}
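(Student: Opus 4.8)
The plan is to prove part (i) first and then read off part (ii). Throughout write $\rho_n:=\omega_1\cdots\omega_n\diam\Gamma=\tfrac14 r_n'$, with $r_n'$ as in (\ref{E:rnprime}). From Lemma \ref{L:adhoc} and its proof I will use three facts about the partitions constructed there: the set $\Gamma_n\cap\overline{B(\xi,r)}$ is connected for every $\xi\in\Gamma_n$ and every $r<\rho_n$; consecutive vertices of $\Gamma_n$ lie at distance exactly $r_n'$ while any two vertices lie at distance at least $r_n'$; and $d_H(\Gamma_n,\Gamma)\le 5\rho_n$ (Lemma \ref{L:Hausdorffconvbd}(i), since $M=5$).

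For (i), fix a large $n$, a point $x\in\Gamma_n$ and $r>0$, and split according to the size of $r$. \emph{Small and moderate radii.} If $\tfrac\theta6 r<\rho_n$ — which holds in particular whenever $r<\tfrac3{2\theta}r_n'$, hence for all $r\le r_n'$ — then any $z\in\Gamma_n$ with $|z-x|<\tfrac\theta6 r$ satisfies $|z-x|<\rho_n$; choosing $r'$ with $|z-x|\le r'<\min\{\rho_n,r\}$ (possible because $\tfrac\theta6 r<\rho_n$ and $\tfrac\theta6 r<r$), the set $\Gamma_n\cap\overline{B(x,r')}$ is connected by Lemma \ref{L:adhoc}, contains $x$ and $z$, and is contained in $\Gamma_n\cap\overline{B(x,r)}$, so $z$ lies in the component of $\Gamma_n\cap\overline{B(x,r)}$ containing $x$. \emph{Large radii.} If $r\ge\tfrac{15}{2\theta}r_n'$, then $\tfrac23 r\ge\tfrac5\theta r_n'=\tfrac M\theta r_n'$, so Lemma \ref{L:controlatlargscales} applied with $\tfrac23 r$ in place of $r$ shows that only the component of $\Gamma_n\cap\overline{B(x,\tfrac32\cdot\tfrac23 r)}=\Gamma_n\cap\overline{B(x,r)}$ containing $x$ meets $B(x,\tfrac\theta4\cdot\tfrac23 r)=B(x,\tfrac\theta6 r)$.

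\emph{Intermediate radii, $\tfrac3{2\theta}r_n'\le r<\tfrac{15}{2\theta}r_n'$.} This band is the point I expect to be the main obstacle: here $\tfrac\theta6 r$ is of order $r_n'$, so $d_H(\Gamma_n,\Gamma)\asymp r_n'$ is not negligible relative to $r$ and one cannot simply transfer configurations to $\Gamma$ as in Lemma \ref{L:controlatlargscales}. The plan is to rerun the mechanism of that lemma at scale $r\asymp r_n'$, using the $r_n'$-separation of the vertices to keep everything localized: a $z\in\Gamma_n$ with $|z-x|<\tfrac\theta6 r$ is within $O(r_n')$ of $x$, and only a bounded number of edges of $\Gamma_n$ meet a ball of radius $O(r_n')$ about $x$; if $z$ lay in a component of $\Gamma_n\cap\overline{B(x,r)}$ not containing $x$, then following $\Gamma_n$ from $x$ to $z$ the shorter way would force an excursion out of $\overline{B(x,r)}$, producing a vertex $x_m$ with $|x-x_m|$ of order $r$ on the relevant subarc; transferring the triple $x,x_m,z$ to a triple on $\Gamma$ and invoking (\ref{E:theta}) for $\Gamma$ yields a contradiction. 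Making the transfer legitimate in this regime — choosing the comparison radius as a controlled multiple of $r_n'$, checking that the subarc between $x_m$ and $z$ is the short one and stays near $x$, and verifying that the resulting constants actually match the thresholds of the other two regimes — is the delicate bookkeeping; it is elementary once the geometry is arranged, but it is where the real work lies.

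Granting (i), part (ii) follows quickly. Given distinct $x,y\in\Gamma_n$, set $r:=\tfrac{12}\theta|x-y|$, so that $\tfrac\theta6 r=2|x-y|>|x-y|$ and hence $y\in\Gamma_n\cap B(x,\tfrac\theta6 r)$. By (i), $y$ belongs to the component $K$ of $\Gamma_n\cap\overline{B(x,r)}$ containing $x$; since $\Gamma_n$ is a Jordan curve and $K$ is a closed connected subset of it containing the distinct points $x$ and $y$, $K$ contains one of the two subarcs of $\Gamma_n$ joining $x$ and $y$, and that subarc has diameter at most $2r$ because it lies in $\overline{B(x,r)}$. Consequently $\diam a_n(x,y)\le 2r=\tfrac{24}\theta|x-y|$, i.e. (ii) holds with $S'=24/\theta$, which depends only on $\theta$. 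In particular each $\Gamma_n$ satisfies the bounded turning condition (\ref{E:3point}), so by the characterization of quasicircles among planar Jordan curves (\cite{Ahlfors63}; see \cite[Chapter I, Section 6.5]{Lehto87}) each $\Gamma_n$ is a quasicircle and each $\Omega_n$ a quasidisc.
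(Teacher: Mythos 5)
Your small-radius and large-radius regimes are handled correctly (the application of Lemma \ref{L:controlatlargscales} with $\tfrac23 r$ is exactly the paper's argument for $r\geq\tfrac{15}{2\theta}r_n'$), and your derivation of (ii) from (i) with $S'=24/\theta$ is clean and self-contained, where the paper instead cites \cite[2.25. Lemma]{MartioSarvas79}. But the intermediate band $\tfrac{3}{2\theta}r_n'\leq r<\tfrac{15}{2\theta}r_n'$ is precisely where you stop at a sketch, so the proof is incomplete there; and the sketched plan would not close as described. In that band $|x-z|$ may be as large as $\tfrac{\theta}{6}\cdot\tfrac{15}{2\theta}r_n'=\tfrac54 r_n'$, while transferring the configuration to $\Gamma$ costs errors of order $S r_n'$ (vertex-to-point distances up to $\tfrac54 r_n'$, arc diameters up to $S r_n'$), and at the lower end of the band $\theta r\asymp r_n'$, so these errors are \emph{not} small compared with $\theta r$: the inequality you would need from (\ref{E:theta}) (roughly $\theta(r-cSr_n')>\tfrac{\theta}{6}r+cr_n'$) fails when $r\asymp r_n'/\theta$. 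So with the constant $\tfrac{\theta}{6}$ the transfer argument genuinely breaks down in the lower part of the band, and it is doubtful that the statement with that exact constant is provable this way at all.

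The paper deals with this band much more cheaply, by settling for a smaller constant: since $\tfrac{\theta}{30}r<\tfrac14 r_n'$ whenever $r<\tfrac{15}{2\theta}r_n'$, the connectivity statement at the end of Lemma \ref{L:adhoc} already shows that no component of $\Gamma_n\cap\overline{B(x,r)}$ other than the one through $x$ can meet $B(x,\tfrac{\theta}{30}r)$ (the printed argument there has mismatched radii $\tfrac{\theta}{20}r$ versus $\tfrac{\theta}{30}r$, but this is all it yields; it does not deliver $\tfrac{\theta}{6}$ in this band either). Note that your own ``small and moderate radii'' computation proves exactly this statement for every $r<\tfrac{15}{2\theta}r_n'$: any $z\in\Gamma_n$ with $|z-x|<\min\{\rho_n,r\}$ lies in the $x$-component. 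So the gap closes immediately if you replace $\tfrac{\theta}{6}$ by $\tfrac{\theta}{30}$ in (i), which costs nothing downstream: your argument for (ii) then gives $S'=60/\theta$, still depending only on $\theta$, and Theorem \ref{T:adhoc} and all later uses only require some constant of this form. The real lesson is not to fight for the constant $\tfrac{\theta}{6}$ at scale $r\asymp r_n'/\theta$, but to weaken it there.
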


\begin{proof}
Let $r_n'$ be as in (\ref{E:rnprime}). For large enough $n$ and any $r<\frac14 r_n'$ the set $\Gamma_n\cap \overline{B(x,r)}$ is connected by Lemma \ref{L:adhoc}. The same is true for $\frac14 r_n'\leq r<\frac{15}{2\theta} r_n'$: If there would be a connected component $C_n$ of $\Gamma_n\cap \overline{B(x,r)}$ other than that through $x$, the set $C_n\cap \overline{B(x,\frac{\theta}{30}r)}$ would be a connected component of $\Gamma_n \cap \overline{B(x,\frac{\theta}{30}r)}$ not containing $x$, but since $\frac{\theta}{30} r<\frac14 r_n'$, Lemma \ref{L:adhoc} prevents this from happening. For $r\geq \frac{15}{2\theta} r_n'$ Lemma \ref{L:controlatlargscales} states that only the $x$-component of $\Gamma_n\cap \overline{B(x,r)}$ can hit $B(x,\frac{\theta}{6}r)$. This shows (i). Statement (ii) now follows from \cite[2.25. Lemma]{MartioSarvas79} and its proof.
\end{proof}

Given $\varepsilon>0$, a domain $\Omega\subset \mathbb{R}^2$ is said to be an \emph{$(\varepsilon,\infty)$-uniform domain} if for any $x,y\in \Omega$ there is a rectifiable arc $\gamma\subset \Omega$ of length $\ell(\gamma)$ connecting $x$ and $y$ such that $\ell(\gamma)\leq\frac{|x-y|}{\varepsilon}$ and $\dist(z,\partial\Omega)\geq \varepsilon\:\frac{|x-z||y-z|}{|x-y|}$ for all $z\in \gamma$, \cite{Jones81}. A number of different but equivalent definitions can be found in \cite{Vaisala88}. 

It is well-known that a Jordan domain is a quasidisc if and only if it is an $(\varepsilon,\infty)$-domain for some $\varepsilon>0$, see \cite[Theorem C]{Jones81} or \cite[2.33. Corollary]{MartioSarvas79}. Actually, both implications in this equivalence are quantitative, and we will use one of them in a quantitative fashion: Any bounded quasidisc $\Omega$  with boundary $\Gamma=\partial\Omega$ and $S$ as in (\ref{E:3point}) is an $(\varepsilon,\infty)$-domain with parameter $\varepsilon=\varepsilon(S)>0$ depending only on $S$. This can for instance be concluded by combining \cite[Theorem 6.6 and its proof]{Lehto87} with \cite[Theorems 6.2 and 6.3]{Lehto87} or \cite[2.15. Theorem]{MartioSarvas79}.

\begin{remark}
To see that any bounded $(\varepsilon,\infty)$-domain in $\mathbb{R}^2$ is a quasidisc with $S=S(\varepsilon)\geq 1$ in (\ref{E:3point}) depending only on $\varepsilon$, one can combine \cite[Theorems 6.4 and 6.5]{Lehto87}.
\end{remark}

The preceding discussion gives the following result on approximating polygonal domains.

\begin{theorem}\label{T:adhoc}
Let $\theta$ be as in (\ref{E:theta}), $0<p,q<1$, $\omega=(\omega_1,\omega_2,...)\in \{p,q\}^\mathbb{N}$. Let $\mathcal{I}_n$ be the finite partitions of $\Gamma$ constructed in Lemma \ref{L:adhoc} and for any large enough $n$, let $\Omega_n$ be the quasidisc enclosed by the polygonal Jordan curve $\Gamma_n$. There is some $\varepsilon=\varepsilon(\theta)>0$, depending only on $\theta$, such that for any large enough $n$ the quasidisc $\Omega_n$ is an $(\varepsilon,\infty)$-uniform domain.
\end{theorem}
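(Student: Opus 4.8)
The plan is to reduce everything to the uniform bounded-turning estimate already established in Corollary~\ref{C:fullcontrol}~(ii) and then to quote the quantitative form of the equivalence between quasidiscs and $(\varepsilon,\infty)$-uniform domains recalled in the paragraph preceding the theorem.

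First I would fix $n$ large enough that Lemma~\ref{L:adhoc} and Corollary~\ref{C:fullcontrol} are in force, so that $\Gamma_n$ is a polygonal Jordan curve and $\Omega_n$ the bounded Jordan domain it encloses. By Corollary~\ref{C:fullcontrol}~(ii), $\Gamma_n$ satisfies the arc condition~(\ref{E:3point}) with a constant $S'=S'(\theta)\ge 1$ that depends only on $\theta$ and \emph{not} on $n$; in particular all the $\Omega_n$ (for $n$ large) are bounded quasidiscs whose boundaries obey~(\ref{E:3point}) with one and the same constant $S'$.

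Next I would invoke the quantitative implication recorded just before the statement: a bounded quasidisc whose boundary satisfies~(\ref{E:3point}) with constant $S$ is an $(\varepsilon,\infty)$-uniform domain for some $\varepsilon=\varepsilon(S)>0$ depending only on $S$ (for instance by combining \cite[Theorem~6.6 and its proof]{Lehto87} with \cite[Theorems~6.2 and 6.3]{Lehto87}, or via \cite[2.15.~Theorem]{MartioSarvas79}). Applying this to each $\Omega_n$ with $S=S'(\theta)$ yields a single parameter $\varepsilon:=\varepsilon(S'(\theta))>0$, which depends only on $\theta$, such that every $\Omega_n$ with $n$ large is $(\varepsilon,\infty)$-uniform, as claimed.

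The only delicate point is the bookkeeping of constants: one has to make sure that the arc constant furnished by Corollary~\ref{C:fullcontrol}~(ii) really is a function of $\theta$ alone (which is exactly what that corollary asserts, ultimately via \cite[2.25.~Lemma]{MartioSarvas79}) and that the uniformity parameter $\varepsilon$ supplied by the cited quasidisc-to-uniform-domain results depends only on the arc constant and on nothing else, such as the diameter of the domain. I do not expect any genuine obstacle beyond this; the geometric heavy lifting — uniform control of $\Gamma_n$ at every scale, independently of $n$ — was already carried out in Lemmas~\ref{L:adhoc} and~\ref{L:controlatlargscales} and in Corollary~\ref{C:fullcontrol}.
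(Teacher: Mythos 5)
Your proposal is correct and follows exactly the paper's route: the theorem is stated there as an immediate consequence of the "preceding discussion", namely the uniform bounded-turning constant $S'=S'(\theta)$ from Corollary \ref{C:fullcontrol} (ii) combined with the quantitative quasidisc-to-$(\varepsilon,\infty)$-domain implication (with $\varepsilon=\varepsilon(S)$ depending only on $S$) cited from \cite{Lehto87} and \cite{MartioSarvas79}. Your bookkeeping remarks about the constants are precisely the point the paper relies on, so there is nothing to add.
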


\section{Mosco convergence of Dirichlet energy forms}\label{S:Mosco}

Suppose that Assumptions \ref{A:basicass} and \ref{A:Jordan} are satisfied. On the domains $\Omega$ and $\Omega_n$ we consider the classical Dirichlet (energy) forms
\[\mathcal{D}(u)=\int_\Omega |\nabla u|^2\:dx,\quad u\in H^1(\Omega),\] 
and
\[\mathcal{D}_n(u)=\int_{\Omega_n} |\nabla u|^2\:dx,\quad u\in H^1(\Omega_n).\]
A priori they are densely defined quadratic forms on $L^2(\Omega)$ and $L^2(\Omega_n)$, respectively. However, it is convenient to view $H^1(\Omega)$ and $H^1(\Omega_n)$ as dense subspaces of $L^2(\mathbb{R}^2)$ by saying that $u\in L^2(\mathbb{R}^2)$ 
is an element of $H^1 (\Omega)$ if $u|_\Omega$ is, and similarly for $\Omega_n$. With this agreement in mind we extend the definitions of $\mathcal{D}$ and $\mathcal{D}_n$ to all of $u\in L^2(\mathbb{R}^2)$ by setting $\mathcal{D}(u):=+\infty$ for $u\in L^2(\mathbb{R}^2)$ such that $u|_\Omega\notin H^1(\Omega)$ and $\mathcal{D}_n(u):=+\infty$ for $u\in L^2(\mathbb{R}^2)$ such that $u|_{\Omega_n}\notin H^1(\Omega_n)$.

We now assume the following.  
\begin{assumption}\label{A:uniformeps}
There is some $\varepsilon>0$ such that for large any enough $n$, the domain $\Omega_n$ is an $(\varepsilon,\infty)$-uniform domain.
\end{assumption}

Note that Assumption \ref{A:uniformeps} is satisfied for the approximating quasidiscs $\Omega_n$ in Theorem \ref{T:adhoc}. Under Assumption \ref{A:uniformeps} the convergence of the Dirichlet forms can be seen using standard arguments;  the result and its proof are similar to earlier results in \cite{CapitanelliVivaldi2011, HR-PT21, HR-PT23, LanciaVernole14}.

\begin{theorem}\label{T:Mosco}
Let Assumptions \ref{A:basicass}, \ref{A:Jordan} and \ref{A:uniformeps} be satisfied. Then we have 
\[\lim_{n\to\infty} \mathcal{D}_n=\mathcal{D}\]
in the Mosco sense on $L^2(\mathbb{R}^2)$. 
\end{theorem}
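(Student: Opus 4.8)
The plan is to verify the two Mosco conditions (M1) (liminf/weak lower semicontinuity) and (M2) (existence of a recovery sequence) directly, using the $(\varepsilon,\infty)$-uniformity to compensate for the fact that the domains $\Omega_n$ vary. Since all the forms live on the common space $L^2(\mathbb{R}^2)$ (after the extension-by-$+\infty$ convention described before the theorem), no varying-Hilbert-space machinery is needed here and one works with ordinary Mosco convergence. The key external input is that every $(\varepsilon,\infty)$-uniform domain admits a bounded Sobolev extension operator $E_n\colon H^1(\Omega_n)\to H^1(\mathbb{R}^2)$ whose operator norm is bounded by a constant $c=c(\varepsilon)$ \emph{independent of $n$} \cite{Jones81}; this uniform bound is what makes the argument work.

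For (M1), suppose $u_n\to u$ in $L^2(\mathbb{R}^2)$ with $\liminf_n \mathcal{D}_n(u_n)<\infty$; passing to a subsequence we may assume the liminf is a finite limit and that $\sup_n \|u_n\|_{H^1(\Omega_n)}<\infty$. Applying the uniform extension operators, $(E_n u_n)_n$ is bounded in $H^1(\mathbb{R}^2)$, hence has a weakly convergent subsequence with some limit $v\in H^1(\mathbb{R}^2)$. Because $\mathbf{1}_{\Omega_n}\to\mathbf{1}_\Omega$ in every $L^p$ by Lemma \ref{L:convcharfcts} and $E_n u_n\to u_n$ on $\Omega_n$, one identifies $v=u$ a.e.\ on $\Omega$, so $u|_\Omega\in H^1(\Omega)$. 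Then
\[\mathcal{D}(u)=\int_\Omega|\nabla u|^2\,dx=\lim_{n}\int_{\mathbb{R}^2}\mathbf{1}_{\Omega_n}|\nabla(E_n u_n)|^2\,dx \quad\text{(lower sc.\ of }L^2\text{-norm under weak conv., plus }\mathbf{1}_{\Omega_n}\to\mathbf{1}_\Omega\text{)}\]
gives $\mathcal{D}(u)\le\liminf_n\int_{\Omega_n}|\nabla u_n|^2\,dx=\liminf_n\mathcal{D}_n(u_n)$; the middle step uses weak lower semicontinuity of $w\mapsto\int\mathbf{1}_{\Omega}|\nabla w|^2$ together with the fact that $\mathbf{1}_{\Omega_n}\nabla(E_nu_n)\rightharpoonup \mathbf{1}_\Omega\nabla u$ (product of a strongly $L^2$-convergent uniformly bounded sequence of indicators with a weakly convergent one).

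For (M2), given $u\in L^2(\mathbb{R}^2)$ with $\mathcal{D}(u)<\infty$, i.e.\ $u|_\Omega\in H^1(\Omega)$, first use a bounded extension operator for the limit quasidisc $\Omega$ to replace $u$ by a function $\tilde u\in H^1(\mathbb{R}^2)$ with $\tilde u=u$ on $\Omega$; since the forms only see the restriction to $\Omega$ resp.\ $\Omega_n$, and since $\mathbf{1}_{\Omega_n}\to\mathbf{1}_\Omega$ in $L^2$, the constant sequence $u_n:=\tilde u$ works: it converges to $\tilde u$ in $L^2(\mathbb{R}^2)$, and $\mathcal{D}_n(u_n)=\int_{\Omega_n}|\nabla\tilde u|^2\,dx\to\int_\Omega|\nabla\tilde u|^2\,dx=\mathcal{D}(u)$ by dominated convergence driven by $\mathbf{1}_{\Omega_n}\to\mathbf{1}_\Omega$. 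One then reconciles this with the nominal limit $u$ in $L^2(\mathbb{R}^2)$: if $u\in L^2(\mathbb{R}^2)$ is prescribed on all of $\mathbb{R}^2$ and differs from $\tilde u$ off $\Omega$, a diagonal argument replacing $\tilde u$ off $\Omega_n$ by $u$ and exploiting $\mathcal{L}^2(\Omega\triangle\Omega_n)\to0$ yields a recovery sequence converging to the prescribed $u$ with $\limsup_n\mathcal{D}_n(u_n)\le\mathcal{D}(u)$.

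The main obstacle is the identification step inside (M1): showing that the weak $H^1(\mathbb{R}^2)$-limit $v$ of the extensions $E_n u_n$ agrees with the prescribed $L^2$-limit $u$ on $\Omega$, and that the gradients localize correctly so that $\mathbf{1}_{\Omega_n}\nabla(E_nu_n)$ converges weakly in $L^2$ to $\mathbf{1}_\Omega\nabla u$ rather than to $\mathbf{1}_\Omega\nabla v$ for some unrelated $v$. This is handled by testing against $C_c^\infty$ functions supported in the open set $\Omega$ (which, for fixed test function, lie in $\Omega_n$ for all large $n$ by Hausdorff convergence of the boundaries, Lemma \ref{L:Hausdorffconvbd}), so that the behaviour of the extensions outside $\Omega$ is irrelevant on those test functions; everything else is routine functional analysis plus the uniform extension bound.
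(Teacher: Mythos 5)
Your proposal is correct and follows essentially the same route as the paper: uniformly bounded Jones extension operators on the $(\varepsilon,\infty)$-domains, weak $H^1(\mathbb{R}^2)$-compactness combined with the $L^p$-convergence of the indicators $\mathbf{1}_{\Omega_n}$ from Lemma \ref{L:convcharfcts} and weak lower semicontinuity for the liminf bound, and the extension $E_\Omega u$, adjusted off $\Omega_n$, as recovery sequence. The only cosmetic differences are that the paper identifies the weak limit via a Banach--Saks argument rather than by testing against $C_c^\infty(\Omega)$-functions eventually contained in $\Omega_n$, that your displayed equality in (M1) should be the liminf inequality you state immediately afterwards, and that (M1) must be formulated for weakly convergent sequences $u_n\rightharpoonup u$ in $L^2(\mathbb{R}^2)$ --- which your argument in fact covers, since it never uses strong convergence of $(u_n)_n$.
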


We give a proof for the convenience of the reader. Since $\Omega$ is an $(\varepsilon,\infty)$-domain, there is a bounded linear extension operator $E_\Omega:H^1(\Omega)\to H^1(\mathbb{R}^2)$ by \cite[Theorem 1]{Jones81}. The same theorem together with  Assumption \ref{A:uniformeps} ensures that for all large enough $n$ there are linear extension operators $E_{\Omega_n}:H^1(\Omega_n)\to H^1(\mathbb{R}^2)$ whose operator norms are uniformly bounded with respect to $n$.

\begin{proof}
To prove the $\liminf$-condition, suppose that $(u_n)_n\subset L^2(\mathbb{R}^2)$ and $u\in L^2(\mathbb{R}^2)$ are such that $\lim_{n\to\infty} u_n=u$ weakly in $L^2(\mathbb{R}^2)$. We may assume that $L:=\liminf_{n\to\infty} \mathcal{D}_n(u_n)<+\infty$ and can find a sequence $(n_k)_k$ with $n_k\uparrow +\infty$ such that $\lim_{k\to\infty} \mathcal{D}_{n_k}(u_{n_k})=L$. Since $\sup_n\|u_n\|_{L^2(\mathbb{R}^2)}<+\infty$ by weak convergence, we find that $\sup_k\|u_{n_k}\|_{H^1(\Omega_{n_k})}<+\infty$, so that $\sup_k\|E_{\Omega_{n_k}}u_{n_k}\|_{H^1(\mathbb{R}^2)}<+\infty$ by \cite[Theorem 1]{Jones81}. Passing to subsequences if necessary, we may assume that $(E_{\Omega_{n_k}}u_{n_k})_k$ converges to some $u^\ast$ weakly in $H^1(\mathbb{R}^2)$ with convex combinations converging strongly to $u^\ast$ in $H^1(\mathbb{R}^2)$, \cite[Section 38]{RieszNagy56}. This implies that $u^\ast=u$. We may also assume that $(\nabla E_{\Omega_{n_k}}u_{n_k})_k$ converges weakly in $L^2(\mathbb{R}^2,\mathbb{R}^2)$ with convex combinations converging strongly; by the preceding its limit must be $\nabla u$. Using Lemma \ref{L:convcharfcts} we find that 
\[\lim_{k\to\infty} \mathbf{1}_{\Omega_{n_k}}\nabla u_{n_k}=\lim_{k\to\infty}\mathbf{1}_{\Omega_{n_k}}\nabla E_{\Omega_{n_k}}u_{n_k}=\mathbf{1}_\Omega \nabla u\]
weakly in $L^2(\mathbb{R}^2,\mathbb{R}^2)$ and accordingly, 
\[\mathcal{D}(u)=\int_\Omega |\nabla u|^2\:dx\leq \liminf_{k\to\infty}\int_{\Omega_{n_k}}|\nabla u_{n_k}|^2\:dx=\lim_{k\to\infty} \mathcal{D}_{n_k}(u_{n_k})=L.\]

To prove the $\limsup$-condition, suppose that $u\in L^2(\mathbb{R}^2)$. We may assume that $\mathcal{D}(u)<+\infty$. It follows that $u\in H^1(\Omega)$. Consequently $E_\Omega u$ is an element of $H^1(\mathbb{R}^2)$, and setting $u_n:=(E_\Omega u)|_{\Omega_n}$, we can use Lemma \ref{L:convcharfcts} and bounded convergence to see that  
\[\lim_{n\to\infty} u_n=u\quad \text{in $L^2(\mathbb{R}^2)$}\]
and 
\[\limsup_{n\to\infty}\mathcal{D}_n(u_n)=\lim_{n\to\infty}\int_D\mathbf{1}_{\Omega_n}|\nabla E_\Omega u|^2dx=\int_D\mathbf{1}_\Omega |\nabla E_\Omega u|^2dx=\mathcal{D}(u).\]
\end{proof}

\section{Mosco convergence of superpositions}\label{S:MoscoWentzell}

Consider the Borel measure
\[m:=\mathcal{L}^2|_\Omega+\mu.\]
The space $L^2(\overline{\Omega},m)$ is isometrically isomorphic to the orthogonal direct sum $L^2(\Omega)\oplus L^2(\Gamma,\mu)$ under the linear map $u\mapsto (u|_\Omega,u|_\Gamma)$, and we identify these spaces. In particular,
\begin{equation}\label{E:sps}
\left\langle u,v\right\rangle_{L^2(\overline{\Omega},m)}=\left\langle u|_\Omega,v|_\Omega\right\rangle_{L^2(\Omega)}+\left\langle u|_\Gamma,v|_\Gamma\right\rangle_{L^2(\Gamma,\mu)},\quad u,v\in L^2(\overline{\Omega},m).
\end{equation}
Now suppose that Assumptions \ref{A:basicass} and \ref{A:Jordan} are satisfied. Let $\Gamma_n$, $\mu_n$ and $\Omega_n$ be as before and set 
\[m_n:=\mathcal{L}^2|_{\Omega_n}+\mu_n.\]
Then analogous statements as above are true for each $L^2(\overline{\Omega}_n,m_n)$. 

Since $H^1(\Omega)$ is dense in $L^2(\Omega)$ and $\lip(\Gamma)$ is dense in $L^2(\Gamma,\mu)$, the direct sum $H^1(\Omega)\oplus \lip(\Gamma)$ is dense in $L^2(\overline{\Omega},m)$. Using Lemma \ref{L:KSconvbd}, Lemma \ref{L:convcharfcts}, bounded convergence and (\ref{E:sps}), we can conclude that the Hilbert spaces converge in the sense of \cite[Section 2.2]{KuwaeShioya03}, see Appendix \ref{S:Notions}.

\begin{lemma}\label{L:KSconv}  Let Assumptions \ref{A:basicass} and \ref{A:Jordan} be satisfied.
For any $u\in H^1(\Omega)\oplus \lip(\Gamma)$ we have 
\[\lim_{n\to \infty} \left\| (E_\Omega(u|_\Omega))|_{\Omega_n}+(E_\Gamma(u|_\Gamma))|_{\Gamma_n}\right\|_{L^2(\overline{\Omega}_n,m_n)}=\left\|u\right\|_{L^2(\overline{\Omega},m)}.\]
The sequence of Hilbert spaces $L^2(\overline{\Omega}_n,m_n)$ converges to $L^2(\overline{\Omega},m)$ with identification maps $u\mapsto (E_\Omega(u|_\Omega))|_{\Omega_n}+(E_\Gamma(u|_\Gamma))|_{\Gamma_n}$, $u\in H^1(\Omega)\oplus \lip(\Gamma)$.
\end{lemma}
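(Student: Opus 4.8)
The plan is to verify the three ingredients in the Kuwae--Shioya notion of convergence of Hilbert spaces: a dense linear subspace of the limit space, linear identification maps into the spaces $L^2(\overline{\Omega}_n,m_n)$, and convergence of the induced norms on that subspace. The first ingredient has already been recorded above ($H^1(\Omega)\oplus\lip(\Gamma)$ is dense in $L^2(\overline{\Omega},m)$) and the second is immediate, so the whole content lies in the displayed norm convergence, which I would obtain by splitting off the volume and the boundary contributions and treating them with Lemma \ref{L:convcharfcts} and Lemma \ref{L:KSconvbd} respectively.

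First I would check that for $u\in H^1(\Omega)\oplus\lip(\Gamma)$ the element $\Phi_n u:=(E_\Omega(u|_\Omega))|_{\Omega_n}+(E_\Gamma(u|_\Gamma))|_{\Gamma_n}$ genuinely lies in $L^2(\overline{\Omega}_n,m_n)=L^2(\Omega_n)\oplus L^2(\Gamma_n,\mu_n)$: the first summand is the restriction of an element of $H^1(\mathbb{R}^2)\subset L^2(\mathbb{R}^2)$, hence lies in $L^2(\Omega_n)$, and the second is the restriction of a function in $\lip_b(\mathbb{R}^2)$, hence is bounded and thus lies in $L^2(\Gamma_n,\mu_n)$ since $\mu_n$ is a probability measure; moreover $u\mapsto\Phi_n u$ is clearly linear. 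By the analogue of (\ref{E:sps}) for $m_n$,
\[\|\Phi_n u\|_{L^2(\overline{\Omega}_n,m_n)}^2=\int_{\Omega_n}|E_\Omega(u|_\Omega)|^2\,dx+\int_{\Gamma_n}|E_\Gamma(u|_\Gamma)|^2\,d\mu_n,\]
so it suffices to pass to the limit in each summand. For the volume term, set $g:=|E_\Omega(u|_\Omega)|^2\in L^1(\mathbb{R}^2)$ and $A_n:=(\Omega_n\setminus\Omega)\cup(\Omega\setminus\Omega_n)$; Lemma \ref{L:convcharfcts} gives $\mathcal{L}^2(A_n)\to0$, so the absolute continuity of the Lebesgue integral yields $\bigl|\int_{\Omega_n}g\,dx-\int_\Omega g\,dx\bigr|\leq\int_{A_n}g\,dx\to0$, and since $E_\Omega$ is an extension operator, so that $(E_\Omega(u|_\Omega))|_\Omega=u|_\Omega$, this limit equals $\|u|_\Omega\|_{L^2(\Omega)}^2$. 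For the boundary term, Lemma \ref{L:KSconvbd} applied to $\varphi=u|_\Gamma\in\lip(\Gamma)$ gives $\int_{\Gamma_n}|E_\Gamma(u|_\Gamma)|^2\,d\mu_n=\|E_\Gamma(u|_\Gamma)\|_{L^2(\Gamma_n,\mu_n)}^2\to\|u|_\Gamma\|_{L^2(\Gamma,\mu)}^2$. Adding the two limits and using (\ref{E:sps}) yields $\|\Phi_n u\|_{L^2(\overline{\Omega}_n,m_n)}\to\|u\|_{L^2(\overline{\Omega},m)}$, which is the first assertion; combined with the density of $H^1(\Omega)\oplus\lip(\Gamma)$ and the linearity of the $\Phi_n$, this gives convergence of the $L^2(\overline{\Omega}_n,m_n)$ to $L^2(\overline{\Omega},m)$ in the sense of \cite[Section 2.2]{KuwaeShioya03} (see Appendix \ref{S:Notions}) with identification maps $\Phi_n$.

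I do not expect a genuine obstacle: the argument is a routine combination of Lemmas \ref{L:convcharfcts} and \ref{L:KSconvbd} with the decomposition (\ref{E:sps}). The one point that warrants some care is the volume term, since Lemma \ref{L:convcharfcts} provides $L^1$-convergence of $\mathbf{1}_{\Omega_n}$ rather than pointwise a.e.\ convergence; the clean way around this is the symmetric-difference estimate above together with the absolute continuity of $\int|g|\,d\mathcal{L}^2$, rather than an appeal to dominated convergence.
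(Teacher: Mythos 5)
Your argument is correct and follows exactly the route the paper indicates for this lemma: decompose the norm via (\ref{E:sps}), handle the bulk term with Lemma \ref{L:convcharfcts} (your symmetric-difference estimate with absolute continuity of the integral is just a clean implementation of the paper's appeal to convergence of the indicators), and handle the boundary term with Lemma \ref{L:KSconvbd}, then invoke density of $H^1(\Omega)\oplus\lip(\Gamma)$ and linearity of the identification maps. No gaps; this matches the paper's proof in substance.
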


Given an element $u$ of $L^2(\Omega)$ or $L^2(\Omega_n)$, we write $u^\circ$ for its continuation by zero to all of $\mathbb{R}^2$. The following will be used later on.

\begin{lemma}\label{L:inherit} Let Assumptions \ref{A:basicass} and \ref{A:Jordan} be satisfied.
\begin{enumerate}
\item[(i)] Let $v\in L^2(\mathbb{R}^2)$ and let $\varphi_n\in L^2(\Gamma_n,\mu_n)$ and $\varphi\in L^2(\Gamma,\mu)$ be such that $\lim_{n\to\infty} \varphi_n=\varphi$ KS-strongly with respect to the convergence of Hilbert spaces in Lemma \ref{L:KSconvbd}. Then $\lim_{n\to\infty} (v|_{\Omega_n}+\varphi_n)=v|_\Omega+\varphi$ KS-strongly with respect to the convergence of Hilbert spaces in Lemma \ref{L:KSconv}.
\item[(ii)] Let $u_n\in L^2(\overline{\Omega}_n,m_n)$ and $u\in L^2(\overline{\Omega},m)$ be such that $\lim_{n\to\infty} u_n=u$ KS-weakly with respect to the convergence of Hilbert spaces in Lemma \ref{L:KSconv}. Then $\lim_{n\to\infty} (u_n|_{\Omega_n})^\circ=(u|_\Omega)^\circ$ weakly in $L^2(\mathbb{R}^2)$ and $\lim_{n\to\infty} u|_{\Gamma_n}=u|_\Gamma$ KS-weakly with respect to the convergence of Hilbert spaces in Lemma \ref{L:KSconvbd}.
\end{enumerate}
\end{lemma}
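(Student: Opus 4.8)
The plan is to prove (i) first and to deduce (ii) from it by testing against suitable KS-strongly convergent sequences. Throughout we use the orthogonal splitting $L^2(\overline{\Omega}_n,m_n)=L^2(\Omega_n)\oplus L^2(\Gamma_n,\mu_n)$ and (\ref{E:sps}), together with the fact that the identification maps in Lemma \ref{L:KSconv} are linear, so that KS-strong convergence is additive: if $a_n\to a$ and $b_n\to b$ KS-strongly with approximating core sequences $\tilde a_m,\tilde b_m$, then $\tilde a_m+\tilde b_m$ lies in the same core, $\|\tilde a_m+\tilde b_m-(a+b)\|\to 0$, and by the triangle inequality $\limsup_n\|\Phi_n(\tilde a_m+\tilde b_m)-(a_n+b_n)\|\le\limsup_n\|\Phi_n\tilde a_m-a_n\|+\limsup_n\|\Phi_n\tilde b_m-b_n\|$, whose limit in $m$ is $0$. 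Writing $v|_{\Omega_n}+\varphi_n=(v|_{\Omega_n},0)+(0,\varphi_n)$, it therefore suffices in (i) to show that $(v|_{\Omega_n},0)\to(v|_\Omega,0)$ and $(0,\varphi_n)\to(0,\varphi)$ KS-strongly in the sense of Lemma \ref{L:KSconv}.

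For the boundary part, by hypothesis there are $\psi_m\in\lip(\Gamma)$ with $\|\psi_m-\varphi\|_{L^2(\Gamma,\mu)}\to 0$ and $\lim_{m\to\infty}\limsup_{n\to\infty}\|(E_\Gamma\psi_m)|_{\Gamma_n}-\varphi_n\|_{L^2(\Gamma_n,\mu_n)}=0$. Using the core elements $(0,\psi_m)\in H^1(\Omega)\oplus\lip(\Gamma)$, for which the identification map of Lemma \ref{L:KSconv} gives $\Phi_n(0,\psi_m)=(0,(E_\Gamma\psi_m)|_{\Gamma_n})$, the defining condition of KS-strong convergence of $(0,\varphi_n)$ to $(0,\varphi)$ is immediate from (\ref{E:sps}). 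For the interior part, approximate $v$ in $L^2(\mathbb{R}^2)$ by $v_m\in C_c^\infty(\mathbb{R}^2)$ and use the core elements $(v_m|_\Omega,0)$, for which $\Phi_n(v_m|_\Omega,0)=((E_\Omega(v_m|_\Omega))|_{\Omega_n},0)$. Since $E_\Omega$ is a genuine extension, $E_\Omega(v_m|_\Omega)=v_m$ on $\Omega$, so
\[\big\|(E_\Omega(v_m|_\Omega))|_{\Omega_n}-v|_{\Omega_n}\big\|_{L^2(\Omega_n)}\le\Big(\int_{\Omega_n\setminus\Omega}|E_\Omega(v_m|_\Omega)-v_m|^2\,dx\Big)^{1/2}+\|v_m-v\|_{L^2(\mathbb{R}^2)};\]
by Lemma \ref{L:convcharfcts} we have $\mathcal{L}^2(\Omega_n\setminus\Omega)\to 0$, hence the first term tends to $0$ as $n\to\infty$ by absolute continuity of the integral, and letting $m\to\infty$ afterwards gives $\lim_{m\to\infty}\limsup_{n\to\infty}\|\Phi_n(v_m|_\Omega,0)-(v|_{\Omega_n},0)\|=0$. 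Adding the two parts proves (i).

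For (ii), first note $\sup_n\|u_n|_{\Omega_n}\|_{L^2(\Omega_n)}\le\sup_n\|u_n\|_{L^2(\overline{\Omega}_n,m_n)}<\infty$ and likewise for the $\Gamma_n$-components, so both claimed limits satisfy the required boundedness. To identify the weak $L^2(\mathbb{R}^2)$-limit of $(u_n|_{\Omega_n})^\circ$ it suffices, by density of $C_c^\infty(\mathbb{R}^2)$ and this uniform bound, to test against $g\in C_c^\infty(\mathbb{R}^2)$: writing $\langle(u_n|_{\Omega_n})^\circ,g\rangle_{L^2(\mathbb{R}^2)}=\langle u_n,(g|_{\Omega_n},0)\rangle_{L^2(\overline{\Omega}_n,m_n)}$ and applying (i) with $v=g$ and $\varphi_n\equiv 0$ (the zero sequence trivially converging KS-strongly to $0$), the sequence $(g|_{\Omega_n},0)$ converges KS-strongly to $(g|_\Omega,0)$, so the KS-weak convergence of $(u_n)$ gives $\langle u_n,(g|_{\Omega_n},0)\rangle\to\langle u,(g|_\Omega,0)\rangle=\int_\Omega ug\,dx$. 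Similarly, for any $\psi_n\to\psi$ KS-strongly with respect to Lemma \ref{L:KSconvbd}, part (i) with $v=0$ shows $(0,\psi_n)\to(0,\psi)$ KS-strongly with respect to Lemma \ref{L:KSconv}, so $\langle u_n|_{\Gamma_n},\psi_n\rangle_{L^2(\Gamma_n,\mu_n)}=\langle u_n,(0,\psi_n)\rangle\to\langle u,(0,\psi)\rangle=\langle u|_\Gamma,\psi\rangle_{L^2(\Gamma,\mu)}$, which together with the boundedness is exactly the KS-weak convergence $u_n|_{\Gamma_n}\to u|_\Gamma$. The only step requiring genuine care, rather than pure bookkeeping with the Kuwae--Shioya definitions, is the interior part of (i): because $v$ is merely $L^2$ and cannot serve directly as a core element, one must use that $E_\Omega$ restricted to $\Omega$ is the identity and that $\mathcal{L}^2(\Omega_n\setminus\Omega)\to 0$ to absorb the mismatch on $\Omega_n\setminus\Omega$; everything else reduces, via (\ref{E:sps}) and the linearity of the identification maps, to convergences already available.
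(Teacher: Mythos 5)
Your proof is correct and follows essentially the same route as the paper: choose core approximants for the boundary part directly from the KS-strong hypothesis and for the interior part via Lemma \ref{L:convcharfcts} to absorb the mismatch on $\Omega_n\setminus\Omega$, then deduce both claims of (ii) from (i) by testing against KS-strongly convergent sequences. The only cosmetic deviations are that you approximate $v$ globally in $L^2(\mathbb{R}^2)$ by $C_c^\infty$ functions (the paper approximates $v|_\Omega$ in $L^2(\Omega)$ by $H^1(\Omega)$ functions) and, for the weak $L^2(\mathbb{R}^2)$ convergence in (ii), test against a dense set plus a uniform bound instead of against arbitrary $w\in L^2(\mathbb{R}^2)$ directly, as the paper does.
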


\begin{proof}
Let $(\widetilde{\varphi}_m)_m\subset \lip(\Gamma)$ be such that 
\begin{equation}\label{E:bdapp}
\lim_{m\to\infty}\limsup_{n\to\infty}\|(E_\Gamma\widetilde{\varphi}_m)|_{\Gamma_n}-\varphi_n\|_{L^2(\Gamma_n,\mu_n)}=0\quad\text{and}\quad \lim_{m\to\infty} \|\widetilde{\varphi}_m-\varphi\|_{L^2(\Gamma,\mu)}=0
\end{equation}
and $(\widetilde{v}_m)_m\subset H^1(\Omega)$ such that 
\begin{equation}\label{E:bulkapp}
\lim_{m\to\infty}\|\widetilde{v}_m-v|_\Omega\|_{L^2(\Omega)}=0.
\end{equation}
Then for each $m$ we have $\widetilde{v}_m+\widetilde{\varphi}_m\in H^1(\Omega)\oplus \lip(\Gamma)$ and 
\[\lim_{m\to\infty} \|\widetilde{v}_m+\widetilde{\varphi}_m-(v|_\Omega+\varphi)\|_{L^2(\overline{\Omega},m)}=0\]
by (\ref{E:sps}). Since 
\[\|\mathbf{1}_{\Omega_n}(E_\Omega\widetilde{v}_m)-\mathbf{1}_{\Omega_n}v\|_{L^2(\mathbb{R}^2)}\leq \|(\mathbf{1}_{\Omega_n}-\mathbf{1}_\Omega)E_\Omega\widetilde{v}_m\|_{L^2(\mathbb{R}^2)}+\|(\mathbf{1}_{\Omega_n}-\mathbf{1}_{\Omega})v\|_{L^2(\mathbb{R}^2)}+\|\widetilde{v}_m-v|_\Omega\|_{L^2(\Omega)},\]
we can use (\ref{E:sps}), the subadditivity of $\limsup$, Lemma \ref{L:convcharfcts} and bounded convergence to see that 
\begin{align}
\limsup_{n\to\infty}\|(E_\Omega\widetilde{v}_m)|_{\Omega_n} &+(E_\Gamma\widetilde{\varphi}_m)|_{\Gamma_n}-(v|_{\Omega_n}+\varphi_n)\|_{L^2(\overline{\Omega}_n,m_n)}\notag\\
&\leq \limsup_{n\to\infty}\|(E_\Omega\widetilde{v}_m)|_{\Omega_n}-v|_{\Omega_n}\|_{L^2(\Omega_n)}+\limsup_{n\to\infty}\|(E_\Gamma\widetilde{\varphi}_m)|_{\Gamma_n}-\varphi_n\|_{L^2(\Gamma_n,\mu_n)}\notag\\
&\leq \|\widetilde{v}_m-v|_{\Omega}\|_{L^2(\Omega)}+\limsup_{n\to\infty}\|(E_\Gamma\widetilde{\varphi}_m)|_{\Gamma_n}-\varphi_n\|_{L^2(\Gamma_n,\mu_n)}.\notag
\end{align}
Using (\ref{E:bdapp}) and (\ref{E:bulkapp}) we arrive at 
\[\lim_{m\to\infty}\limsup_{n\to\infty}\|(E_\Omega\widetilde{v}_m)|_{\Omega_n}+(E_\Gamma\widetilde{\varphi}_m)|_{\Gamma_n}-(v|_{\Omega_n}+\varphi_n)\|_{L^2(\overline{\Omega}_n,m_n)}=0.\]
This shows (i). To see (ii), let $w\in L^2(\mathbb{R}^2)$. Then $\lim_{n\to\infty}\mathbf{1}_{\Omega_n}w=\mathbf{1}_\Omega w$ in $L^2(\mathbb{R}^2)$ by Lemma \ref{L:convcharfcts} and bounded convergence. By (i), applied with $\varphi_n= 0$ and $\varphi=0$, it follows that $\lim_{n\to\infty}w|_{\Omega_n}=w|_{\Omega}$ KS-strongly with respect to the convergence of Hilbert spaces in Lemma \ref{L:KSconv}. Consequently 
\[\lim_{n\to\infty}\big\langle (u_n|_{\Omega_n})^\circ,w\big\rangle_{L^2(\mathbb{R}^2)}=\lim_{n\to\infty}\big\langle u_n,w|_{\Omega_n} \big\rangle_{L^2(\Omega_n)}= \big\langle u,w|_{\Omega} \big\rangle_{L^2(\Omega)}= \big\langle (u|_{\Omega})^\circ,w\big\rangle_{L^2(\mathbb{R}^2)}.\]
This give the first claim in (ii). For the second, let $\varphi_n\in L^2(\Gamma_n,\mu_n)$ and $\varphi\in L^2(\Gamma,\mu)$ be such that $\lim_{n\to\infty} \varphi_n=\varphi$ KS-strongly with respect to the convergence of Hilbert spaces in Lemma \ref{L:KSconvbd}. Item (i), applied with $v=0$, gives the same convergence KS-strongly with respect to the convergence of Hilbert spaces in Lemma \ref{L:KSconv} and therefore 
\[\lim_{n\to\infty} \big\langle u_n|_{\Gamma_n},\varphi_n\big\rangle_{L^2(\Gamma_n,\mu_n)}=\big\langle u|_\Gamma,\varphi\big\rangle_{L^2(\Gamma,\mu)},\]
which shows the second claim.
\end{proof}

We choose $\alpha=1$ in (\ref{E:constellation}) and (\ref{E:constellation2}), note that by (\ref{E:sd}) this choice is admissible. For simplicity, we take $\sigma_1(x,r):=\mu(B(x,r))$ in (\ref{E:symbol}). Then 
\begin{equation}\label{E:Q1}
\mathcal{Q}^1(\varphi)=\int_\Gamma\int_\Gamma\frac{(\varphi(x)-\varphi(y))^2}{\mu(B(x,|x-y|))^2}\mu(dx)\mu(dy),\quad \varphi\in 
B_1^{2,2}(\Gamma),
\end{equation}
and 
\begin{multline}
\mathcal{Q}^1_n(\varphi)=\int_{\Gamma_n}\int_{\Gamma_n\cap B(\xi,A\:r_n)^c}\frac{(\varphi(\xi)-\varphi(\eta))^2}{\mu(B(\xi,|\xi-\eta|))\mu_n(B(\xi,|\xi-\eta|))}\mu_n(d\xi)\mu_n(d\eta)\notag\\
+\int_{\Gamma_n}\int_{\Gamma_n\cap B(\xi,A\:r_n)}\frac{(\varphi(\xi)-\varphi(\eta))^2}{|\xi-\eta|\mu_n(B(\xi,|\xi-\eta|))}\mu_n(d\xi)\mu_n(d\eta),\quad \varphi\in 
B_1^{2,2}(\Gamma_n).
\end{multline}

Composing $E_\Omega$ with (\ref{E:traceasop}) we obtain a bounded linear trace operator
\[\mathrm{Tr}_{\Omega,\Gamma}:=\mathrm{Tr}_{\Gamma}\circ E_\Omega:H^1(\Omega)\to B_1^{2,2}(\Gamma).\]
The space 
\[V(\overline{\Omega}):=\{v+\mathrm{Tr}_{\Omega,\Gamma}v:\ v\in H^1(\Omega)\}\] 
is a subspace of the vector space $H^1(\Omega)\oplus B_1^{2,2}(\Gamma)$; note that for an element $u=v+\mathrm{Tr}_{\Omega,\Gamma}v$ of $V(\overline{\Omega})$ we have $u|_\Omega=v$ and $u|_\Gamma=\mathrm{Tr}_{\Omega,\Gamma}v$. Since the space $V(\overline{\Omega})$ contains all restrictions of $\lip_c(\mathbb{R}^2)$-functions to $\overline{\Omega}$, it is dense in $L^2(\overline{\Omega},m)$ by Stone-Weierstrass. The space $\lip_c(\mathbb{R}^2)|_{\overline{\Omega}}$ of such restrictions to $\overline{\Omega}$ is a dense subspace of $V(\overline{\Omega})$; this is straightforward from the density of $\lip_c(\mathbb{R}^2)$ in $H^1(\mathbb{R}^2)$ and the continuity of the trace operator (\ref{E:traceasop}).

Now let Assumption \ref{A:uniformeps} be in force. Composition of the extension operators $E_{\Omega_n}$ with (\ref{E:traceasop}) gives linear trace operators 
\[ \mathrm{Tr}_{\Omega_n,\Gamma_n}:=\mathrm{Tr}_{\Gamma_n}\circ E_{\Omega_n}:H^1(\Omega_n)\to B_1^{2,2}(\Gamma_n).\]
Similarly as before, the space  $V(\overline{\Omega}_n):=\{(u,\mathrm{Tr}_{\Omega_n,\Gamma_n}u):\ u\in H^1(\Omega_n)\}$ is dense in $L^2(\overline{\Omega}_n,m_n)$.

We consider the quadratic forms 
\[\mathcal{E}(u):=\mathcal{D}(u|_\Omega)+\mathcal{Q}^1(u|_\Gamma),\quad u\in  V(\overline{\Omega}),\] 
and
\[\mathcal{E}_n(u):=\mathcal{D}_n(u|_{\Omega_n})+\mathcal{Q}^1_n(u|_{\Gamma_n}),\quad u\in  V(\overline{\Omega}_n);\]
they define Dirichlet forms $(\mathcal{E},V(\overline{\Omega}))$ and $(\mathcal{E}_n,V(\overline{\Omega}_n))$ on $L^2(\overline{\Omega},m)$ and $L^2(\overline{\Omega}_n,m_n)$, respectively. The Hilbert space norm $\|\cdot\|_{V(\overline{\Omega})}$ in $V(\overline{\Omega})$ is determined by
\[\|u\|_{V(\overline{\Omega})}^2= \mathcal{E}(u)+ \|u|_\Omega\|_{L^2(\Omega)}^2 +\|u|_{\Gamma}\|^{2}_{L^2(\Gamma,\mu)},\]
similarly for $V(\overline{\Omega}_n)$. We set 
\[\mathcal{E}(u):=+\infty\quad \text{for $u\in L^2(\overline{\Omega},m)\setminus V(\overline{\Omega})$}\quad \text{and }\quad \mathcal{E}_n(u):=+\infty\ \text{for $u\in L^2(\overline{\Omega}_n,m_n)\setminus V(\overline{\Omega}_n)$.}\]

We conclude the convergence of these Dirichlet forms in the KS-generalized Mosco sense.
\begin{theorem}\label{T:MoscoWentzell}
Let Assumptions \ref{A:basicass}, \ref{A:boundaryless}, \ref{A:Jordan} and \ref{A:uniformeps} be satisfied. Then we have 
\[\lim_{n\to\infty} \mathcal{E}_n=\mathcal{E}\]
in the KS-generalized Mosco sense with respect to the convergence of Hilbert spaces in Lemma \ref{L:KSconv}.
\end{theorem}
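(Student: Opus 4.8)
The plan is to verify the two conditions of KS-generalized Mosco convergence relative to the convergence of Hilbert spaces in Lemma \ref{L:KSconv}, treating the Dirichlet part via Theorem \ref{T:Mosco} and the boundary part via Theorem \ref{T:Moscoboundary} (with $\alpha=1$), and gluing the two pieces with Lemma \ref{L:inherit}. Two auxiliary facts will be used throughout. First, for $w\in H^1(\mathbb{R}^2)$ the function $w-E_\Omega(w|_\Omega)$ lies in $H^1(\mathbb{R}^2)$ and vanishes $\mathcal{L}^2$-a.e.\ on $\Omega$; since the quasidisc $\Omega$ satisfies $\mathcal{L}^2(\Omega\cap B(x,r))\gtrsim r^2$ at every $x\in\Gamma$, such a function has zero trace $B^{2,2}_1(\Gamma)$-quasi everywhere on $\Gamma$, so $\mathrm{Tr}_{\Omega,\Gamma}(w|_\Omega)=\mathrm{Tr}_\Gamma w$, and by Assumption \ref{A:uniformeps} likewise $\mathrm{Tr}_{\Omega_n,\Gamma_n}(w|_{\Omega_n})=\mathrm{Tr}_{\Gamma_n}w$; in particular $w|_{\Omega_n}+\mathrm{Tr}_{\Gamma_n}w\in V(\overline\Omega_n)$. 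Second, the measures $\mu_n$ satisfy (\ref{E:refineddoubling}) and (\ref{E:lowerboundmu}) with exponents $1$ and $s$ and constants independent of $n$ --- below scale $r_n$ this is Lemma \ref{L:mundoubling}, above it one compares $\mu_n$ with $\mu$ using the doubling of $\mu$ --- so that by \cite[Theorem 1]{Jonsson94} the trace operators $\mathrm{Tr}_{\Gamma_n}\colon H^1(\mathbb{R}^2)\to B^{2,2}_1(\Gamma_n)$ are bounded uniformly in $n$, and consequently $\|\mathrm{Tr}_{\Gamma_n}g\|_{L^2(\Gamma_n,\mu_n)}$ and $\mathcal{Q}^1_n(\mathrm{Tr}_{\Gamma_n}g)^{1/2}$ are bounded by a uniform multiple of $\|g\|_{H^1(\mathbb{R}^2)}$.

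For the $\limsup$-condition we may assume $\mathcal{E}(u)<+\infty$, i.e.\ $u\in V(\overline\Omega)$. We put $w:=E_\Omega(u|_\Omega)$, so that $w|_\Omega=u|_\Omega$ and, by the first remark, $\mathrm{Tr}_\Gamma w=u|_\Gamma$, and take $u_n:=w|_{\Omega_n}+\mathrm{Tr}_{\Gamma_n}w\in V(\overline\Omega_n)$. Then $\mathcal{D}_n(u_n|_{\Omega_n})=\int_{\mathbb{R}^2}\mathbf{1}_{\Omega_n}|\nabla w|^2\,dx\to\int_{\mathbb{R}^2}\mathbf{1}_\Omega|\nabla w|^2\,dx=\mathcal{D}(u|_\Omega)$ by Lemma \ref{L:convcharfcts} and dominated convergence, exactly as in the proof of Theorem \ref{T:Mosco}. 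For the boundary term we approximate $w$ in $H^1(\mathbb{R}^2)$ by functions $f_k\in\lip_c(\mathbb{R}^2)$: writing $\mathrm{Tr}_{\Gamma_n}w=f_k|_{\Gamma_n}+\mathrm{Tr}_{\Gamma_n}(w-f_k)$ and using the triangle inequality for $(\mathcal{Q}^1_n)^{1/2}$, the uniform trace bound to estimate $\mathcal{Q}^1_n(\mathrm{Tr}_{\Gamma_n}(w-f_k))^{1/2}\le C\|w-f_k\|_{H^1(\mathbb{R}^2)}$, Proposition \ref{P:convLip} to pass $\mathcal{Q}^1_n(f_k|_{\Gamma_n})\to\mathcal{Q}^1(f_k|_\Gamma)$, and the continuity of $\mathrm{Tr}_\Gamma$ and of $\mathcal{Q}^1$ on $B^{2,2}_1(\Gamma)$, we obtain $\limsup_n\mathcal{Q}^1_n(u_n|_{\Gamma_n})^{1/2}\le\mathcal{Q}^1(u|_\Gamma)^{1/2}$ by first sending $n\to\infty$ and then $k\to\infty$. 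The same approximation scheme together with (\ref{E:weakconveval}) gives $\mathrm{Tr}_{\Gamma_n}w\to\mathrm{Tr}_\Gamma w=u|_\Gamma$ KS-strongly relative to Lemma \ref{L:KSconvbd}, and Lemma \ref{L:inherit}(i) then yields $u_n\to u$ KS-strongly relative to Lemma \ref{L:KSconv}. Altogether $\limsup_n\mathcal{E}_n(u_n)\le\mathcal{E}(u)$ (in fact equality). If $u\notin V(\overline\Omega)$ there is nothing to prove, a KS-strongly convergent recovery sequence being produced by diagonalizing the identification maps of Lemma \ref{L:KSconv}.

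For the $\liminf$-condition, let $u_n\to u$ KS-weakly; discarding finitely many terms we may assume $u_n\in V(\overline\Omega_n)$ for all $n$ and $L:=\liminf_n\mathcal{E}_n(u_n)<+\infty$, and we pass to a subsequence realizing this $\liminf$. By Lemma \ref{L:inherit}(ii), $(u_n|_{\Omega_n})^\circ\to(u|_\Omega)^\circ$ weakly in $L^2(\mathbb{R}^2)$ and $u_n|_{\Gamma_n}\to u|_\Gamma$ KS-weakly relative to Lemma \ref{L:KSconvbd}. Since $\mathcal{D}_n(u_n|_{\Omega_n})$ and $\mathcal{Q}^1_n(u_n|_{\Gamma_n})$ are bounded, Theorem \ref{T:Mosco} (applied to $(u_n|_{\Omega_n})^\circ$, on which the form $\mathcal{D}_n$ of that theorem equals $\mathcal{D}_n(u_n|_{\Omega_n})$) gives $u|_\Omega\in H^1(\Omega)$ with $\mathcal{D}(u|_\Omega)\le\liminf_n\mathcal{D}_n(u_n|_{\Omega_n})$, and Proposition \ref{P:liminfboundary} gives $u|_\Gamma\in B^{2,2}_1(\Gamma)$ with $\mathcal{Q}^1(u|_\Gamma)\le\liminf_n\mathcal{Q}^1_n(u_n|_{\Gamma_n})$. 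It remains to identify $u$ as an element of $V(\overline\Omega)$. Using the uniformly bounded extensions $E_{\Omega_n}\colon H^1(\Omega_n)\to H^1(\mathbb{R}^2)$ (Assumption \ref{A:uniformeps} and \cite[Theorem 1]{Jones81}), the functions $w_n:=E_{\Omega_n}(u_n|_{\Omega_n})$ are bounded in $H^1(\mathbb{R}^2)$; passing to a further subsequence and to convex combinations by the Banach--Saks theorem, \cite[Section 38]{RieszNagy56}, as in the proof of Theorem \ref{T:Mosco}, we obtain $w\in H^1(\mathbb{R}^2)$ with $w_n\rightharpoonup w$ and $w|_\Omega=u|_\Omega$. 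To identify the KS-weak limit of $u_n|_{\Gamma_n}=\mathrm{Tr}_{\Gamma_n}w_n$ we test against $\chi\in\lip(\Gamma)$: writing $\mathrm{Tr}_{\Gamma_n}w_n=f_k|_{\Gamma_n}+\mathrm{Tr}_{\Gamma_n}(w_n-f_k)$ with $f_k\in\lip_c(\mathbb{R}^2)$ approximating $w$ in $H^1(\mathbb{R}^2)$, controlling the $L^2(\Gamma_n,\mu_n)$-norm of the second summand by $C\|w_n-f_k\|_{H^1(\mathbb{R}^2)}$ via the uniform trace bound, sending $n\to\infty$ in $\int_{\Gamma_n}f_k\,(E_\Gamma\chi)\,d\mu_n\to\int_\Gamma f_k\,\chi\,d\mu$ by (\ref{E:weakconveval}), and then letting $k\to\infty$ using $f_k\to w$ in $H^1(\mathbb{R}^2)$ and the continuity of $\mathrm{Tr}_\Gamma$, we find $u|_\Gamma=\mathrm{Tr}_\Gamma w$. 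By the first remark $\mathrm{Tr}_\Gamma w=\mathrm{Tr}_{\Omega,\Gamma}(w|_\Omega)=\mathrm{Tr}_{\Omega,\Gamma}(u|_\Omega)$, so $u\in V(\overline\Omega)$ and $\mathcal{E}(u)=\mathcal{D}(u|_\Omega)+\mathcal{Q}^1(u|_\Gamma)\le\liminf_n\mathcal{D}_n(u_n|_{\Omega_n})+\liminf_n\mathcal{Q}^1_n(u_n|_{\Gamma_n})\le L$.

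The part of the argument that goes beyond simply adding the conclusions of Theorems \ref{T:Mosco} and \ref{T:Moscoboundary} is the coupling through the trace in $V(\overline\Omega)$: in the $\limsup$-step the recovery sequence must lie in $V(\overline\Omega_n)$ and carry the correct boundary energy, and in the $\liminf$-step the limit must have the compatible trace $u|_\Gamma=\mathrm{Tr}_{\Omega,\Gamma}(u|_\Omega)$. Both reduce to the continuity of the boundary traces along the moving curves $\Gamma_n$, and this --- resting on the uniform boundedness of the operators $\mathrm{Tr}_{\Gamma_n}$, hence on promoting the small-scale statement of Lemma \ref{L:mundoubling} to the uniform validity of (\ref{E:refineddoubling}) for the $\mu_n$ at all scales --- is the step we expect to be the main obstacle. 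Once it is established, what remains are the density and diagonalization arguments already carried out in Proposition \ref{P:limsupboundary} and in the proof of Theorem \ref{T:Mosco}.
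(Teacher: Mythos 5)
Your overall skeleton is the paper's: split into bulk and boundary via Lemma \ref{L:inherit}, treat the Dirichlet integrals with Theorem \ref{T:Mosco} and the boundary forms with Theorem \ref{T:Moscoboundary}, and your $\liminf$ step up to the identification of the limit is exactly the printed argument. The two places where you depart from this skeleton, however, both contain genuine gaps.

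For the $\limsup$ condition your recovery sequence $u_n=w|_{\Omega_n}+\mathrm{Tr}_{\Gamma_n}w$, $w=E_\Omega(u|_\Omega)$, rests entirely on the uniform bound $\mathcal{Q}^1_n(\mathrm{Tr}_{\Gamma_n}g)^{1/2}\le C\|g\|_{H^1(\mathbb{R}^2)}$, which you assert rather than prove. Even granting that (\ref{E:refineddoubling}) and (\ref{E:lowerboundmu}) hold for the $\mu_n$ with exponents $1$ and $s$ and $n$-independent constants (you only sketch the cross-scale matching beyond Lemma \ref{L:mundoubling}), and that the constant in \cite[Theorem 1]{Jonsson94} depends only on these structure constants, you would still need to compare, uniformly in $n$, the hybrid kernel of $\mathcal{Q}^1_n$ (built from $\mu$ above scale $Ar_n$ and from $|\xi-\eta|$ below it) with the kernel $\mu_n(B(\xi,|\xi-\eta|))^{-2}$ appearing in Jonsson's norm for $(\Gamma_n,\mu_n)$; the paper only states a per-$n$ equivalence of these norms, never a uniform one. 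Such a comparison can be carried out (short range via the upper bound in Lemma \ref{L:mundoubling} together with $\mu(a_n(\xi))\le c\,r_n^{d}$ and $d\ge 1$, long range via a Lemma \ref{L:compare}-type estimate), but it is a missing piece of your argument, not a quotable fact. The paper avoids this machinery altogether: its recovery sequence comes from approximating $u$ by restrictions of $\lip_c(\mathbb{R}^2)$-functions in $V(\overline{\Omega})$, Proposition \ref{P:convLipWentzell}, and the diagonal construction of Proposition \ref{P:limsupboundary}, which requires nothing beyond pointwise convergence of the forms on Lipschitz functions.

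More seriously, in the $\liminf$ identification of the boundary component the decomposition $\mathrm{Tr}_{\Gamma_n}w_n=f_k|_{\Gamma_n}+\mathrm{Tr}_{\Gamma_n}(w_n-f_k)$ with error $C\|w_n-f_k\|_{H^1(\mathbb{R}^2)}$ does not close: $w_n\to w$ only weakly in $H^1(\mathbb{R}^2)$, so $\limsup_n\|w_n-f_k\|_{H^1(\mathbb{R}^2)}$ is merely bounded and does not become small as $k\to\infty$ (weak lower semicontinuity gives the inequality in the wrong direction). Repairing this step needs an extra compactness ingredient, for instance strong convergence of suitably cut-off $w_n$ in $H^\beta(\mathbb{R}^2)$ for some $1/2<\beta<1$ combined with $n$-uniform trace bounds $H^\beta(\mathbb{R}^2)\to L^2(\Gamma_n,\mu_n)$ (these only require uniform upper $1$-regularity of the $\mu_n$). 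Note that the paper's printed $\liminf$ argument does not perform this identification at all: it passes from $\mathcal{D}(u|_\Omega)\le\liminf_n\mathcal{D}_n(u_n|_{\Omega_n})$ and $\mathcal{Q}^1(u|_\Gamma)\le\liminf_n\mathcal{Q}^1_n(u_n|_{\Gamma_n})$ to the conclusion by superadditivity, so on this point you are attempting to prove more than the paper (membership of the limit in $V(\overline{\Omega})$, a legitimate concern since $\mathcal{E}=+\infty$ off $V(\overline{\Omega})$), but your argument for it fails as written. Your preliminary remark that $w-E_\Omega(w|_\Omega)$ has vanishing trace, via the refined Lebesgue-point property and the measure density of $(\varepsilon,\infty)$-domains, is correct, and the rest of the $\limsup$ bookkeeping (KS-strong convergence of the recovery sequence through Lemma \ref{L:inherit}(i) and (\ref{E:weakconveval})) is sound once the uniform trace bound is in place.
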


To verify Theorem \ref{T:MoscoWentzell} we use the following result, which is immediate from Proposition \ref{P:convLip}, Lemma \ref{L:convcharfcts} and bounded convergence.

\begin{proposition}\label{P:convLipWentzell}
Let Assumptions \ref{A:basicass}, \ref{A:boundaryless} and \ref{A:Jordan} be in force. Then we have 
\[\lim_{n\to \infty} \mathcal{E}_n(u)=\mathcal{E}(u),\quad u\in \lip_b(\mathbb{R}^2).\]
\end{proposition}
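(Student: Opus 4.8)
The plan is to split $\mathcal{E}_n(u)=\mathcal{D}_n(u|_{\Omega_n})+\mathcal{Q}^1_n(u|_{\Gamma_n})$ into its bulk and boundary contributions and to pass to the limit in each separately, so that $\mathcal{E}_n(u)\to\mathcal{D}(u|_\Omega)+\mathcal{Q}^1(u|_\Gamma)=\mathcal{E}(u)$.

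First I would record that for $u\in\lip_b(\mathbb{R}^2)$ the quantities $\mathcal{E}(u)$ and $\mathcal{E}_n(u)$ are finite and given by these sum formulas. Since $\Gamma_n\to\Gamma$ in the Hausdorff sense by Lemma \ref{L:Hausdorffconvbd}, there are $n_0$ and a ball $B$ with $\overline{\Omega}\cup\bigcup_{n\geq n_0}\overline{\Omega}_n\subset B$. Choosing $\chi\in\lip_c(\mathbb{R}^2)$ with $\chi\equiv 1$ on $B$, we have $u\chi\in\lip_c(\mathbb{R}^2)$ with $(u\chi)|_{\overline{\Omega}}=u|_{\overline{\Omega}}$ and $(u\chi)|_{\overline{\Omega}_n}=u|_{\overline{\Omega}_n}$ for $n\geq n_0$. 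By the density statements preceding Theorem \ref{T:MoscoWentzell} these restrictions lie in $V(\overline{\Omega})$ and $V(\overline{\Omega}_n)$ respectively, so $\mathcal{E}(u)$ and $\mathcal{E}_n(u)$ ($n\geq n_0$) are indeed sums of a Dirichlet integral and a boundary energy.

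For the bulk part, write $\mathcal{D}_n(u|_{\Omega_n})=\int_{\mathbb{R}^2}\mathbf{1}_{\Omega_n}|\nabla u|^2\,dx$. Since $u\in\lip_b(\mathbb{R}^2)$ we have $|\nabla u|^2\leq\lip(u)^2$ a.e., hence $\mathbf{1}_{\Omega_n}|\nabla u|^2\leq\lip(u)^2\mathbf{1}_B$ for $n\geq n_0$, an integrable majorant. Lemma \ref{L:convcharfcts} gives $\mathbf{1}_{\Omega_n}\to\mathbf{1}_\Omega$ in $L^1(\mathbb{R}^2)$, so $|\mathbf{1}_{\Omega_n}|\nabla u|^2-\mathbf{1}_\Omega|\nabla u|^2|\leq\lip(u)^2|\mathbf{1}_{\Omega_n}-\mathbf{1}_\Omega|\to 0$ in $L^1(\mathbb{R}^2)$, and therefore $\mathcal{D}_n(u|_{\Omega_n})\to\int_\Omega|\nabla u|^2\,dx=\mathcal{D}(u|_\Omega)$.

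For the boundary part, the choice $\alpha=1$ with $\sigma_1(x,r)=\mu(B(x,r))$ satisfies (\ref{E:symbol}), and $\tfrac12<1<1+\tfrac{2-s}{2}$ by (\ref{E:sd}), so (\ref{E:constellation2}) holds. Thus Proposition \ref{P:convLip} applies under Assumptions \ref{A:basicass} and \ref{A:boundaryless} and yields $\mathcal{Q}^1_n(u|_{\Gamma_n})=\mathcal{Q}^1_n(u)\to\mathcal{Q}^1(u)=\mathcal{Q}^1(u|_\Gamma)$. Adding the two convergences gives $\mathcal{E}_n(u)\to\mathcal{E}(u)$. There is no genuine obstacle here; the only points needing a word of care are that the $\Omega_n$ lie in a common ball, which furnishes the integrable majorant for the Dirichlet term, and that $\alpha=1$ is an admissible exponent so that Proposition \ref{P:convLip} can be invoked.
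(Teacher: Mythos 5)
Your proposal is correct and follows essentially the same route as the paper, which derives the statement directly from Proposition \ref{P:convLip} (for the boundary term, with the admissible choice $\alpha=1$, $\sigma_1(x,r)=\mu(B(x,r))$) together with Lemma \ref{L:convcharfcts} and bounded convergence (for the Dirichlet term). Your extra remarks — the common ball containing all $\Omega_n$ furnishing the integrable majorant, and the cutoff argument ensuring the restrictions lie in $V(\overline{\Omega})$ and $V(\overline{\Omega}_n)$ — are just careful spellings-out of what the paper treats as immediate.
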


We prove Theorem \ref{T:MoscoWentzell}.

\begin{proof}
To see the $\liminf$-condition, suppose that $u_n\in L^2(\overline{\Omega}_n,m_n)$ and $u\in L^2(\overline{\Omega},m)$ be such that $\lim_{n\to\infty} u_n=u$ KS-weakly with respect to the convergence of Hilbert spaces in Lemma \ref{L:KSconv}. By Lemma \ref{L:inherit} (ii) we have
$\lim_{n\to\infty} (u_n|_{\Omega_n})^\circ=(u|_\Omega)^\circ$ weakly in $L^2(\mathbb{R}^2)$ and $\lim_{n\to\infty} u_n|_{\Gamma_n}=u|_\Gamma$ KS-weakly with respect to the convergence of Hilbert spaces in Lemma \ref{L:KSconvbd}. By Theorems \ref{T:Mosco} and \ref{T:Moscoboundary} therefore $\mathcal{D}(u|_\Omega)\leq \liminf_{n\to\infty} \mathcal{D}_n(u_n|_{\Omega_n})$ and $\mathcal{Q}^1(u|_\Gamma)\leq \liminf_{n\to\infty} \mathcal{Q}^1_n(u_n|_{\Gamma_n})$, and the superadditivity of $\liminf$ gives
\[\mathcal{E}(u)\leq \liminf_{n\to\infty} \mathcal{E}_n(u_n).\]
 
To see the $\limsup$-condition, suppose that $u\in L^2(\overline{\Omega},m)$. We may assume $\mathcal{E}(u)<+\infty$, so that 
$u\in V(\overline{\Omega})$. By density we can find a sequence $(v_m)_m\subset \lip_c(\mathbb{R}^2)|_{\overline{\Omega}}$ such that 
\[\lim_{m\to\infty} \mathcal{E}(v_m)=\mathcal{E}(u)\quad \text{and}\quad \lim_{m\to\infty}\|v_m-u\|_{L^2(\overline{\Omega},m)}=0.\]
One can now use Proposition \ref{P:convLipWentzell} and analogous arguments as in the proof of Proposition \ref{P:limsupboundary} to construct a sequence $(u_m)_m$ converging to $u$  KS-strongly with respect to the convergence of Hilbert spaces in Lemma \ref{L:KSconv} and satisfying 
\[\limsup_{n\to\infty} \mathcal{E}_n(u_n)= \mathcal{E}(u).\]
\end{proof}

\section{Application to elliptic and parabolic problems}\label{S:Apps}

To fix notation we write $(\mathcal{L}^1_\Gamma,\mathcal{D}(\mathcal{L}^1_\Gamma))$ for the infinitesimal generator of $(\mathcal{Q}^1,B^{2,2}_1(\Gamma,\mu))$ as in (\ref{E:Q1}), that is, the unique non-positive self-adjoint operator on $L^2(\Gamma,\mu)$ such that 
\[\mathcal{Q}^1(\varphi,\psi)=-\left\langle \mathcal{L}^1_\Gamma \varphi,\psi\right\rangle_{L^2(\Gamma,\mu)},\quad \varphi\in \mathcal{D}(\mathcal{L}^1_\Gamma),\ \psi\in B^{2,2}_1(\Gamma,\mu).\]
It has the representation 
\[\mathcal{L}_\Gamma^1\varphi(x)=\int_\Gamma(\varphi(y)-\varphi(x))\left\lbrace \mu(B(x,|x-y|))^{-2}+\mu(B(y,|x-y|))^{-2}\right\rbrace\mu(dy),\quad \varphi\in \mathcal{D}(\mathcal{L}^1_\Gamma).\]

The infinitesimal generator of $(\mathcal{D},H^1(\Omega))$ is the Neumann Laplacian $(\mathcal{L}_\Omega,\mathcal{D}(\mathcal{L}_\Omega))$ for $\Omega$, it is the unique non-positive self-adjoint operator on $L^2(\Omega)$ satisfying 
\[\mathcal{D}(u,v)=-\left\langle \mathcal{L}_\Omega u,v\right\rangle_{L^2(\Omega)},\quad u\in \mathcal{D}(\mathcal{L}_\Omega),\quad v\in H^1(\Omega).\]

Given $\lambda>0$, $f\in L^2(\Omega)$ and $\varphi\in L^2(\Gamma,\mu)$, we call $u\in V(\overline{\Omega})$ a \emph{weak solution} to the elliptic boundary value problem
\begin{equation}\label{E:elliptic}
\begin{cases} (\lambda-\mathcal{L}_\Omega)(u|_\Omega) =f & \text{on $\Omega$},\\
(\lambda-\mathcal{L}_\Gamma^1)(u|_\Gamma) =\varphi & \text{on $\Gamma$} \end{cases}
\end{equation}
if 
\[\mathcal{E}(u,v)+\lambda\int_{\overline{\Omega}} uv\:dm=\int_\Omega fv\:dx+\int_\Gamma \varphi v\:d\mu,\quad v\in V(\overline{\Omega}).\]

Under Assumptions \ref{A:basicass} and \ref{A:Jordan} we can also consider (\ref{E:elliptic}) with $\Omega_n$, $\Gamma_n$ and $\mu_n$ in place of $\Omega$, $\Gamma$ and $\mu$. The infinitesimal generator $(\mathcal{L}^1_{\Gamma_n},\mathcal{D}(\mathcal{L}^1_{\Gamma_n}))$ of $(\mathcal{Q}^1_n,B^{2,2}_1(\Gamma_n,\mu_n))$ is of the form 
\[\mathcal{L}^1_{\Gamma_n}\varphi(\xi)=\int_{\Gamma_n}(\varphi(\eta)-\varphi(\xi))j_n(\xi,\eta)\mu_n(d\eta),\quad \varphi\in \mathcal{D}(\mathcal{L}^1_{\Gamma_n}),\]
where
\begin{multline}
j_n(\xi,\eta)=\mathbf{1}_{B(\xi,Ar_n)^c}(\eta)\left\lbrace \mu(B(\xi,|\xi-\eta|))^{-1}\mu_n(B(\xi,|\xi-\eta|))^{-1}\right.\notag\\
\left.+\mu(B(\eta,|\xi-\eta|))^{-1}\mu_n(B(\eta,|\xi-\eta|))^{-1} \right\rbrace\notag\\
+\mathbf{1}_{B(\xi,Ar_n)}(\eta)|\xi-\eta|^{-1}\left\lbrace \mu_n(B(\xi,|\xi-\eta|))^{-1}+\mu_n(B(\eta,|\xi-\eta|))^{-1}\right\rbrace.
\end{multline}
The Riesz representation theorem gives the following standard result.
\begin{lemma}
Let $\lambda>0$. Then for any $f\in L^2(\Omega)$ and $\varphi\in L^2(\Gamma,\mu)$ there is a unique weak solution $u$ to (\ref{E:elliptic}). If Assumptions \ref{A:basicass} and \ref{A:Jordan} are satisfied, then for any large enough $n$ and any 
$f\in L^2(\Omega_n)$ and $\varphi\in L^2(\Gamma_n,\mu_n)$ there is a unique weak solution $u_n$ to the corresponding problem on $\Omega_n$.
\end{lemma}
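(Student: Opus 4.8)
The plan is to recognize each boundary value problem as the Riesz representation of a bounded linear functional on an appropriate Hilbert space, so that existence and uniqueness follow from a routine Lax--Milgram/Riesz argument once the spaces and forms are set up correctly.

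First I would recall that $(\mathcal{E},V(\overline{\Omega}))$ is a symmetric, closed Dirichlet form on $L^2(\overline{\Omega},m)$, so that $V(\overline{\Omega})$ equipped with $\|\cdot\|_{V(\overline{\Omega})}$ is a Hilbert space, and by (\ref{E:sps}) one has $\|u\|_{V(\overline{\Omega})}^2=\mathcal{E}(u)+\|u\|_{L^2(\overline{\Omega},m)}^2$. For fixed $\lambda>0$ put $\mathcal{E}_\lambda(u,v):=\mathcal{E}(u,v)+\lambda\langle u,v\rangle_{L^2(\overline{\Omega},m)}$ for $u,v\in V(\overline{\Omega})$. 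Then $\mathcal{E}_\lambda$ is a symmetric bilinear form on $V(\overline{\Omega})$ which is coercive, since $\mathcal{E}\geq 0$ gives $\mathcal{E}_\lambda(u,u)=\mathcal{E}(u)+\lambda\|u\|_{L^2(\overline{\Omega},m)}^2\geq \min\{1,\lambda\}\,\|u\|_{V(\overline{\Omega})}^2$, and bounded, since Cauchy--Schwarz (applied to $\mathcal{E}$ and to $\langle\cdot,\cdot\rangle_{L^2(\overline{\Omega},m)}$) yields $\mathcal{E}_\lambda(u,v)\leq \max\{1,\lambda\}\,\|u\|_{V(\overline{\Omega})}\|v\|_{V(\overline{\Omega})}$. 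Hence $\mathcal{E}_\lambda$ is an inner product on $V(\overline{\Omega})$ equivalent to the given one.

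Next I would observe that the prescribed data define a bounded linear functional on $V(\overline{\Omega})$. Indeed, by (\ref{E:sps}) the inclusion $V(\overline{\Omega})\hookrightarrow L^2(\overline{\Omega},m)=L^2(\Omega)\oplus L^2(\Gamma,\mu)$ is norm non-increasing, so for $(f,\varphi)\in L^2(\Omega)\oplus L^2(\Gamma,\mu)$ the map
\[v\longmapsto \int_\Omega fv\,dx+\int_\Gamma \varphi v\,d\mu=\big\langle (f,\varphi),(v|_\Omega,v|_\Gamma)\big\rangle_{L^2(\Omega)\oplus L^2(\Gamma,\mu)}\]
is linear with $\big|\int_\Omega fv\,dx+\int_\Gamma\varphi v\,d\mu\big|\leq \big(\|f\|_{L^2(\Omega)}^2+\|\varphi\|_{L^2(\Gamma,\mu)}^2\big)^{1/2}\|v\|_{V(\overline{\Omega})}$. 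The Riesz representation theorem, applied in the Hilbert space $(V(\overline{\Omega}),\mathcal{E}_\lambda)$, then produces a unique $u\in V(\overline{\Omega})$ with $\mathcal{E}_\lambda(u,v)=\int_\Omega fv\,dx+\int_\Gamma\varphi v\,d\mu$ for all $v\in V(\overline{\Omega})$; this is precisely the weak solution to (\ref{E:elliptic}), and uniqueness is built into the representation theorem.

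Finally, for the approximating domains I would run the identical argument with $\Omega_n,\Gamma_n,\mu_n,m_n,V(\overline{\Omega}_n),\mathcal{E}_n$ in place of their unsubscripted counterparts: for $n$ large enough $\Gamma_n$ is a Jordan curve by Assumption \ref{A:Jordan}, so the relevant trace operator, the space $V(\overline{\Omega}_n)$ and the closed Dirichlet form $(\mathcal{E}_n,V(\overline{\Omega}_n))$ on $L^2(\overline{\Omega}_n,m_n)=L^2(\Omega_n)\oplus L^2(\Gamma_n,\mu_n)$ are available, and the same coercivity and boundedness estimates together with Riesz representation give a unique weak solution $u_n$. I do not expect a genuine obstacle: the only points needing a line of care are that $\|\cdot\|_{V(\overline{\Omega})}^2$ equals $\mathcal{E}+\|\cdot\|_{L^2(\overline{\Omega},m)}^2$ so that coercivity holds for every $\lambda>0$, and that the functional is bounded via the continuous embedding into $L^2(\overline{\Omega},m)$ — both immediate from (\ref{E:sps}).
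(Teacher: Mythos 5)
Your proposal is correct and matches the paper's own argument, which simply invokes the Riesz representation theorem for the coercive form $\mathcal{E}_\lambda$ on the Hilbert space $V(\overline{\Omega})$ (respectively $V(\overline{\Omega}_n)$, available for large $n$ by Assumption \ref{A:Jordan}). Your spelled-out coercivity, boundedness and functional estimates are exactly the routine details the paper leaves implicit.
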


\begin{remark}
The requirement to have $n$ large enough stems from Assumption \ref{A:Jordan}, which was formulated that way and makes sure the $\Gamma_n$ are boundaries of domains $\Omega_n$. 
\end{remark}

Theorem \ref{T:MoscoWentzell}, \cite[Theorem 2.4]{KuwaeShioya03}, Lemma \ref{L:inherit} (i) and Lemma \ref{L:traceconv} below give the next stability result.
\begin{theorem}\label{T:ellipticstability}
Suppose that Assumptions \ref{A:basicass}, \ref{A:boundaryless}, \ref{A:Jordan} and \ref{A:uniformeps} are in force. Let
$\lambda>0$, $f\in L^2(\mathbb{R}^2)$ and $g\in \lip_b(\mathbb{R}^2)$, and let $u$ and $u_n$ denote the unique weak solutions to (\ref{E:elliptic}) on $\Omega$ and $\Omega_n$ with $\mathrm{Tr}_\Gamma g$ and $\mathrm{Tr}_{\Gamma_n} g$ in place of $\varphi$, respectively. Then we have 
\[\lim_{n\to\infty} u_n=u\]
KS-strongly with respect to the convergence of Hilbert spaces in Lemma \ref{L:KSconv}.
\end{theorem}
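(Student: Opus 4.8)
The plan is to recast the two elliptic problems as resolvent equations in the varying Hilbert spaces $L^2(\overline{\Omega},m)$ and $L^2(\overline{\Omega}_n,m_n)$, to check that the right-hand sides converge KS-strongly, and then to invoke the KS-generalized Mosco convergence from Theorem~\ref{T:MoscoWentzell} together with \cite[Theorem 2.4]{KuwaeShioya03}, which upgrades it to strong convergence of the associated resolvents.

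First I would set up the resolvent picture. Write $(\mathcal{A},\mathcal{D}(\mathcal{A}))$ for the non-positive self-adjoint generator of the Dirichlet form $(\mathcal{E},V(\overline{\Omega}))$ on $L^2(\overline{\Omega},m)$ and $(\mathcal{A}_n,\mathcal{D}(\mathcal{A}_n))$ for that of $(\mathcal{E}_n,V(\overline{\Omega}_n))$ on $L^2(\overline{\Omega}_n,m_n)$. By (\ref{E:sps}) the functional $v\mapsto\int_\Omega fv\,dx+\int_\Gamma(\mathrm{Tr}_\Gamma g)\,v\,d\mu$ equals $\langle F,v\rangle_{L^2(\overline{\Omega},m)}$, where $F:=f|_\Omega+\mathrm{Tr}_\Gamma g\in L^2(\overline{\Omega},m)$, and similarly the right-hand side of the problem on $\Omega_n$ is $\langle F_n,\cdot\rangle_{L^2(\overline{\Omega}_n,m_n)}$ with $F_n:=f|_{\Omega_n}+\mathrm{Tr}_{\Gamma_n} g\in L^2(\overline{\Omega}_n,m_n)$. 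By the definition of a weak solution to (\ref{E:elliptic}) and the definition of the generator, the (unique) weak solutions are then $u=(\lambda-\mathcal{A})^{-1}F$ and $u_n=(\lambda-\mathcal{A}_n)^{-1}F_n$; here $\lambda>0$ and the non-negativity of the closed forms $\mathcal{E}$, $\mathcal{E}_n$ ensure that $\lambda-\mathcal{A}$ and $\lambda-\mathcal{A}_n$ are boundedly invertible, consistent with the preceding existence and uniqueness statement.

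The second step is to verify that $F_n\to F$ KS-strongly with respect to the convergence of Hilbert spaces in Lemma~\ref{L:KSconv}. Since $g\in\lip_b(\mathbb{R}^2)$, Lemma~\ref{L:traceconv} yields $\mathrm{Tr}_{\Gamma_n} g\to\mathrm{Tr}_\Gamma g$ KS-strongly with respect to the convergence of Hilbert spaces in Lemma~\ref{L:KSconvbd}. Applying Lemma~\ref{L:inherit}~(i) with $v:=f\in L^2(\mathbb{R}^2)$, $\varphi_n:=\mathrm{Tr}_{\Gamma_n} g$ and $\varphi:=\mathrm{Tr}_\Gamma g$ then gives $F_n=f|_{\Omega_n}+\mathrm{Tr}_{\Gamma_n} g\to f|_\Omega+\mathrm{Tr}_\Gamma g=F$ KS-strongly with respect to the convergence of Hilbert spaces in Lemma~\ref{L:KSconv}. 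Finally, Theorem~\ref{T:MoscoWentzell} states that $\mathcal{E}_n\to\mathcal{E}$ in the KS-generalized Mosco sense, so by \cite[Theorem 2.4]{KuwaeShioya03} the resolvents converge strongly: $F_n\to F$ KS-strongly implies $(\lambda-\mathcal{A}_n)^{-1}F_n\to(\lambda-\mathcal{A})^{-1}F$ KS-strongly. Combining the two steps gives $u_n=(\lambda-\mathcal{A}_n)^{-1}F_n\to(\lambda-\mathcal{A})^{-1}F=u$ KS-strongly, which is the assertion.

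I do not expect a serious analytic obstacle here: all the substance is already packaged in Theorem~\ref{T:MoscoWentzell}, in \cite[Theorem 2.4]{KuwaeShioya03}, and in the trace convergence Lemma~\ref{L:traceconv}. The main care needed is bookkeeping: correctly identifying each weak solution with the resolvent applied to the appropriate element of the varying Hilbert space via (\ref{E:sps}), and keeping track of which notion of Hilbert space convergence (that of Lemma~\ref{L:KSconvbd} on the boundaries versus that of Lemma~\ref{L:KSconv} on the closed domains) each KS-strong convergence statement refers to, which is precisely what Lemma~\ref{L:inherit}~(i) is designed to mediate.
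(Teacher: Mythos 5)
Your proposal is correct and follows exactly the route the paper takes: it identifies the weak solutions as resolvents applied to $F=f|_\Omega+\mathrm{Tr}_\Gamma g$ and $F_n=f|_{\Omega_n}+\mathrm{Tr}_{\Gamma_n}g$, obtains the KS-strong convergence $F_n\to F$ from Lemma \ref{L:traceconv} and Lemma \ref{L:inherit}~(i), and then invokes Theorem \ref{T:MoscoWentzell} together with \cite[Theorem 2.4]{KuwaeShioya03} to pass to strong resolvent convergence. These are precisely the ingredients the paper cites for this theorem, so there is nothing to add.
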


To see Theorem \ref{T:ellipticstability} we can use the following consequence of Lemma \ref{L:Hausdorffconvbd} (i) and the fact that the Whitney extension operator $E_\Gamma$ preserves the Lipschitz constant, \cite[Chapter VI, Theorem 3]{Stein70}.
\begin{lemma}\label{L:traceconv} 
Suppose that Assumptions \ref{A:basicass} and \ref{A:boundaryless} are in force and let $g\in \lip_b(\mathbb{R}^2)$. Then $\lim_{n\to\infty} \mathrm{Tr}_{\Gamma_n} g=\mathrm{Tr}_\Gamma g$ KS-strongly with respect to the convergence of Hilbert spaces in Lemma \ref{L:KSconvbd}.
\end{lemma}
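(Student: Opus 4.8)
The plan is to reduce everything to the elementary fact that the trace of a continuous function is its restriction, and then to exploit the Hausdorff convergence $\Gamma_n\to\Gamma$ together with a single Lipschitz bound.

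First I would observe that since $g\in\lip_b(\mathbb{R}^2)$ is continuous, $\fint_{B(x,r)}g\,dy\to g(x)$ as $r\to 0$ for every $x$, so $\mathrm{Tr}_\Gamma g=g|_\Gamma$ and likewise $\mathrm{Tr}_{\Gamma_n}g=g|_{\Gamma_n}$. Moreover $g|_\Gamma$ is Lipschitz on $\Gamma$, hence $g|_\Gamma\in\lip(\Gamma)$, which is precisely the core used to define the convergence of Hilbert spaces in Lemma~\ref{L:KSconvbd}. To prove KS-strong convergence it therefore suffices, by the definition of strong convergence recalled in Appendix~\ref{S:Notions}, to exhibit a sequence in $\lip(\Gamma)$ approximating $g|_\Gamma$ in $L^2(\Gamma,\mu)$ whose images under the identification maps approximate $g|_{\Gamma_n}$ in $L^2(\Gamma_n,\mu_n)$; I would simply take the constant sequence $\varphi_m:=g|_\Gamma$. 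The $L^2(\Gamma,\mu)$-approximation is then trivial, and the task reduces to showing
\[\lim_{n\to\infty}\bigl\|(E_\Gamma(g|_\Gamma))|_{\Gamma_n}-g|_{\Gamma_n}\bigr\|_{L^2(\Gamma_n,\mu_n)}=0.\]

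Set $h:=E_\Gamma(g|_\Gamma)\in\lip_b(\mathbb{R}^2)$. Since $E_\Gamma$ is an extension operator, $h|_\Gamma=g|_\Gamma$, so $h$ and $g$ agree on $\Gamma$, and since the Whitney extension does not increase the Lipschitz constant, \cite[Chapter VI, Theorem 3]{Stein70}, both $g$ and $h$ are $L$-Lipschitz on $\mathbb{R}^2$ with $L:=\lip(g)$. For $\xi\in\Gamma_n$ choose $x\in\Gamma$ with $|\xi-x|\le d_H(\Gamma_n,\Gamma)$; then
\[|h(\xi)-g(\xi)|\le|h(\xi)-h(x)|+|h(x)-g(x)|+|g(x)-g(\xi)|\le 2L\,d_H(\Gamma_n,\Gamma),\]
so $\|h-g\|_{L^\infty(\Gamma_n)}\le 2L\,d_H(\Gamma_n,\Gamma)$. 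Because each $\mu_n$ is a probability measure, this yields $\|h|_{\Gamma_n}-g|_{\Gamma_n}\|_{L^2(\Gamma_n,\mu_n)}\le 2L\,d_H(\Gamma_n,\Gamma)$, and the right-hand side tends to zero by Lemma~\ref{L:Hausdorffconvbd}~(i) together with Assumption~\ref{A:basicass}, since $\omega_1\cdots\omega_n\le\max(p,q)^n\to 0$. This gives the displayed limit and hence the claim.

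There is no substantial obstacle here; the only points requiring a little care are the admissibility check that $g|_\Gamma$ lies in the core $\lip(\Gamma)$, and the use of the fact that $E_\Gamma$ preserves the Lipschitz constant, so that $g$ and its extension are controlled by one and the same constant on the shrinking neighbourhoods of $\Gamma$ containing the $\Gamma_n$.
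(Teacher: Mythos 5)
Your argument is correct and is essentially the paper's own (only sketched) proof: reduce to the restriction $g|_\Gamma$, use the constant approximating sequence in the core $\lip(\Gamma)$, and control $\|(E_\Gamma(g|_\Gamma))|_{\Gamma_n}-g|_{\Gamma_n}\|_{L^2(\Gamma_n,\mu_n)}$ by the sup-norm bound $2\lip(g)\,d_H(\Gamma_n,\Gamma)\to 0$, exactly the combination of Lemma \ref{L:Hausdorffconvbd}~(i) and the Lipschitz-preserving property of the Whitney extension invoked in the paper. The only cosmetic caveat is that Stein's extension preserves the Lipschitz constant up to a dimensional factor, which changes nothing in the estimate.
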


In a similar manner related Cauchy problems can be studied. Let $(\mathcal{L},\mathcal{D}(\mathcal{L}))$ be the infinitesimal generator of $(\mathcal{E},V(\overline{\Omega}))$; it is the superposition of $\mathcal{L}_\Omega$ and $\mathcal{L}_\Gamma^1$. Given $\lambda>0$ and $u_0\in L^2(\overline{\Omega},m)$, we call a function $u:[0,+\infty)\to L^2(\overline{\Omega},m)$ a \emph{solution} to the Cauchy problem 
\begin{equation}\label{E:parabolic}
\begin{cases} \frac{du}{dt}(t)&=(\mathcal{L}-\lambda)u(t), \quad t>0,\\
u(0)&=u_0\end{cases}
\end{equation}
if $u$ is an element of $C^1((0,+\infty),L^2(\overline{\Omega},m))\cap C([0,+\infty),L^2(\overline{\Omega},m))$, $u(t)\in \mathcal{D}(\mathcal{L})$ for all $t>0$ and (\ref{E:parabolic}) holds. Under Assumptions \ref{A:basicass} and \ref{A:Jordan} we can again consider the analogous problem on $\Omega_n$, we use similar notation.

Existence and uniqueness of solutions are clear by semigroup theory.
\begin{lemma}
Let $\lambda>0$. Then for any $u_0\in L^2(\overline{\Omega},m)$ there is a unique solution $u$ to (\ref{E:parabolic}). If Assumptions \ref{A:basicass} and \ref{A:Jordan} are satisfied, then for any large enough $n$ and any $u_{0,n}\in L^2(\overline{\Omega}_n,m_n)$ there is a unique solution $u_n$ to the corresponding problem on $\Omega_n$.
\end{lemma}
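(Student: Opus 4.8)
The plan is to deduce both statements from standard linear semigroup theory, since (\ref{E:parabolic}) is an abstract Cauchy problem governed by the infinitesimal generator of a Dirichlet form.

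First I would record that, by the construction in Section \ref{S:MoscoWentzell}, $(\mathcal{E}, V(\overline{\Omega}))$ is a Dirichlet form on $L^2(\overline{\Omega}, m)$; in particular it is densely defined, closed, symmetric and nonnegative, so its generator $(\mathcal{L}, \mathcal{D}(\mathcal{L}))$ is a nonpositive self-adjoint operator. Consequently $\mathcal{L} - \lambda$ is self-adjoint with $\sup \sigma(\mathcal{L} - \lambda) \leq -\lambda < 0$, and by the spectral theorem it generates a strongly continuous, analytic contraction semigroup $(T_t)_{t\geq 0}$ on $L^2(\overline{\Omega}, m)$, with $\|T_t\| \leq e^{-\lambda t}$.

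Next I would set $u(t) := T_t u_0$ and verify the three requirements in the definition of a solution: strong continuity of $(T_t)$ gives $u \in C([0,+\infty), L^2(\overline{\Omega}, m))$ with $u(0) = u_0$; the smoothing property of an analytic semigroup gives $u(t) \in \mathcal{D}(\mathcal{L})$ for every $t > 0$; and differentiating $t \mapsto T_t u_0$ on $(0,+\infty)$ gives $u \in C^1((0,+\infty), L^2(\overline{\Omega}, m))$ with $\frac{du}{dt}(t) = (\mathcal{L}-\lambda)u(t)$, so that $u$ solves (\ref{E:parabolic}). For uniqueness I would take the difference $w = u - v$ of two solutions and compute, for $t>0$, $\frac{d}{dt}\|w(t)\|^2_{L^2(\overline{\Omega},m)} = 2\langle (\mathcal{L}-\lambda)w(t), w(t)\rangle_{L^2(\overline{\Omega},m)} = -2\mathcal{E}(w(t)) - 2\lambda\|w(t)\|^2_{L^2(\overline{\Omega},m)} \leq 0$, since $\mathcal{E} \geq 0$ and $\lambda > 0$; as $\|w(t)\|^2_{L^2(\overline{\Omega},m)} \to 0$ for $t \downarrow 0$ and $t \mapsto \|w(t)\|^2_{L^2(\overline{\Omega},m)}$ is nonincreasing and nonnegative, it follows that $w \equiv 0$, hence $u = v$.

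The statement for $\Omega_n$ follows by the identical argument: for $n$ large enough Assumption \ref{A:Jordan} ensures that $\Gamma_n = \partial\Omega_n$ with $\Omega_n$ a Jordan (indeed quasi-) domain, and $(\mathcal{E}_n, V(\overline{\Omega}_n))$ is then a Dirichlet form on $L^2(\overline{\Omega}_n, m_n)$ whose generator is nonpositive self-adjoint; running the reasoning above with this generator, $\mathcal{E}_n$, $L^2(\overline{\Omega}_n, m_n)$ and $u_{0,n}$ in place of $\mathcal{L}$, $\mathcal{E}$, $L^2(\overline{\Omega}, m)$ and $u_0$ yields existence and uniqueness of $u_n$. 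I do not expect a genuine obstacle here; the only point that requires more than one line is the verification that the abstract mild solution $T_t u_0$ has the pointwise-in-$t$ regularity demanded in the definition, and that is exactly the analyticity of the semigroup generated by a self-adjoint operator bounded above.
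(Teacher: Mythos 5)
Your proposal is correct and follows exactly the route the paper takes: the paper disposes of this lemma with the single remark that existence and uniqueness are clear by semigroup theory, which is precisely the argument you spell out (nonpositive self-adjoint generator of the Dirichlet form, analytic contraction semigroup generated by $\mathcal{L}-\lambda$, and the standard energy/monotonicity argument for uniqueness). Your write-up simply makes explicit the details the paper leaves implicit, including the same treatment of the approximating problems on $\Omega_n$ for large $n$ under Assumption \ref{A:Jordan}.
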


Stability now follows from Theorem \ref{T:MoscoWentzell} and \cite[Theorem 2.4]{KuwaeShioya03}.
\begin{theorem}\label{T:parabolicstability}
Suppose that Assumptions \ref{A:basicass}, \ref{A:boundaryless}, \ref{A:Jordan} and \ref{A:uniformeps} are in force. Let
$\lambda>0$ and let $u_0\in L^2(\overline{\Omega},m)$  and $u_{0,n}\in L^2(\overline{\Omega}_n,m_n)$ be such that $\lim_{n\to\infty} u_{0,n}=u_0$ KS-strongly with respect to the convergence of Hilbert spaces in Lemma \ref{L:KSconv}. Let $u$ and $u_n$ denote the unique solutions to (\ref{E:elliptic}) on $\Omega$ and $\Omega_n$ with initial conditions $u_0$ and $u_{0,n}$, respectively. Then for any $t>0$ we have 
\[\lim_{n\to\infty} u_n(t)=u(t)\]
KS-strongly with respect to the convergence of Hilbert spaces in Lemma \ref{L:KSconv}.
\end{theorem}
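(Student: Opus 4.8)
The plan is to deduce Theorem \ref{T:parabolicstability} from the KS-generalized Mosco convergence $\mathcal{E}_n\to\mathcal{E}$ established in Theorem \ref{T:MoscoWentzell}, together with the abstract convergence theorem \cite[Theorem 2.4]{KuwaeShioya03}, which says that Mosco convergence of the forms along a convergent sequence of Hilbert spaces is equivalent to strong convergence of the associated strongly continuous contraction semigroups. First I would recall the semigroup setup: the infinitesimal generator $(\mathcal{L},\mathcal{D}(\mathcal{L}))$ of $(\mathcal{E},V(\overline{\Omega}))$ generates an analytic contraction semigroup $(T_t)_{t\geq0}$ on $L^2(\overline{\Omega},m)$, and likewise $(\mathcal{L}_n,\mathcal{D}(\mathcal{L}_n))$ on each $L^2(\overline{\Omega}_n,m_n)$ generates $(T_t^{(n)})_{t\geq0}$; since the shift by $\lambda$ is harmless, the solution to \eqref{E:parabolic} is simply $u(t)=e^{-\lambda t}T_tu_0$, and analogously $u_n(t)=e^{-\lambda t}T_t^{(n)}u_{0,n}$. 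Existence and uniqueness are exactly the preceding lemma.

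Next I would invoke Theorem \ref{T:MoscoWentzell}, which gives $\lim_{n\to\infty}\mathcal{E}_n=\mathcal{E}$ in the KS-generalized Mosco sense with respect to the convergence of Hilbert spaces in Lemma \ref{L:KSconv}. By \cite[Theorem 2.4]{KuwaeShioya03}, this is equivalent to the statement that for every $t>0$ and every sequence $(w_n)_n$ with $w_n\in L^2(\overline{\Omega}_n,m_n)$ converging KS-strongly to some $w\in L^2(\overline{\Omega},m)$, one has $\lim_{n\to\infty}T_t^{(n)}w_n=T_tw$ KS-strongly. Applying this with $w_n=u_{0,n}$ and $w=u_0$, which converge KS-strongly by hypothesis, yields $\lim_{n\to\infty}T_t^{(n)}u_{0,n}=T_tu_0$ KS-strongly for each fixed $t>0$. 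Multiplying by the scalar $e^{-\lambda t}$ preserves KS-strong convergence, so $\lim_{n\to\infty}u_n(t)=u(t)$ KS-strongly for each $t>0$, which is the claim.

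There is essentially no serious obstacle here, since all the real work has already been carried out: the heavy lifting is Theorem \ref{T:MoscoWentzell} (itself built on Theorems \ref{T:Mosco} and \ref{T:Moscoboundary} and Lemma \ref{L:inherit}), and \cite[Theorem 2.4]{KuwaeShioya03} is an off-the-shelf equivalence. The only points that require a moment's care are bookkeeping ones: checking that the resolvent shift $\lambda>0$ enters the semigroup merely as the factor $e^{-\lambda t}$ and does not interfere with the Mosco-to-semigroup translation (one may equivalently absorb $\lambda$ into the form by passing to $\mathcal{E}+\lambda\langle\cdot,\cdot\rangle_{L^2(\overline{\Omega},m)}$, whose Mosco convergence is immediate from that of $\mathcal{E}_n$ and Lemma \ref{L:KSconv}), and confirming that the notion of solution used in \eqref{E:parabolic}—an element of $C^1((0,\infty),L^2)\cap C([0,\infty),L^2)$ valued in $\mathcal{D}(\mathcal{L})$ for $t>0$—coincides with the orbit of the analytic semigroup generated by $\mathcal{L}-\lambda$, which it does by standard analytic semigroup theory. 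With these remarks the proof is a two-line citation, and I would present it as such.

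\begin{proof}
By semigroup theory the unique solutions to \eqref{E:parabolic} on $\overline{\Omega}$ and $\overline{\Omega}_n$ are $u(t)=e^{-\lambda t}T_tu_0$ and $u_n(t)=e^{-\lambda t}T_t^{(n)}u_{0,n}$, where $(T_t)_{t\geq0}$ and $(T_t^{(n)})_{t\geq0}$ are the strongly continuous contraction semigroups on $L^2(\overline{\Omega},m)$ and $L^2(\overline{\Omega}_n,m_n)$ associated with the Dirichlet forms $(\mathcal{E},V(\overline{\Omega}))$ and $(\mathcal{E}_n,V(\overline{\Omega}_n))$, respectively. By Theorem \ref{T:MoscoWentzell} we have $\lim_{n\to\infty}\mathcal{E}_n=\mathcal{E}$ in the KS-generalized Mosco sense with respect to the convergence of Hilbert spaces in Lemma \ref{L:KSconv}. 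By \cite[Theorem 2.4]{KuwaeShioya03} this is equivalent to the strong convergence of the associated semigroups, that is, for every $t>0$ and every sequence $w_n\in L^2(\overline{\Omega}_n,m_n)$ converging KS-strongly to $w\in L^2(\overline{\Omega},m)$ one has $\lim_{n\to\infty}T_t^{(n)}w_n=T_tw$ KS-strongly with respect to the convergence of Hilbert spaces in Lemma \ref{L:KSconv}. Applying this with $w_n=u_{0,n}$ and $w=u_0$, which converge KS-strongly by hypothesis, we obtain $\lim_{n\to\infty}T_t^{(n)}u_{0,n}=T_tu_0$ KS-strongly for every $t>0$. Since multiplication by the scalar $e^{-\lambda t}$ preserves KS-strong convergence, it follows that $\lim_{n\to\infty}u_n(t)=u(t)$ KS-strongly for every $t>0$.
\end{proof}
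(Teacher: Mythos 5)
Your proposal is correct and follows exactly the paper's argument: the result is obtained by combining the KS-generalized Mosco convergence of Theorem \ref{T:MoscoWentzell} with the equivalence to strong semigroup convergence from \cite[Theorem 2.4]{KuwaeShioya03}, the solution being $u(t)=e^{-\lambda t}T_tu_0$ so that the shift by $\lambda$ only contributes a harmless scalar factor. Your write-up merely spells out the bookkeeping (the semigroup representation and the $e^{-\lambda t}$ factor) that the paper leaves implicit.
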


\appendix

\section{Notions of convergence}\label{S:Notions}

For the convenience of the reader we briefly recall some basic notions from \cite{KuwaeShioya03} in a form adapted to our text. 

Let $H$ and $H_1, H_2, ...$ be separable Hilbert spaces, $\mathcal{C}$ a dense subspace of $H$ and $\Phi_n:\mathcal{C}\to H_n$ linear maps. If 
\begin{equation}\label{E:KSdef}
\lim_{n\to\infty} \|\Phi_n u\|_{H_n}=\|u\|_H,\quad u\in \mathcal{C},
\end{equation}
then we say that the sequence \emph{$(H_n)_n$ converges to $H$ with identification maps $\Phi_n$, $n\geq 1$}. 

Suppose now that this is the case. A sequence $(u_n)_n$ with $u_n\in H_n$ is said to \emph{converge KS-strongly to $u\in H$}
with respect to (\ref{E:KSdef}) if there is a sequence $(\widetilde{u}_m)_m\subset \mathcal{C}$ such that $\lim_{m\to\infty} \widetilde{u}_m=u$ in $H$ and 
\[\lim_{m\to \infty}\limsup_{n\to\infty} \|\Phi_n \widetilde{u}_m-u_n\|_{H_n}=0,\]
see \cite[Definition 2.4]{KuwaeShioya03}. A sequence $(u_n)_n$ with $u_n\in H_n$ is said to \emph{converge KS-weakly to $u\in H$} with respect to (\ref{E:KSdef}) if 
\[\lim_{n\to\infty}\left\langle u_n,v_n\right\rangle_{H_n}=\left\langle u,v\right\rangle_H\]
for any sequence $(v_n)_n$ with $v_n\in H_n$ and any $v\in H$ such $\lim_{n\to\infty} v_n=v$ KS-strongly with respect to (\ref{E:KSdef}), see \cite[Definition 2.4]{KuwaeShioya03}.

Suppose that $Q:H\to [0,+\infty]$ and $Q_n:H_n\to [0,+\infty]$ are quadratic forms. We say that the sequence $(Q_n)_n$ \emph{converges to $Q$ in the KS-generalized Mosco sense} with respect to (\ref{E:KSdef}) if 
\begin{enumerate}
\item[(i)] for any sequence $(u_n)_n$ with $u_n\in H_n$ and any $u\in H$  such that $\lim_{n\to\infty} u_n=u$ KS-weakly with respect to (\ref{E:KSdef}), we have 
\[Q(u)\leq \liminf_{n\to\infty} Q_n(u_n),\]
\item[(ii)] for any $u\in H$ there is a sequence $(u_n)_n$ with $u_n\in H_n$ such that $\lim_{n\to\infty} u_n=u$ KS-strongly and 
\[Q(u)\geq \limsup_{n\to\infty} Q_n(u_n).\]
\end{enumerate}
See \cite[Definition 2.11]{KuwaeShioya03}. As in the case of Mosco convergence on a single Hilbert space, \cite[Definition 2.1.1, Theorem 2.4.1 and Corollary 2.6.1]{Mosco94}, the KS-generalized Mosco convergence is equivalent to the strong convergence of resolvent and semigroup operators, \cite[Theorem 2.4.1]{KuwaeShioya03}.

\vspace{1cm}

{\bf Acknowledgements.} S. C. and M. R. L. have been supported by the Gruppo Nazionale per l'Analisi Matematica, la Probabilit\`a e le loro Applicazioni (GNAMPA) of the Istituto Nazionale di Alta Matematica (INdAM). M. H. has been supported in part by the DFG IRTG 2235 \enquote{Searching for the regular in the irregular:
Analysis of singular and random systems} and by the DFG CRC 1283, \enquote{Taming uncertainty and profiting from randomness
and low regularity in analysis, stochastics and their applications}.

\end{document}